\newtheorem{lem}{Lemma}
\newtheorem{thm}[lem]{Theorem}
\newtheorem{prop}[lem]{Proposition}
\newtheorem{cor}[lem]{Corollary}
\newtheorem{construct}[lem]{Construction}
\theoremstyle{definition}
\newtheorem{clm}{Claim}
\newtheorem{clmA}{Claim}
\newtheorem{conj}[lem]{Conjecture}
\newtheorem{defn}[lem]{Definition}
\newtheorem{myques}[lem]{Question}
\newtheorem*{mainthm}{Main Theorem}
\newtheorem*{planarthm}{Planar Theorem}
\newtheorem*{dthm}{$\bm{d}$-Degenerate Theorem}
\newtheorem{thmA}{Theorem}
\newtheorem{ques}[thmA]{Question}
\newcommand\floor[1]{\lfloor#1\rfloor}
\newcommand\ceil[1]{\lceil#1\rceil}
\renewcommand\geq\geqslant
\renewcommand\ge\geqslant
\renewcommand\leq\leqslant
\renewcommand\le\leqslant
\newcommand\vph\varphi
\def\aftermath{\par\vspace{-\belowdisplayskip}\vspace{-\parskip}\vspace{-\baselineskip}}
\newenvironment{clmproof}[1]{\par\noindent\underline{Proof.}\space#1}{\leavevmode\unskip\penalty9999\hbox{}\nobreak\hfill\quad\hbox{$\diamondsuit$}\smallskip}
\tikzstyle{uStyle}=[shape = circle, minimum size = 2pt, inner sep =2.0pt, outer sep = 0pt, draw, fill=white]
\tikzstyle{ubStyle}=[shape = circle, minimum size = 3pt, inner sep =1.25pt, outer sep = 0pt, draw, fill=white]
\tikzstyle{xStyle}=[shape = rectangle, minimum size = 2pt, inner sep =2.5pt, outer sep = 0pt, draw, fill=white]
\tikzstyle{yStyle}=[shape = diamond, minimum size = 2pt, inner sep =1.8pt, outer sep = 0pt, draw, fill=white]
\definecolor{mygray}{RGB}{192,192,192}
\definecolor{myothergray}{RGB}{128,128,128}
\colorlet{mygrayer}{mygray!80!white}
\colorlet{myothergrayer}{myothergray!75!white}
\newcommand{\stalactite}[3]{
\begin{scope}[yscale=.866]
\draw[thick] (#1,0) -- (#1+1,0) -- (#1+2,0) -- (#1+3,0) (#1,0) -- (#1+1.5,1) -- (#1+1,0) (#1+2,0) -- (#1+1.5,1) -- (#1+3,0) (#1,0) -- (#1
+0.5,-1) -- (#1+1,0) -- (#1+1.5,-1) -- (#1+2,0) -- (#1+2.5,-1) -- (#1+3,0);
\foreach \i/\j/\c in {#1+1.5/1/1, #1+0.5/-1/1, #1+1.5/-1/1, #1+2.5/-1/1, #1/0/#2, #1+2/0/#2, #1+1/0/#3, #1+3/0/#3}
\draw[thick] (\i,\j) node[ubStyle] {\tiny{\bf\c}};
\end{scope}
}
\def\mgray{gray!65!white}
\def\drawthick{\draw[line width=.8mm]}
\title{Equitably Coloring Planar and Outerplanar Graphs}
\author{Daniel W. Cranston\thanks{Department of Mathematics, William \& Mary, Williamsburg, VA, USA; \texttt{dcransto@gmail.com}}
\and Reem Mahmoud\thanks{Division of Science, NYU Abu Dhabi, Abu Dhabi, UAE; \texttt{rm7230@nyu.edu}}
}
\date{}
\begin{document}
\maketitle
\abstract{A proper $s$-coloring of an $n$-vertex graph is \emph{equitable} if every color class has size $\floor{n/s}$ or $\ceil{n/s}$.
A necessary condition to have an equitable $s$-coloring is that every vertex $v$ appears in an independent set of size at least  $\floor{n/s}$.
That is $\min_{v\in V(G)}\alpha_v\ge \floor{n/s}$.  Various authors showed that when $G$ is a tree and $s\ge 3$ this obvious necessary condition
is also sufficient.  Kierstead, Kostochka, and Xiang asked whether this result holds more generally for all outerplanar graphs.
We show that the answer is No when $s=3$, but that the answer is Yes when $s\ge 6$.  The case $s\in\{4,5\}$ remains open.
We also prove an analogous result for planar graphs, with a necessary and sufficient hypothesis.
Fix $s\ge 40$.  Let $G$ be a planar graph, and let $w_0,w_1$ be its $2$ vertices with largest degrees. 
If there exist disjoint independent sets $I_0, I_1$ such that $|I_0|=\floor{n/s}$ and $|I_1| = \floor{(n+1)/s}$ and $w_0,w_1\in I_0\cup I_1$, then
$G$ has an equitable $s$-coloring.  
}

\section{Introduction}
A proper coloring of an $n$-vertex graph with colors $1,\ldots,s$ is \emph{equitable} if each color class has size $\floor{n/s}$ or $\ceil{n/s}$.
Equitable coloring was introduced in 1968 by Gr\"{u}nbaum~\cite{grunbaum}.  
His paper studied a conjecture of Erd\H{o}s on partitioning cliques;
Gr\"{u}nbaum recast, and slightly generalized, this problem in the language of equitable colorings.  
A few years later Hajnal and Szemer\'{e}di~\cite{HS} confirmed this conjecture of Gr\"{u}nbaum, along with its precursor posed by Erd\H{o}s.

\begin{thmA}[\cite{HS}]
	\label{HS-thm}
	If $G$ is a graph with $\Delta(G)<s$, then $G$ has an equitable $s$-coloring.
\end{thmA}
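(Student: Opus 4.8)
The plan is to prove the theorem by a vertex-moving (``cascade'') argument in the spirit of Hajnal and Szemer\'edi, in the streamlined form later given by Kierstead and Kostochka. Write $s-1\ge\Delta(G)$; we want a proper $s$-coloring all of whose classes have size $\floor{n/s}$ or $\ceil{n/s}$. \textbf{First I would reduce to the case $s\mid n$.} Write $n=qs+t$ with $0\le t<s$; if $t>0$, form $G'$ from $G$ by adding a disjoint clique $K_{s-t}$. Then $|V(G')|=(q+1)s$ and $\Delta(G')=\max(\Delta(G),s-t-1)\le s-1$, so an equitable $s$-coloring of $G'$ has every class of size exactly $q+1$; the $s-t$ vertices of the new clique receive $s-t$ distinct colors, and deleting them leaves a proper $s$-coloring of $G$ with $t$ classes of size $q+1$ and $s-t$ classes of size $q$, which is equitable since $\ceil{n/s}=q+1$ and exactly $t$ classes should have that size. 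So from now on $n=qs$, and the goal is to partition $V(G)$ into $s$ independent sets of size exactly $q$.

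\textbf{Next I would set up an induction on $n$} (in steps of $s$); the base case $q=0$ is trivial. Pick any vertex $v$. Applying the induction hypothesis to $G-v$ (which has $qs-1$ vertices and still maximum degree at most $s-1$) gives an equitable $s$-coloring of $G-v$ whose classes have size $q$ ($s-1$ of them) and size $q-1$ (exactly one, call it $A$). It now suffices to recolor $G-v$ and then place $v$ into a deficient class containing no neighbor of $v$: that is, to produce a proper $s$-coloring of $G-v$ with one class of size $q-1$ avoiding $N(v)$ and all other classes of size $q$. If $v$ already has no neighbor in $A$ we are done, so assume otherwise.

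\textbf{The combinatorial engine is the notion of movable vertices and accessible classes.} For color classes $B,C$, call $u\in B$ \emph{$(B\!\to\!C)$-movable} if $u$ has no neighbor in $C$; recoloring such $u$ with $C$'s color keeps the coloring proper while shifting one unit of size from $B$ to $C$. Call a class $C$ \emph{accessible} if a finite sequence of such moves reaches a proper $s$-coloring in which $C$ is the unique deficient (size $q-1$) class and every other class has size $q$ --- informally, the ``hole'' in $A$ can be pushed to $C$. Let $\mathcal A$ be the set of accessible classes (it contains $A$), and call a class \emph{terminal} if it contains no neighbor of $v$. If some terminal class is accessible, push the hole there and put $v$ into it --- done. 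So assume no accessible class is terminal; then $v$ has a neighbor in each of the $|\mathcal A|$ accessible classes, so $|\mathcal A|\le\deg v\le s-1$, and the family $\mathcal B$ of inaccessible classes is nonempty.

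\textbf{The hard part will be deriving a contradiction from the existence of inaccessible classes}; this is exactly the core of the Hajnal--Szemer\'edi argument, and everything above is bookkeeping around it. The key structural fact is that inaccessibility forces many edges: if $C\in\mathcal B$ then no vertex of any class in $\mathcal A$ is $(\cdot\!\to\!C)$-movable, so every vertex of every accessible class has a neighbor in $C$; and a more careful cascading analysis (ordering the inaccessible classes and tracking how a hypothetical successful move would propagate) shows that vertices of the inaccessible classes in turn have many neighbors inside $\bigcup\mathcal B$. Double-counting the edges between $\bigcup\mathcal A$ and the classes of $\mathcal B$, and iterating over $\mathcal B$ in the right order, then forces some vertex to have degree at least $s$, contradicting $\Delta(G)\le s-1$. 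This completes the induction, and with the reduction of the first paragraph, the proof. I expect the precise cascading/counting in this last step to be where all the real difficulty lies; the rest is a routine induction plus the clique trick.
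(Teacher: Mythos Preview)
The paper does not prove this theorem; it is quoted as the Hajnal--Szemer\'edi theorem with a citation to~\cite{HS}, and the surrounding text only remarks that the original proof was hard and that shorter proofs later appeared. So there is no ``paper's own proof'' to compare your attempt against.

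On your sketch itself: the reduction to $s\mid n$ via a disjoint clique and the induction set-up are standard and correct. But your central structural claim has the direction of movability reversed. To push the hole from an accessible class $B$ (currently of size $q-1$) into a class $C$, you need a vertex of $C$ that is $(C\!\to\!B)$-movable, not a vertex of $B$ that is $(B\!\to\!C)$-movable; moving a vertex from $B$ to $C$ makes $B$ smaller and $C$ larger, which is the wrong direction. Hence inaccessibility of $C$ means every vertex \emph{of $C$} has a neighbor in every accessible class --- not, as you wrote, that every vertex of every accessible class has a neighbor in $C$. This reversal is not cosmetic: the correct direction is exactly what feeds the edge count between $\bigcup\mathcal A$ and $\bigcup\mathcal B$.

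Beyond that, you rightly flag that the cascading/counting step is where all the work lies, and you have not carried it out. In the Kierstead--Kostochka versions that step requires a further refinement (``solo'' vertices, an auxiliary digraph on the inaccessible classes, and a nontrivial averaging), and the naive edge count you gesture at does not by itself yield a contradiction. So what you have written is an outline with a directional slip, not yet a proof.
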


Most subsequent work on equitable coloring fits into one of two regimes, both of which are discussed in the recent excellent survey~\cite{KKX-survey}
of Kierstead, Kostochka, and Xiang.  In the dense regime, $\Delta$ is fixed and most or all vertices can have 
degree close to $\Delta$.  The original proof~\cite{HS} of \Cref{HS-thm} was quite difficult; this difficulty motivated a series of 
simpler proofs~\cite{KK-short, MS, KKMS}, culminating with~\cite[Theorem~2]{KKX-survey}.  One nice extension of the Hajnal--Szemer\'{e}di Theorem is 
an ``Ore-degree'' analogue~\cite{KK-ore-type2}: the hypothesis ``$\Delta < s$'' is replaced by ``$d(v)+d(w)<2s$ whenever $vw\in E(G)$''.  
Another pretty development is a polynomial time algorithm~\cite{KKMS} 
to find the equitable $s$-coloring promised by the Hajnal--Szemer\'{e}di Theorem.  But the bulk of the work~\cite{CM, KK-ore-type, KK-4, KK-refinement, KKY, LW, nakprasit, YZ} in this area aims at the Chen--Lih--Wu
Conjecture~\cite{CLW}, which weakens the hypothesis to ``$\Delta\le s$'' but excludes graphs containing $K_{s,s}$ when $s$ is odd.

In contrast, the sparse regime focuses on graphs $G$ with average degree much lower than $\Delta$.  Here the aim is to prove that $G$ is equitably 
$s$-colorable for some $s$ much smaller than $\Delta$;
 for example, see~\cite{LSSY, WW}.  Early work on trees by Bollobas and Guy~\cite{bollobas-guy} showed that a 
tree $T$ is equitably $3$-colorable if $|T|\ge 3\Delta-8$ or $|T|=3\Delta-10$.  
For a graph $G$, and each $v\in V(G)$, let $\alpha_v(G)$ denote the size of a largest independent set containing $v$.  
For brevity, when the context is clear we simply write $\alpha_v$.  The result of Bollobas and Guy was extended as follows.

For a graph $G$ to have an equitable $s$-coloring, the condition $\alpha_v \ge \floor{n/s}$ is clearly necessary, 
since each color class has size at least $\floor{n/s}$.  (Throughout, we let $n:=|G|$.)
Chen and Lih~\cite{CL} and Miyata, Tokunaga, and Kaneko~\cite{MTK} showed that for all trees, and all $s\ge 3$, 
this obvious necessary condition is also sufficient; and Chang~\cite{chang} gave a more elegant proof.  

\begin{thmA}[Tree Theorem~\cite{chang,CL,MTK}]
    Fix an integer $s$ with $s\ge 3$.  If $G$ is a tree
    with $\alpha_v(G)\ge \floor{n/s}$ for all $v\in V(G)$, then $G$ has an equitable $s$-coloring.
\end{thmA}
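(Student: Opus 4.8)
The plan is to prove the stronger statement that \emph{every} forest $F$ on $n$ vertices with $\alpha_v(F)\ge\floor{n/s}$ for all $v\in V(F)$ and $s\ge 3$ has an equitable $s$-coloring; the Tree Theorem is the connected case. One must work with forests, not trees, because deleting a color class from a tree yields a forest; and $s\ge 3$ is genuinely needed at this level of generality, since three disjoint copies of $K_{1,3}$ satisfy the hypothesis yet have no equitable $2$-coloring (each copy is forced to split $1$--$3$, and the parity of the number of copies then obstructs a balanced partition). Write $q:=\floor{n/s}$ and $n=qs+r$ with $0\le r<s$; if $q=0$ then $n<s$ and any proper coloring works, so assume $q\ge 1$. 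I would induct on $s$, with base case $s=3$.

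For the inductive step ($s\ge 4$) the key is a \emph{Peeling Lemma}: there is an independent set $I$ with $|I|=q+1$ if $r\ge 1$ and $|I|=q$ if $r=0$ such that $\alpha_v(F-I)\ge q$ for every $v\in V(F-I)$. Since $\floor{|F-I|/(s-1)}=q$ in both cases, $F-I$ satisfies the hypothesis for $s-1\ge 3$ colors; an equitable $(s-1)$-coloring of $F-I$ from the induction hypothesis, together with $I$ as the last color class, is an equitable $s$-coloring of $F$, and the class sizes work out because $q$ is preserved at every step.

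To prove the Peeling Lemma I would fix a maximum independent set $M$. Since $F$ is a forest, $|M|\ge\ceil{n/2}$, and one checks $\ceil{n/2}-|I|\ge q$ precisely when $s\ge 4$. Call $v$ \emph{dangerous} if $\alpha_v(F)<q+|I|$; since $\alpha_v(F)=1+\alpha(F-N[v])\ge 1+\ceil{(n-1-d(v))/2}$, a dangerous vertex has large degree. For $s\ge 5$ a degree-sum count bounds the number of dangerous vertices by a constant; for $s=4$ one uses instead that, $F$ being a forest, at most one vertex can be adjacent to almost all of $M$ (two such would force a $4$-cycle). Now build $I$ by first placing into it every dangerous vertex that lies outside $M$ (these then require no certificate), and topping $I$ up to the prescribed size using vertices of $M$ that avoid the neighborhoods of those dangerous vertices. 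A leftover vertex of $M$ is certified by $M\setminus I$, still of size $\ge q$; a leftover vertex $v\notin M$ is certified by $\{v\}\cup\bigl(M\setminus(N[v]\cup I)\bigr)$, whose size is $\ge q$ exactly because $v$ is not dangerous. Checking that the dangerous vertices outside $M$ may be assumed independent and that $M$ has enough room to absorb the shortfall, with the ``near-star'' configuration (one vertex dominating almost all of $M$) handled separately, is the technical core of this step.

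I expect the base case $s=3$ to be the main obstacle. Peeling one color class leaves a forest that must be equitably $2$-colored, which---by the $3K_{1,3}$ example---does not follow from the pointwise $\alpha$-bound alone. A forest is equitably $2$-colorable iff its component bipartition imbalances are balanceable (no imbalance exceeds one plus the sum of the others, plus a parity condition), so I would choose the first class $I$ (of size $q$ or $q+1$) so that $F-I$ not only keeps $\alpha_v\ge q$ for all $v$ but also has balanceable component imbalances; the $\alpha$-hypothesis on $F$, together with connectivity when $F$ is a tree, should make such $I$ available, re-routing $I$ to straddle offending small components as needed. More broadly, all of the genuine difficulty seems concentrated in the near-star regime---some vertex of degree close to $n-2q$---where low-$\alpha_v$ vertices and rigid bipartitions both arise and the degree-sum count is weakest; outside that regime every $\alpha_v$ comfortably exceeds $q$, $M$ is large, and the peeling argument has room to spare.
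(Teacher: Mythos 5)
First, a framing point: the paper does not prove this statement at all --- it is quoted as a known result of Chen--Lih, Miyata--Tokunaga--Kaneko, and Chang, and the only argument the paper supplies is the footnote reduction extending it from trees to forests (by adding an edge between the smaller bipartition classes of two components and inducting on the number of components). So your attempt is really being measured against the literature, not against anything in this paper; working directly with forests is a reasonable choice (and your $3K_{1,3}$ remark correctly explains why $s\ge 3$ is needed there), and the class-size arithmetic in your peeling scheme is fine. But as a proof the proposal has genuine gaps in both of its load-bearing steps.

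The first gap is the base case $s=3$, which is the actual content of the theorem (it is where Bollob\'as--Guy, Chen--Lih, and Chang do their work): after peeling one class, equitable $2$-colorability of the remaining forest is a global balance condition on component bipartitions, not a pointwise $\alpha_v$ condition, and your text only says that a suitable $I$ ``should'' be available, ``re-routing $I$ to straddle offending small components as needed.'' Nothing is proved here, and this is exactly the hard regime. The second gap is in the Peeling Lemma for $s=4$. Your bookkeeping declares $v$ dangerous when $\alpha_v(F)<q+|I|$, and the handling of dangerous vertices (absorb those outside $M$ into $I$, avoid their neighborhoods otherwise) requires that there be very few of them. The degree bound $\alpha_v\ge 1+\ceil{(n-1-d(v))/2}$ gives a dangerous vertex only $d(v)\ge n-4q+1=r+1$ when $s=4$, which is vacuous, and in fact the dangerous set can have size $\Theta(n)$: take $F=P_{4q+1}$, whose unique maximum independent set $M$ consists of the odd positions, while every even-position vertex $v$ has $\alpha_v=2q=q+|I|-1$ and hence is dangerous and outside $M$. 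There are $2q$ such vertices, so they cannot be placed into $I$ (of size $q+1$), and your fallback for $s=4$ (``at most one vertex adjacent to almost all of $M$'') does not apply to them, since each has only two neighbors in $M$. The Peeling Lemma itself is very plausibly true --- in the path example almost none of these ``dangerous'' vertices is actually harmed by a sensible choice of $I$ --- but that is precisely the point: your dichotomy dangerous/non-dangerous is too coarse at $s=4$, and the argument needs a finer criterion for which vertices a given $I$ actually hurts. Until the $s=3$ case and the $s=4$ peeling step are carried out, the proposal is an outline of an induction whose two critical cases are missing.
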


A natural superclass of trees is outerplanar graphs, those with a plane
embedding where all vertices lie on the outer face.  Kostochka~\cite{kostochka-outerplanar} showed that every outerplanar graph $G$ with 
$\Delta\ge 3$ has an equitable $s$-coloring whenever $s\ge 1+\Delta/2$.  For some graphs this hypothesis on $s$ is needed, but for many it is not.  
In a vein similar to that of Bollobas and Guy, Pemmaraju (\cite{pemmaraju-manuscript}; see~\cite[Theorem~7]{pemmaraju-extends}) proved that every 
outerplanar graph $G$ has an equitable $6$-coloring when $n\ge 6\Delta$; indeed, his proof relies heavily on the result above of Bollobas and Guy. 

Kostochka, Nakprasit, and Pemmaraju~\cite{KNP,PNK-SODA} proved general results on equitable coloring of $d$-degenerate graphs.  
(Earlier related work includes~\cite{KN}.)
Since all planar graphs are $5$-degenerate, their work implies that every planar graph with $\Delta \le n/15$ is equitably $s$-colorable when $s\ge 80$.
Since all outerplanar graphs are
$2$-degenerate, this work of Pemmaraju---and our work in the present paper---can be viewed in their general framework. 
Finally Kierstead, Kostochka, and Xiang~\cite{KKX-survey} posed~Question~\ref{outerplanar-ques} below, 
asking whether the Tree Theorem can be extended to all outerplanar graphs.  
In this paper we largely answer their question. 

\begin{ques}[Kierstead,\,Kostochka,\,Xiang~\cite{KKX-survey}]
\label{outerplanar-ques}
\label{KKX-ques}
Does each outerplanar graph $G$ have an equitable $s$-coloring
if and only if every $v\in V(G)$ satisfies $\alpha_v\ge \floor{n/s}$ or, equivalently, 
$s\ge \ceil{\frac{n+1}{\alpha_v+1}}$?
\end{ques}

\noindent 
We mentioned above that the condition $\alpha_v\ge \floor{n/s}$ for all $v$ is clearly necessary for $G$ to have an equitable $s$-coloring.
The equivalence of this first condition with the second follows from starting with $s\ge (n+1)/(\alpha_v+1)$ and rewriting as $\alpha_v\ge (n+1-s)/s$.
Thus, Question~\ref{outerplanar-ques} above
asks whether we can extend the Tree Theorem to all outerplanar graphs.
Our Main Theorem shows that the answer to Question~\ref{outerplanar-ques} is Yes, mostly. 

\begin{mainthm}
Fix $s\ge 6$, and let $G$ be outerplanar.  If $\alpha_v\ge \floor{n/s}$ for all $v\in V(G)$, 
then $G$ has an equitable $s$-coloring.  The analogous statement is false for $s=3$ and open for $s\in\{4,5\}$.
\end{mainthm}

The short answer to Question~\ref{outerplanar-ques} is No, since when $s=3$ there exist counterexamples; see \Cref{s=3-lemma} below.  
In fact, the only $3$-colorings for these graphs $G$ have their largest color class of size $n/2$ and their next largest color class of size 
$n/4$, so they are far from equitably $3$-colorable.  (Indeed, these are the largest and smallest
values, respectively, that can occur for these sizes.) But the longer answer to Question~\ref{outerplanar-ques} 
is Yes, with the slightly stronger hypothesis $s\ge 6$.  For $s\in\{4,5\}$, the problem remains open, but we believe 
that the answer is also Yes.

\begin{conj}
	Fix $s\in \{4,5\}$.  If $G$ is outerplanar and $\min_{v\in V(G)}\alpha_v(G)\ge \floor{|G|/s}$, then $G$ has an equitable $s$-coloring.
	\label{45-conj}
\end{conj}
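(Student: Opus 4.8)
\medskip
\noindent\textbf{A plan of attack.}
The plan is to push the argument behind the Main Theorem (the $s\ge 6$ case) down to $s\in\{4,5\}$. As there, we would take a vertex-minimal counterexample $G$, show it must contain one of a short list of reducible configurations, delete a carefully chosen vertex set $S$ around such a configuration, apply minimality to equitably $s$-color $G-S$, and then give $S$ the $s$ colors. Two structural facts carry the argument: $G$ is $2$-degenerate, so it has many vertices of degree at most $2$ and, after reducing to the $2$-connected edge-maximal case (a triangulated polygon) and discharging on its (tree) weak dual, a vertex with several nearby low-degree vertices; and the hypothesis $\min_v\alpha_v(G)\ge\floor{n/s}$ bites hardest at the high-degree vertices, where it hands us a large independent set $I_v$ disjoint from the small set $N(v)$.

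The routine steps are: (1) reduce to $G$ $2$-connected by splitting at cut vertices and recombining colorings after permuting colors, then (as far as possible) reduce to $G$ edge-maximal by adding only chords inside faces incident to degree-$2$ vertices, so that none of the independent sets we rely on is destroyed; (2) by discharging, find a short ``ear'' --- a path of degree-$2$ vertices on a chord, or a degree-$2$ vertex whose two neighbours are adjacent --- and enlarge its vertex set to a set $S$ of exactly $s$ (or a bounded multiple of $s$) vertices that, chosen with foresight, can receive all $s$ colors once $G-S$ is colored; (3) check $\alpha_v(G-S)\ge\floor{(n-|S|)/s}$ for every remaining $v$; (4) extend the coloring to $S$. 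Because $|S|$ is a multiple of $s$, in step (3) the target drops from $\floor{n/s}$ by exactly $|S|/s$, so $S$ must meet each witness set $I_v$ in at most $|S|/s$ vertices; this forces $S$ to be assembled from the combinatorics of $\{I_v : v\in V(G)\}$, not merely from the local picture around the ear, which is the real content of the reduction.

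We expect the main obstacle to be precisely this bookkeeping once $s$ is small. The $s=3$ counterexamples of \Cref{s=3-lemma} --- whose only $3$-colorings are as unbalanced as possible --- sit ``one step'' from $s=4$, so for $s\in\{4,5\}$ there is almost no slack: a configuration can fail to be reducible because some completion of it forces a color class far from $\floor{n/s}$, and such bad local structures must be ruled out by hand. The dangerous case is a high-degree vertex $w$ around which $G$ resembles one of these rigid configurations; here the degeneracy-based tools of Kostochka, Nakprasit, and Pemmaraju do not reach $s\le 5$, so instead we would commit $w$ together with a near-maximum independent set $I_w$ to one class and recurse on $G-I_w$. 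Since $G-I_w$ need not be outerplanar with good $\alpha$-values, this seems to require either choosing $I_w$ to be an independent dominating set (so that $G-I_w$ has controlled structure) or --- the route we would favour --- strengthening the Main Theorem to a more flexible equitable-list-coloring statement, in the spirit of Chang's proof of the Tree Theorem, that survives all of the deletions above; checking that its hypotheses are preserved under the induction would then become the new technical core.
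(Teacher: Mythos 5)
This statement is not a theorem in the paper: it is stated as a conjecture, explicitly left open (the paper resolves $s\ge 6$ and refutes $s=3$; the only progress on $s\in\{4,5\}$ is Lemma~\ref{reduce-to-smallest-s-lem} in the appendix, which shows the $s=5$ case would follow from the $s=4$ case). So what you have written cannot be compared against a proof in the paper, and, more importantly, it is not itself a proof: it is a plan whose decisive steps are acknowledged but not carried out. The genuine gap is exactly the one you flag and then defer. You never exhibit a list of reducible configurations, never prove that a minimal counterexample contains one, and never verify the crucial step (3), that $\alpha_v(G-S)\ge\floor{(n-|S|)/s}$ for all remaining $v$. This last point is where the approach is most likely to fail outright: the hypothesis only guarantees, for each $v$, \emph{some} witness independent set $I_v$ of size $\floor{n/s}$, and a locally chosen ear $S$ of size $|S|=s$ (or a bounded multiple) can meet every witness through a given vertex in more than $|S|/s$ vertices; making $S$ compatible with the whole family $\{I_v\}$ is a global constraint, and with $s\in\{4,5\}$ there is essentially no slack, as the $s=3$ construction of \Cref{s=3-lemma} (where every $3$-coloring is maximally unbalanced) shows that purely local reductions can be defeated. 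Your proposed escape routes --- choosing $I_w$ to be an independent dominating set, or proving an equitable-list-coloring strengthening in the spirit of Chang --- are named but not formulated, let alone proved, and each is itself a substantial open-ended problem.

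Two further points of comparison with the paper's actual machinery. First, the engine of the Main Theorem is not a minimal-counterexample/discharging argument of the kind you sketch: it is the Partitioning Lemma (an equipartition into two induced forests with degree control) combined with the Tree Theorem, plus bespoke handling of one or two ``dangerous'' high-degree vertices. That engine inherently produces $2\times 3=6$ color classes and does not scale down to $4$ or $5$ colors, so ``pushing the $s\ge 6$ argument down'' requires a genuinely new idea, not a tightening of constants. Second, since the paper already reduces $s=5$ to $s=4$, any serious attack should target $s=4$ alone; your plan treats both cases uniformly and does not recover or use that reduction. As it stands, the proposal identifies the right difficulties but resolves none of them, so the conjecture remains open under your approach.
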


\noindent
As modest support for this conjecture, we show in the appendix that if it is true when $s=4$, then it is also true when $s=5$.
\smallskip

Our other main result in this paper is an analogue of our Main Theorem for all planar graphs.

\begin{planarthm}
	Fix $s\ge 40$, and let $G$ be a planar graph.  Let $w_0,w_1$ be its $2$ vertices with largest degrees, breaking ties arbitrarily.
	If there exist disjoint independent sets $I_0, I_1$ such that $|I_0|=\floor{n/s}$ and $|I_1| = \floor{(n+1)/s}$ and $w_0,w_1\in I_0\cup I_1$, 
	then $G$ has an equitable $s$-coloring.  
\end{planarthm}

In the Planar Theorem, we have a slightly stronger hypothesis.  The reason for this is that the hypothesis $\min_{v\in V(G)}\alpha_v\ge \floor{n/s}$ is
now insufficient to guarantee an equitable $s$-coloring.  Consider the graph $(K_2\vee P_{s^2})+sK_1$, formed from the join of $K_2$ and $P_{s^2}$
by adding $s$ isolated vertices.  If we denote by $w_1,w_2$ the $2$ high degree vertices, then we have $\alpha_{w_i}=s+1 = \floor{n/s}$.
And each other vertex $x$ has degree at most $4$, so $\alpha_x\ge \floor{n/s}$.  But this graph has no equitable $s$-coloring, since $w_1$ and $w_2$
must be colored distinctly, but each must be colored the same as all isolated vertices, in order to be in a sufficiently large color class.

The main appeal of the Planar Theorem, when compared with previous related work, is that its hypothesis is both necessary and sufficient.  
Indeed, we have removed the hypothesis on $\Delta$.  A secondary advantage is that it works for all $s\ge 40$.  We have made little effort 
to optimize the bound on $s$, since we believe that it is far from best possible.

\begin{conj}
	The Planar Theorem remains true if we replace ``40'' in its statement by ``5''.
	\label{planar-conj}
\end{conj}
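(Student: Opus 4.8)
\medskip
\noindent\emph{Sketch of a possible approach to \Cref{planar-conj}.}
The plan is an induction on $n:=|G|$ via a minimal counterexample, in which the wasteful slack responsible for the constant $40$ is replaced by a tight Euler-formula discharging argument on plane graphs. First, if $\Delta(G)<s$ then \Cref{HS-thm} already produces an equitable $s$-coloring with no need for $I_0,I_1$, so we may assume $\Delta(G)\ge s\ge 5$, and in particular $d(w_0)\ge s$. It is convenient to strengthen the statement so that it applies to every planar graph (not only a minimal counterexample): \emph{if $H$ is planar, $u_0,u_1$ are two vertices of largest degree in $H$, and there are disjoint independent sets $J_0,J_1\subseteq V(H)$ with $|J_0|=\floor{|H|/s}$, $|J_1|=\floor{(|H|+1)/s}$ and $u_0,u_1\in J_0\cup J_1$, then $H$ has an equitable $s$-coloring in which $J_0$ and $J_1$ are two of the color classes.} Insisting that $J_0,J_1$ survive as color classes is exactly what makes the re-insertion steps below work; as in the Main Theorem this is no real loss, since any equitable coloring can be permuted to place a prescribed class of size $\floor{|H|/s}$ and a prescribed class of size $\floor{(|H|+1)/s}$ on two fixed colors. (The base range $\Delta(H)<s$ of the strengthened statement requires a version of the Hajnal--Szemer\'edi Theorem allowing two prescribed independent sets of the stated sizes as color classes, which should follow by adapting one of the known short proofs.)

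\medskip
The reducible configurations are the usual low-degree catalogue, chosen so as never to disturb $w_0,w_1$ or the sets $I_0,I_1$: a vertex $v\notin I_0\cup I_1$ of degree at most $2$; an edge between two vertices of small degree with small common neighborhood; a face with many incident low-degree vertices; a separating triangle or quadrilateral splitting $G$ into pieces each inheriting the hypothesis; and so on. For each such configuration one deletes (or contracts) a piece of bounded size to obtain a planar graph $G'$ with $|G'|<n$, locates the two highest-degree vertices of $G'$ inside appropriately-sized disjoint independent sets $J_0',J_1'$ obtained by pruning $I_0,I_1$, applies the strengthened induction hypothesis, and reinstates the deleted piece: its vertices see fewer than $s$ colors among their colored neighbors, and the resulting imbalance---at most $1$ per class---is repaired by a bounded number of alternating-path swaps that avoid $J_0',J_1'$. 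The bookkeeping here is delicate but routine: deleting a bounded piece changes $\floor{|G'|/s}$ and $\floor{(|G'|+1)/s}$ by at most $1$ and reshuffles the degree order only slightly, so $J_0',J_1'$ differ from $I_0\setminus(\text{deleted})$ and $I_1\setminus(\text{deleted})$ by at most one vertex each, using that the new top vertices are either $w_0,w_1$ themselves or have large $\alpha$.

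\medskip
It remains to show that a plane graph containing a vertex $w_0$ with $d(w_0)\ge s\ge 5$ and none of the reducible configurations cannot exist. For this I would use discharging with initial charge $d(v)-6$ at each vertex and $2\ell-6$ at each $\ell$-face (total charge $-12$): the absence of the low-degree configurations forces every vertex of degree at most $5$ to be hemmed in by high-degree neighbors in a pattern that, once $s\ge 5$, cannot be sustained globally, except possibly for a short finite list of dense exceptional plane graphs (near-$K_5$ graphs, bipyramids over short paths, the octahedron and its close relatives), which are handled by direct inspection. \textbf{The principal obstacle is precisely this tightness.} At $s=5$ the per-class budget $\floor{n/s}$ coincides with the degeneracy of planar graphs, so none of the lossy estimates behind the constant $40$---and none of the degeneracy-based machinery of Kostochka--Nakprasit--Pemmaraju~\cite{KNP}, which inherently loses a constant factor---can be afforded: every reducible configuration must be essentially best possible and the discharging essentially exact, which in practice means a long case analysis, quite possibly computer-assisted in the spirit of discharging proofs of other planar-coloring theorems. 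A second, genuinely structural difficulty is that the hypothesis is fragile under the induction: both the identity of the two highest-degree vertices and the very existence of disjoint independent sets of the prescribed sizes can be lost when a configuration is deleted, so one needs a clean lemma---valid for all planar graphs, not merely for a minimal counterexample---that such sets can always be re-chosen after a bounded local modification; showing that the hypothesis is hereditary under these reductions is where I expect most of the effort to go. Finally, for the two smallest admissible values $s\in\{5,6\}$ one may wish to treat graphs with $\Delta$ only slightly larger than $s$ separately, by peeling off $I_0$ or $I_1$ and invoking the Main Theorem (when $s-1\ge 6$) or its conjectural extension \Cref{45-conj} on the remaining, nearly outerplanar, graph, before the discharging above takes over in the range where $\Delta$ is large.
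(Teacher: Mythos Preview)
The statement you are addressing is \Cref{planar-conj}, which the paper states as an \emph{open conjecture}; there is no proof in the paper to compare against. The paper only argues that replacing ``40'' by ``4'' would be false, via an explicit family of planar graphs built from copies of $K_4$. So your write-up is not a proof of a theorem but a speculative sketch toward an open problem, and you appropriately flag it as such.

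That said, several steps in your sketch are not merely ``delicate but routine'' and would likely fail as stated. First, the strengthened base case---a Hajnal--Szemer\'edi Theorem in which two independent sets of prescribed sizes are forced to be color classes---is not a known result, and it is not clear it follows from any of the short proofs; prescribing even one color class already changes the problem substantially. Second, your final paragraph suggests that after peeling off $I_0$ or $I_1$ one might invoke the Main Theorem on ``the remaining, nearly outerplanar, graph''; but deleting an independent set from a planar graph does not yield an outerplanar graph, so the Main Theorem (which concerns outerplanar graphs) is inapplicable here. Third, and most seriously, the heart of your plan---a discharging argument showing that a planar graph with $\Delta\ge 5$ and none of your reducible configurations cannot exist---is essentially the entire content of the conjecture: you have not specified a catalogue of configurations, and your own remarks (``a long case analysis, quite possibly computer-assisted'', ``showing that the hypothesis is hereditary\ldots is where I expect most of the effort to go'') concede that the key ideas are missing. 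In short, this is a reasonable outline of \emph{where} the difficulties lie, but it does not constitute progress on the conjecture itself.
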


If true, \Cref{planar-conj} is best possible.  That is, replacing ``40'' by ``4'' yields a statement that is false.  
It is straightforward\footnote{An \emph{extender} is a copy of the join $K_2\vee 2K_2$.  The single edge in the $K_2$ is the \emph{root edge} and the 
$2$ edges in the $2K_2$ are the \emph{leaf edges}.  Let $G_1$ be an extender.  For all $i\ge 2$, form $G_i$ from $G_{i-1}$ by picking an arbitrary
leaf edge $e$ in $G_{i-1}$ and doing the following.  Add an extender $H$ with its root edge identified with $e$.  Now add $2$ more extenders
with their root edges identified with the $2$ leaf edges of $H$.}, by identifying vertices in copies of $K_4$, to construct planar graphs $G_i$ 
with maximum degree $7$ and with order arbitrarily large in which every $4$-coloring has $2$ color classes of size $|G_i|/3$ and $2$ color classes 
of size $|G_i|/6$.

The proof of the Planar Theorem is relatively short, so we present it in Section~\ref{planar-sec}.
	We begin \Cref{outerplanar-sec} with the construction in our Main Theorem for $s=3$.  In \Cref{proof-overview-sec}, we outline the proof of the 
	upper bound in the Main Theorem.  In \Cref{s=8-sec} we prove the Main Theorem when $s\ge 8$, subject to a key ``Partitioning Lemma''.
	In \Cref{partitioning-sec} we prove the Partitioning Lemma, and in \Cref{s=6-sec} we finish the proof of the Main Theorem, when $s\ge 6$.
	Finally, in the appendix we offer some evidence supporting Conjecture~\ref{45-conj}; we also provide some details that we previously deferred,
	in the proof of the Partitioning Lemma.

\section{Planar Graphs}
\label{planar-sec}
We need the following result~\cite{ELM} of Esperet, Lemoine, and Maffray. (We write $[k]$ for $\{1,\ldots,k\}$.)
\begin{thmA}[\cite{ELM}]
	\label{ELM-thm}
	Every planar graph $G$ has a partition of $V(G)$ as $F_1\uplus F_2\uplus F_3\uplus F_4$ such that each $F_i$ induces a forest and 
	$||F_i|-|F_j||\le 1$ for all $i,j\in[4]$.
\end{thmA}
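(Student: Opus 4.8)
The plan is to prove \Cref{ELM-thm} by induction on $n:=|V(G)|$, exploiting the fact that every planar graph is $5$-degenerate. (Recall also that planar graphs have vertex arboricity at most~$3$, but a partition into three induced forests need not be anywhere near balanced, so we genuinely need $4$ parts and a rebalancing mechanism.) Set $m:=4$. The base case $n\le m$ is immediate: put each vertex in its own part, so every part has size $0$ or $1$ and trivially induces a forest. For the inductive step, let $v$ be a vertex of $G$ with $d_G(v)\le 5$ and apply induction to the planar graph $G-v$, obtaining a partition $V(G-v)=F_1\uplus F_2\uplus F_3\uplus F_4$ in which each $F_i$ induces a forest and $||F_i|-|F_j||\le 1$ for all $i,j$. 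Writing $q:=\floor{(n-1)/4}$, each part has size $q$ or $q+1$; say $r$ of the four parts have size $q+1$, so that $4-r\ge 1$ parts have size $q$. A short arithmetic check shows that to finish it suffices to reinsert $v$ so that (i) each part still induces a forest, (ii) the part-sizes still differ pairwise by at most one, and (iii) the total number of vertices increases by exactly one; any such partition of $V(G)$ is equitable. Call a part \emph{small} if it has size $q$ and \emph{large} if it has size $q+1$, and say a part $F_i$ is \emph{open to} $v$ if $F_i\cup\{v\}$ induces a forest, equivalently if no two neighbors of $v$ in $F_i$ lie in a common tree of $F_i$; in particular any part containing at most one neighbor of $v$ is open to $v$.

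If some small part is open to $v$, add $v$ to it: that part becomes large, every part is still a forest, and the sizes still differ by at most one, so we are done. Hence assume no small part is open to $v$. Then $v$ has at least two neighbors in each small part, so $2(4-r)\le d_G(v)\le 5$, which forces $r\ge 2$; thus at least two parts are large and at most two are small. Moreover $v$ has at most $5-2(4-r)=2r-3$ neighbors in the $r$ large parts, and since $2r-3<2r$ some large part must contain at most one neighbor of $v$, hence be open to $v$; fix such a large part $F_\ell$. The idea now is to \emph{push} excess along a short chain of parts: put $v$ into $F_\ell$, so that $F_\ell$ temporarily has size $q+2$; then repeatedly move a vertex out of the current size-$(q+2)$ part into another, not-yet-used part that can legally receive it, until the excess reaches a part that was small, which thereby becomes size $q+1$ and leaves all four sizes in $\{q,q+1\}$. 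Deleting a vertex from a part never destroys the forest property, so a move is legal exactly when the \emph{receiving} part together with the moved vertex still induces a forest; and since there are only four parts, the chain has length at most four and never needs to revisit a part. A direct tally confirms that, in both the easy case and the end of the chain, the number of size-$(q+1)$ parts becomes $r+1$, which is precisely what an equitable partition of $n=4q+(r+1)$ vertices into four parts requires.

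The only real difficulty is guaranteeing that a legal push is always available: from a part $F$ of size $q+2$ one must find a vertex $u\in F$ and an available part $F'$ (of size $q$ or $q+1$) whose neighbors of $u$ are spread across distinct trees of $F'$ — for instance, a $u\in F$ with at most one neighbor in some available $F'$. I expect this to be the crux. Naive edge counting does not suffice: using only Euler's formula for the planar bipartite subgraphs between the relevant parts, one cannot rule out the degenerate situation in which every vertex of $F$ has two neighbors in a common tree of \emph{every} candidate receiving part. Circumventing this calls for a more careful argument in the spirit of the augmenting/exchange arguments behind modern proofs of the Hajnal--Szemer\'edi Theorem~\cite{HS,KK-short,KKMS}: one should choose the pushed vertex with attention to the tree structure of the receiving part (for example, preferring vertices that are leaves of the trees they attach to), chain several such pushes together, and track a suitable potential to certify progress. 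Alternatively, one can replace ``delete a vertex of degree at most $5$'' by ``delete an explicit reducible configuration obtained from a discharging argument,'' which builds in enough local structure to make the reinsertion routine. Apart from this step, the proof is just the bookkeeping above.
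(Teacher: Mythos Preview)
Your proposal has a genuine gap, and you correctly identify it yourself: you cannot guarantee that a legal push is always available. Everything in your write-up before that point is bookkeeping; the entire content of the theorem lives in exactly the step you leave open. Your suggested remedies---``a more careful argument in the spirit of the augmenting/exchange arguments behind modern proofs of the Hajnal--Szemer\'edi Theorem'' or ``replace `delete a vertex of degree at most $5$' by an explicit reducible configuration from discharging''---are not arguments but research directions, and neither is known to yield this result. In particular, the Hajnal--Szemer\'edi machinery is built for independent sets, where ``open to $v$'' means ``$v$ has no neighbor in the part''; your forest version requires ``no two neighbors of $v$ in the same tree of the part,'' and the exchange/potential arguments do not transfer in any obvious way. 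As you note, planarity of the bipartite graph between two parts does not prevent every vertex of one part from having two neighbors in a common tree of the other, so the obstruction is real.

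The paper does not supply its own proof of this statement; it is quoted from Esperet, Lemoine, and Maffray~\cite{ELM}. But the paper does tell you what the actual proof uses, and it is a completely different idea from yours: it ``hinges crucially on Borodin's result that every planar graph has acyclic chromatic number at most~$5$.'' The point is that in an acyclic $5$-coloring $V_1,\ldots,V_5$, the union of \emph{any} two color classes induces a forest. One then redistributes the five classes into four nearly-equal parts so that each part is contained in some $V_i\cup V_j$; since each such union is already a forest, so is every subset of it, and no delicate cycle-avoidance or exchange argument is needed. This structural input---acyclicity of the coloring, not merely $5$-degeneracy---is exactly what your approach is missing, and it is why the ELM proof is short while yours stalls at the crux.
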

	The proof of \Cref{ELM-thm} is short and elegant, and it hinges crucially on Borodin's result~\cite{borodin-5acyclic} that every planar graph has 
	acyclic chromatic number at most $5$.  An easy application of \Cref{ELM-thm} implies the following lemma.
\begin{lem}
	Fix a positive integer $s$. If $G$ is planar and $\Delta(G)\le \frac{s-2}{4s}|G|$, then $G$ has an equitable $4s$-coloring.
	In particular, if $\Delta(G)\le |G|/5$, then $G$ has an equitable $40$-coloring.
	\label{planar-lem}
\end{lem}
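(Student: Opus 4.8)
The plan is to use \Cref{ELM-thm} to partition $V(G)$ into four induced forests of nearly equal size, equitably $s$-color each one with its own block of $s$ colors, and then check that the four colorings together form an equitable $4s$-coloring of $G$. If $s\le 2$, the hypothesis $\Delta(G)\le\frac{s-2}{4s}n$ forces $G$ to be edgeless (and $s=1$ is vacuous), so $G$ is trivially equitably $4s$-colorable; hence I may assume $s\ge 3$. Applying \Cref{ELM-thm} gives $V(G)=F_1\uplus F_2\uplus F_3\uplus F_4$ with each $G[F_i]$ a forest and $\bigl||F_i|-|F_j|\bigr|\le 1$ for all $i,j$.

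I would then establish the two facts this reduction rests on. First, that $\alpha_v(G[F_i])\ge\floor{|F_i|/s}$ for every $i$ and every $v\in F_i$: deleting $v$ and its $\deg_{G[F_i]}(v)$ neighbors from $G[F_i]$ leaves a bipartite graph on $|F_i|-1-\deg_{G[F_i]}(v)$ vertices, which has an independent set of at least half that size, so, since $\deg_{G[F_i]}(v)\le\Delta(G)$, we get $\alpha_v(G[F_i])\ge\tfrac12\bigl(|F_i|+1-\Delta(G)\bigr)$; as $|F_i|\ge\floor{n/4}\ge(n-3)/4$, the hypothesis $\Delta(G)\le\frac{s-2}{4s}n$ makes this at least $|F_i|/s$, because the inequality $\tfrac12(|F_i|+1-\Delta(G))\ge|F_i|/s$ reduces to $\Delta(G)\le\frac{s-2}{s}|F_i|+1$ and $\frac{s-2}{s}\cdot\frac{n-3}{4}+1-\frac{s-2}{4s}n=\frac{s+6}{4s}>0$. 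Second, that the class counts match: set $a:=\floor{n/(4s)}$ and $b:=n-4sa\in\{0,\dots,4s-1\}$; then $\bigl||F_i|-|F_j|\bigr|\le1$ forces $|F_i|\in\{\floor{n/4},\ceil{n/4}\}=\{sa+\floor{b/4},\,sa+\ceil{b/4}\}$, so each $c_i:=|F_i|-sa$ lies in $\{0,1,\dots,s\}$ and $\sum_{i=1}^{4}c_i=\bigl(\sum_i|F_i|\bigr)-4sa=b$.

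Now I would invoke the Tree Theorem — the forest version suffices here, and I would either cite it or deduce it from the tree case — to equitably $s$-color each $G[F_i]$. In such a coloring every color class has size $\floor{|F_i|/s}$ or $\ceil{|F_i|/s}$, both of which equal $a$ or $a+1$ since $|F_i|=sa+c_i$ with $0\le c_i\le s$, and exactly $c_i$ classes have size $a+1$. Recoloring $G[F_i]$ with the colors $\{(i-1)s+1,\dots,is\}$ and taking the union over $i$ then gives a proper $4s$-coloring of $G$ — it is proper on each forest $G[F_i]$, and no edge between distinct $F_i$'s is monochromatic because the four palettes are disjoint — in which every class has size $a=\floor{n/(4s)}$ or $a+1$ and exactly $\sum_i c_i=b=n-4s\floor{n/(4s)}$ of the classes have size $a+1$; that is, an equitable $4s$-coloring. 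The ``in particular'' clause is the case $s=10$, since $\frac{10-2}{4\cdot 10}=\frac15$. This really is the ``easy application'' the text promises, so there is no substantial obstacle; the two steps that reward care are the inequality $\alpha_v(G[F_i])\ge\floor{|F_i|/s}$ — which is what pins down the constant $\frac{s-2}{4s}$ — and the final bookkeeping, which is exactly why one takes the forests from \Cref{ELM-thm} (so that their orders are $\floor{n/4}$ or $\ceil{n/4}$) rather than from a coarser partition.
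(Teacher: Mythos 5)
Your proposal is correct and follows essentially the same route as the paper: apply the Esperet--Lemoine--Maffray equipartition into four induced forests, verify $\alpha_v(F_i)\ge\floor{|F_i|/s}$ via the bipartiteness of forests and the bound $\Delta(G)\le\frac{s-2}{4s}n$, and equitably $s$-color each forest with its own palette (the paper's footnote supplies the forest version of the Tree Theorem). Your extra care with $s\le 2$ and the explicit $a,b,c_i$ bookkeeping for the class sizes are fine refinements of details the paper treats implicitly.
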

\begin{proof}
	The second statement follows from the first by letting $s:=10$.
	By the previous theorem, we partition $V(G)$ as $F_1\uplus F_2\uplus F_3\uplus F_4$ where each $F_i$ induces a forest and $||F_i|-|F_j||\le 1$
	for all $i,j\in[4]$.  By the Tree Theorem\footnote{The Tree Theorem, as stated, applies only to trees; but we will often
want to apply it to forests.  So we now prove the equivalence.  Suppose that $F$ is a forest with $\min_{v\in V(F)}\alpha_v(F)\ge\floor{n/s}$ for 
some integer $s$.  Let $T_1$ and $T_2$ be trees of $F$, and let $v_1$ (resp.~$v_2$) lie in the weakly smaller color class of a $2$-coloring of $T_1$ 
(resp.~$T_2$).  Let $F':=F+v_1v_2$.  It is easy to check that also $\min_{v\in V(F')}\alpha_v(F')\ge \floor{n/s}$.  By induction on the number of trees in
$F$, we reduce to the case that $F$ is a single tree.  (Chang~\cite{chang} was the first to state this more general version, allowing forests.)}, we equitably $s$-color each $F_i$.  To see that the theorem applies, we note for each $i\in [4]$
	that $\Delta(F_i)\le \Delta(G)\le \frac{s-2}{4s}|G|\le \frac{s-2}s|F_i|+1$.  So for each $v\in F_i$ we have
	$$\alpha_v(F_i) \ge 1+\left\lceil\frac{|F_i|-d_{F_i}(v)-1}2\right\rceil\ge1 + \left\lceil\frac{|F_i| - \frac{s-2}{s}|F_i| - 2}2\right\rceil 
	= \ceil{|F_i|/s}.
	$$
	Using colors $si,si-1,\ldots,s(i-1)+1$ on $F_i$, for each $i\in[4]$, gives the equitable $4s$-coloring.
\end{proof}

\Cref{planar-lem} strengthens the analogous result of Kostochka, Nakprasit, and Pemmaraju~\cite{KNP} that we mentioned above; our result 
allows larger maximum degree or requires fewer colors (or some of both).  
However, we prefer to replace the bound on $\Delta$ with some hypothesis that is necessary for the desired equitable coloring to exist.  
\begin{planarthm}
	Fix $s\ge 40$, and let $G$ be a planar graph.  Let $w_0,w_1$ be its $2$ vertices with largest degrees, breaking ties arbitrarily.
	If there exist disjoint independent sets $I_0, I_1$ such that $|I_0|=\floor{n/s}$ and $|I_1| = \floor{(n+1)/s}$ and $w_0,w_1\in I_0\cup I_1$, 
	then $G$ has an equitable $s$-coloring.  
\end{planarthm}
\begin{proof}
	Let $G_0:=G$, let $G_1:=G_0-I_0$, and let $G_2:=G_1-I_1$.
	For each integer $j\ge 0$, we do the following.  If $4$ divides $s-j$ and $\Delta(G_j)$ is sufficiently small, then by \Cref{planar-lem} we 
	partition $V(G_j)$ into $4$ forests and equitably color each forest with $(s-j)/4$ colors.  But if $\Delta(G_j)$ is too big, or $4$ does not 
	divide $s-j$, then we delete an independent set $I_j$ (of the right size) containing a vertex $w_j$ of maximum degree in $G_j$. We let $j:=j+1$, 
	call our new graph $G_j$, and repeat the loop.  All that remains is to verify that this procedure indeed produces an equitable $s$-coloring.

	We show that the algorithm above finishes at some point with $j\le s-12$.
	Because $G_2$ is planar, it is $4$-colorable.  If $\Delta(G_2)\le 2n/3$, then $\alpha_{w_2}(G_2)\ge 1 + (38n/40 - 2n/3-1)/4\ge n/15$.
	Otherwise, $|N_G(w_0)\cap N_G(w_1)|\ge 2(2n/3) - n = n/3$; since $G$ has no $K_{3,3}$, at most $2$ of these vertices are in $N_G(w_2)$.
	So $\alpha_{w_2}(G_2) \ge 1 + (n/3-2-2n/40)/4 \ge n/15$.  Similarly, we can find $I_3$ in $G_3$ containing $w_3$; here we condition on 
	whether or not $\Delta(G_3)\le n/2$.  

	Now we assume that $j\ge 4$.
	For each $j$, let $n_j:=|G_j|$.  To prove correctness, we maintain the invariant, when $4\le j\le s-12$, that $\Delta(G_j) \le 2n_j/3$.
	If not, then 
	$$|E(G)| \ge \sum_{i=0}^jd_{G_i}(w_i) \ge (j+1)d_{G_j}(w_j) \ge (j+1)\frac23n_j\ge (j+1)\frac23\cdot\frac{s-j}sn \ge 3n,$$
	contradicting the fact that $G$ has fewer than $3n$ edges.  The final inequality $(j+1)(s-j)\ge 9s/2$ holds because the left side 
	is concave in $j$ and the inequality does indeed hold at the endpoints, $4$ and $s-12$.  This bound on $\Delta(G_j)$ ensures that always 
	$G_j$ will have
	an independent set $I_j$ containing $w_j$ of size at least $1+\ceil{(n_j - (d_{G_j}(w_j)+1))/4}\ge \ceil{n_j/12}\ge \ceil{(n(s-j)/s)/12}
	\ge\ceil{n/s}\ge \floor{(n+j)/s}$, as desired.  Thus, there exists the independent set $I_j$ containing $w_j$.

	Now we must show that the algorithm halts with $j\le s-12$.  For brevity, let $H_j:=G_{j+s-40}$, let $\tilde{n}_j:=|H_j|$, and let 
	$\tilde{w}_j:=w_{j+s-40}$.  We just focus on the run of the algorithm starting with $H_0$ and show that if it does not halt by
	the time that we reach $H_{28}$, then the sum of degrees of $\tilde{w}_0,\ldots,\tilde{w}_{28}$, at the times they are deleted, 
	exceeds the number of edges in $H_0$, a contradiction.
	Intuitively, we are just trying to construct the final $40$ color classes in an equitable $s$-coloring.

	If we cannot equitably $40$-color $H_0$ by \Cref{planar-lem}, then this is because $d_{H_0}(\tilde{w}_0)\ge \frac{8}{40}\tilde{n}_0$.
	Similarly, if we cannot equitably $(40-4j)$-color $H_{4j}$, then $d_{H_{4j}}(\tilde{w}_{4j})\ge
	\frac{8-j}{4(10-j)}\tilde{n}_{4j}\ge \frac{8-j}{4(10-j)}\tilde{n}_{0}\frac{40-4j}{40} = \frac{8-j}{40}\tilde{n}_0$.
	Note that $d_{H_{4j-i}}(\tilde{w}_{4j-i})\ge d_{H_{4j}}(\tilde{w}_{4j})$ for all $j\in[7]$ and all $i\in [3]$.
	Thus, $|E(H_0)| \ge \sum_{j=0}^{28}d_{H_j}(\tilde{w}_j)\ge
	d_{H_0}(\tilde{w}_0) + \sum_{j=1}^{7}4d_{H_{4j}}(\tilde{w}_{4j}) \ge \tilde{n}_0\left(\frac8{40}+\sum_{j=1}^7\frac{8-j}{10}\right) 
	= \frac1{10}\tilde{n}_0\left(2 + 8(7)/2\right) = 3\tilde{n}_0$.
        So $H_0$ has at least $3\tilde{n}_0$ edges, which is too many since $H_0$ is planar with order~$\tilde{n}_0$. 
\end{proof}

It is an open question, posed in~\cite{ELM} and repeated in~\cite{KKX-survey,KOZ}, whether every planar graph has an equitable partition into $3$ forests.
If the answer is Yes, then we can strengthen \Cref{planar-lem}: If $G$ is planar and $\Delta(G)\le \frac{s-2}{3s}|G|$, then $G$ has an equitable $3s$-coloring.
Using this stronger lemma, we can repeat the proof of the Planar Theorem, but now weaken the hypothesis to $s\ge 30$.

Reusing the ideas in the previous proof, we can prove the following more general statement.

\begin{dthm}
	Fix $d$ a positive integer and $s\ge (2d+5)3^{d-1}$.  Let $G$ be $d$-degenerate.  Let $w_0,\ldots,w_{d-1}$ be the $d$ vertices of largest 
	degrees, breaking ties arbitrarily.  If there exist disjoint independent sets $I_0,\ldots,I_{d-1}$ in $G$ with $|I_j| = \floor{(n+j)/s}$ for
	all $j\in\{0,\ldots,d-1\}$ and $w_0,\ldots,w_d\in\cup_{j=0}^{d-1}I_j$, then $G$ has an equitable $s$-coloring.
\end{dthm}
\begin{proof}[Proof Sketch]
	We omit a formal proof, since all the important ideas appear in the previous proof.  We just comment on a few points.  Every $d$-degenerate
	graph has at most $dn$ edges.  By a result in~\cite{KNP}, every $d$-degenerate graph can be equitably partitioned into $3^{d-1}$ forests.
	To equitably $s$-color $G$, we again repeatedly delete an independent set (of the right size) containing a vertex of maximum degree.  
	When we have only $(2d+5)3^{d-1}$
	colors remaining, we try to partition into forests and finish by the Tree Theorem.  If we cannot, then we resume deleting independent sets
	(containing vertices of maximum degree) and try again with forests each time the number of remaining colors is a multiple of $3^{d-1}$.  If we 
	repeatedly fail with forests, then we eventually delete more than $d|H_0|$ edges from $H_0$, where $H_0$ is the remaining subgraph when 
	the number of remaining colors is $(2d+5)3^{d-1}$.  Recall that, by hypothesis, we have independent sets 
	$I_0,\ldots, I_{d-1}$.  Finding $I_d$ containing $w_d$ must be handled specially, akin to $I_2$ and $I_3$ in the previous proof.
	Thereafter, we maintain the invariant $\Delta(G_j)\le \frac{d}{d+1}|G_j|$.  This guarantees that $G_j$ has independent set 
	$I_j$ containing a vertex $w_j$ of maximum degree in $G_j$.
\end{proof}

The hypothesis on $I_0,\ldots,I_{d-1}$ in the previous theorem is necessary, as witnessed by the $d$-degenerate graph $K_d\vee (s^2+2s)K_1+(ds+d^2)K_1$.  
We can put any $d-1$ of its vertices with largest degrees into (correctly sized) color classes for an equitable $s$-coloring, but not all $d$ vertices 
with largest degrees.  Thus, the graph has no equitable $s$-coloring.
The main drawback of the previous theorem is the strong hypothesis on $s$.  This prompts the following question.

\begin{myques}
	In the previous theorem, can we weaken the hypothesis on $s$ to ``$s\ge f(d)$'' where $f(d)$ is polynomial in $d$, perhaps even linear?
\end{myques}

\section{Outerplanar Graphs}
\label{outerplanar-sec}
In this section, we begin our proof of the Main Theorem.  We start with the construction showing that the Main Theorem becomes false if we let $s:=3$.

\begin{construct}
\label{s=3-lemma}
	There is an infinite class $\mathcal{G}$ of outerplanar graphs such that, for every $G\in\mathcal{G}$, (i) $G$ is a near-triangulation, (ii) $\Delta(G)=5$, (iii) $\min_{v\in V(G)}\alpha_v \ge \floor{|G|/3}$, and (iv) $G$ has no equitable 3-coloring.
\end{construct}

\begin{proof}

A \textit{stalactite} is formed as follows; see Figure~\ref{counterexample1}. Starting from $K_{1,4}$, with leaves $x_1,\dots,x_4$ from left to right, for each $i\in[3]$ add edge $x_ix_{i+1}$ and add a new vertex $y_i$ and edges $y_ix_i$ and $y_ix_{i+1}$. 
Let $\varphi$ be an arbitrary coloring with colors in $[k]$. For each $j\in[k]$, denote by $\varphi^{-1}(j)$ the set of vertices colored $j$.

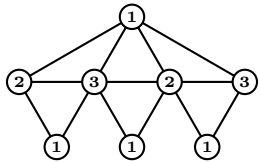
\begin{figure}[!h]
\centering
\begin{tikzpicture}[scale=1]%
\stalactite{-2}{2}{3}    
\end{tikzpicture}
\caption{A stalactite; equivalently, $G_1$ from the construction in Lemma~\ref{s=3-lemma}.}
\label{counterexample1}
\end{figure}

	Let $G_1$ be a stalactite. Clearly, $\Delta(G_1)=5$. Since $G_1$ is a near-triangulation, it has a unique $3$-coloring up to permuting colors. 
	Let $\varphi_1$ be the $3$-coloring of $G_1$ shown in Figure~\ref{counterexample1}. Since $|\varphi_1^{-1}(i)|\ge2=\lfloor{|G_1|}/3\rfloor$ for each 
	$i\in[3]$, we have $\min_{v\in V(G_1)}\alpha_v\ge\floor{|G_1|/3}$. Moreover, 
	$|\varphi_1^{-1}(1)|=4={|G_1|}/{2}$ and $|\varphi_1^{-1}(2)|=|\varphi_1^{-1}(3)|=
	2={|G_1|}/{4}$. So $\varphi_1$ is not equitable. Hence, $G_1$ satisfies (i)--(iv).

For each $i\ge2$, form $G_i$ from $G_{i-1}$ by merging two disjoint stalactites $H_1$ and $H_2$ with $G_{i-1}$ as follows; see Figure~\ref{counterexample2}. 
To merge $H_1$ with $G_{i-1}$, introduce a new disjoint edge $v_1w_1$. Join $v_1$ (resp.~$w_1$) to the rightmost vertex (resp.~two rightmost vertices) 
of $H_1$; similarly, join $v_1$ (resp.~$w_1$) to the leftmost vertex (resp.~two leftmost vertices) of $G_{i-1}$.
Analogously, merge $H_2$ and $G_{i-1}$ by introducing edge $v_2w_2$, with endpoints adjacent to the rightmost vertices in $G_{i-1}$ and adjacent to
the leftmost vertices in $H_2$.
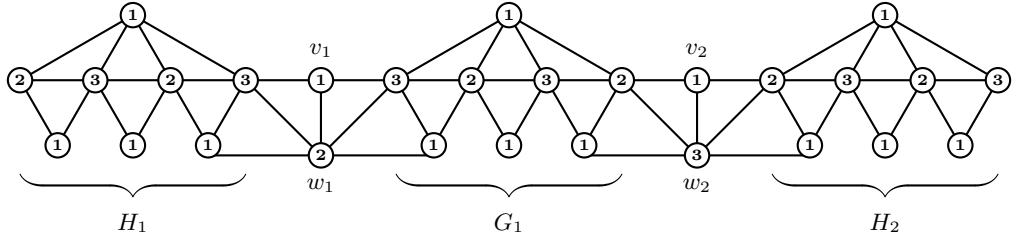
\begin{figure}[!t]
\centering
\begin{tikzpicture}[scale=1]
\draw[thick] (2,0) -- (3,0) -- (4,0) (2,0) -- (3,-1) -- (4,0) (1.5,-1) -- (3,-1) -- (4.5,-1) (7,0) -- (8,0) -- (9,0) (7,0) -- (8,-1) -- (9,0) (6.5,-1) -- (8,-1) -- (9.5,-1); 
\stalactite{-1}{2}{3} 
\stalactite{4}{3}{2}
\stalactite{9}{2}{3}
	\draw[thick] (3,0) node[ubStyle] {\tiny{\bf1}} -- (3,-1) node[ubStyle] {\tiny{\bf2}} (8,0) node[ubStyle] {\tiny{\bf1}} -- (8,-1) node[ubStyle] {\tiny{\bf3}} (3,0.4) node {\footnotesize{$v_1$}} (3,-1.4) node {\footnotesize{$w_1$}} (8,0.4) node {\footnotesize{$v_2$}} (8,-1.4) node {\footnotesize{$w_2$}} (0.5,-1.9) node {\footnotesize{$H_1$}} (5.5,-1.9) node {\footnotesize{$G_1$}} (10.5,-1.9) node {\footnotesize{$H_2$}};
\draw[thick, decorate, decoration={calligraphic brace, mirror, amplitude=3mm}] (-1,-1.25) -- (2,-1.25);
\draw[thick, decorate, decoration={calligraphic brace, mirror, amplitude=3mm}] (4,-1.25) -- (7,-1.25);
\draw[thick, decorate, decoration={calligraphic brace, mirror, amplitude=3mm}] (9,-1.25) -- (12,-1.25);
\end{tikzpicture}
\caption{Constructing $G_2$ from $G_1$ in the proof of Lemma~\ref{s=3-lemma}.}
\label{counterexample2}
\end{figure}

	Clearly, $\Delta(G_i)=5$ and $G_i$ is a near-triangulation. Note that $|G_i|=|G_{i-1}|+20$. Let $\varphi_{i-1}$ be the unique 3-coloring (up to 
	renaming colors) of $G_{i-1}$ that uses color $1$ most often. Now $\varphi_{i-1}$ extends to a unique 3-coloring $\varphi_i$ of $G_i$; 
	see Figure~\ref{counterexample2}. So color class 1 grows by 10, and color classes 2 and 3 each grow by 5.  That is, 
	$|\varphi_i^{-1}(1)|=|\varphi_{i-1}^{-1}(1)|+10={|G_i|}/{2}$ and 
	$|\varphi_i^{-1}(j)|=|\varphi_{i-1}^{-1}(j)|+5={|G_i|}/{4}$ for each $j\in\{2,3\}$. Thus, $\varphi_i$ is not equitable.

	We now show that $\min_{x\in V(G_i)}\alpha_x\ge\floor{|G_i|/3}$. Note that $|\varphi_i^{-1}(1)|={|G_i|}/{2}>\lfloor{|G_i|}/{3}\rfloor$. 
	Moreover, for each vertex $x\in\varphi_i^{-1}(2)\cup \varphi_i^{-1}(3)$, we have $|N(x)\cap \varphi_i^{-1}(1)|\le3$. 
	Let $S:=\{x\}\cup\varphi_i^{-1}(1)\setminus N(x)$. 
	Note that $S$ is an independent set and $|S|\ge1+{|G_i|}/{2}-3\ge\floor{{|G_i|}/{3}}$ since $|G_i|\ge 12$ whenever $i\ge2$. 
	Thus, $\min_{x\in V(G_i)}\alpha_x\ge\floor{|G_i|/3}$. 
\end{proof}
\subsection{Proof Overview}
\label{proof-overview-sec}
The proof of the Main Theorem is somewhat involved.  So here we first present an overview.
We typically refer to an ``equitable $s$-coloring''.  Our goal is to prove the following statement, for all $s\ge 6$: If $G$ is an outerplanar graph 
with $\alpha_v(G)\ge \floor{n/s}$ for all $v\in V(G)$, then $G$ has an equitable $s$-coloring.  The first step is to prove that if the statement holds for 
$s-1$, then it also holds for $s$.  This is straightforward to show for $s\ge 6$, as we do in \Cref{reduce-to-small-s-lem}.  The reduction also holds for 
$s=5$, but the proof is harder, so we defer it to the appendix since it is not needed to prove the Main Theorem.  
Thus, the focus of the paper is proving the statement when $s=6$.

A \emph{forest equipartition} for $G$ is a partition $F_1\uplus F_2$ of $V(G)$ such that $||F_1|-|F_2||\le 1$ and $G[F_i]$ is a forest, for each 
$i\in[2]$.  For brevity, when the meaning is clear, we write $F_i$ both for the vertex set and for the subgraph it induces.
Pemmaraju (in an unpublished manuscript~\cite{pemmaraju-manuscript}; see Theorem~9 of~\cite{pemmaraju-extends}) showed that if $G$ is outerplanar 
with $\Delta(G)\le n/6$, then $G$ has a forest equipartiton $F_1\uplus F_2$.  His idea was as follows.  
By the Tree Theorem, we 
equitably $3$-color $F_1$ with colors $1,2,3$; similarly, we equitably $3$-color $F_2$ with colors $4,5,6$.  Together these give an equitable
$6$-coloring of $G$.  Pemmaraju's manuscript is not widely available, and we have not seen it; but his equipartitioning result is easy to recreate, as
we describe below.  In fact, we significantly strengthen it, by allowing a much larger maximum degree~of~$G$.

Why did Pemmaraju require that $\Delta(G)\le n/6$? Because this implies $\Delta(F_i)\le \Delta(G)\le n/6\le (|F_i|+1)/3$.
Now, since $F_i$ is $2$-colorable, we have $\alpha_v(F_i)\ge 1+(|F_i|-(d_{F_i}(v)+1))/2 = (|F_i|+1-d_{F_i}(v))/2 \ge (|F_i|+1)/3$.
So $F_i$ is indeed equitably $3$-colorable by the Tree Theorem.  

We strengthen the above equipartitioning result to allow $\Delta(G)\le 2\ceil{n/6}+3$;
that is, the maximum degree can now be more than double what was allowed previously.  To do so, we ensure that for each $v\in F_i$ the forest $F_i$
contains at most half of the edges incident with $v$ in $G$.  To get better control on $d_{F_i}(v)$ we want $G$ to be a maximal outerplanar graph.
This maximality does not improve our degree bounds in $G$, but it does tell us the precise structure of $G$ at certain ``ends'' (leaves in the weak 
planar dual) that we use to proceed by induction.  

So we first add edges, but no vertices, to $G$ until we reach a maximal outerplanar graph $G'$.  The key is to do this 
while maintaining that $\Delta(G')\le 2\ceil{n/6}+3$.  We can nearly do so (at most $2$ degrees may increase to $2\ceil{n/6}+4$), but the proof is 
detailed.  Once we have $G'$, we show how to partition $V(G')$ as $F_1\uplus F_2$ so that $d_{F_i}(v)\le\ceil{n/6}+1$ for each
$i\in [2]$ and for all $v\in F_i$.  We call this result the \emph{Partitioning Lemma}; see~\Cref{partitioning-lem} below.
(After proving the Partitioning Lemma, we sketch a short proof of Pemmaraju's weaker variant.)

As a warmup, we initially prove the Main Theorem when $s\ge 8$, assuming the Partitioning Lemma.
Then, after proving the Partitioning Lemma, we use it to prove the Main Theorem when $s\ge 6$.
We call a vertex $w$ in a graph $G$ \emph{dangerous} if $d_G(w)\ge 2\ceil{n/6}+4$.  A straightforward counting argument (see \Cref{few-big-lem})
shows that each outerplanar graph has at most $2$ dangerous vertices.  It is easy to verify that the Partitioning Lemma and the Tree Theorem combine 
to handle the case when the number of dangerous vertices in $G$ is $0$.  
So we need only consider the cases that this number is $2$ or $1$.

Suppose that $G$ has exactly $2$ dangerous vertices, say $w_1$ and $w_2$ (\Cref{two-dangerous-sec}).
Now we find $2$ independent sets, say $I_1$ and $I_2$, in $N[w_1]\cup N[w_2]$ and with sizes $\ceil{n/6}$ and $\ceil{(n-1)/6}$ such that 
$w_1\in I_1$ and $w_2\in I_2$; we color $I_1$ and $I_2$ with, respectively, colors $5$ and $6$.  We $3$-color, with colors $1,2,3$, a subgraph $G'$ 
of $G$ with order about $n/3$ that contains all of $G\setminus(N[w_1]\cup N[w_2])$, and finally we extend this partial $5$-coloring to an equitable 
$6$-coloring.

Suppose instead that $G$ has exactly $1$ dangerous vertex, $w$ (\Cref{one-dangerous-sec}).  Now we show that one of two strategies works.
Either we explicitly construct the equitable $6$-coloring, or else we again find a forest equipartition, being careful to ensure that
the forest $F_1$ containing $w$ still satisfies $\alpha_{F_1}(w)\ge \floor{|F_1|/3}$.  Thus, we can again finish by $2$ applications of the Tree Theorem.

\subsection{Proving the Main Theorem when \texorpdfstring{$s\ge 8$}{s >= 8}}
\label{s=8-sec}
Here we prove the Main Theorem when $s\ge 8$.  
We often implicitly use the following easy observation.

\begin{prop}
\label{linear-forest}
If $G$ is outerplanar, then $G[N(v)]$ is a disjoint union of paths, for all $v\in V(G)$.
\end{prop}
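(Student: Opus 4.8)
The plan is to use two basic facts about outerplanar graphs: an outerplanar graph contains no $K_4$ and no $K_{2,3}$ as a subgraph (indeed, not even as a minor). Fix $v\in V(G)$ and write $H:=G[N(v)]$. First I would argue that $\Delta(H)\le 2$, so that $H$ is a disjoint union of paths and cycles. Suppose some $u\in N(v)$ has three neighbors $x_1,x_2,x_3$ inside $N(v)$. Then $v$ is adjacent to $u,x_1,x_2,x_3$ and $u$ is adjacent to $x_1,x_2,x_3$, so $\{v,u,x_1,x_2,x_3\}$ contains a $K_{2,3}$ (with parts $\{v,u\}$ and $\{x_1,x_2,x_3\}$), contradicting outerplanarity. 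Hence every vertex of $H$ has degree at most $2$ in $H$.

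It remains to rule out cycles in $H$. Suppose $H$ contains a cycle $C$. If $C$ is a triangle $x_1x_2x_3$, then together with $v$ we get $K_4$ on $\{v,x_1,x_2,x_3\}$, again contradicting outerplanarity. If $C$ has length at least $4$, pick two nonadjacent vertices $x,y$ on $C$; the two arcs of $C$ between $x$ and $y$ give two internally disjoint $x$–$y$ paths, and the path $x$–$v$–$y$ gives a third, all internally disjoint, so $G$ contains a $K_{2,3}$ subdivision (equivalently a $K_{2,3}$ minor) with branch vertices $x$ and $y$ — once more contradicting outerplanarity. Therefore $H$ is acyclic with maximum degree at most $2$, i.e.\ a disjoint union of paths.

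I do not expect any real obstacle here; the only point requiring a line of care is the cycle-of-length-$\ge 4$ case, where one should phrase the conclusion as a forbidden $K_{2,3}$ \emph{minor} (or subdivision) rather than subgraph, since the relevant $x$–$y$ paths need not have length one. Alternatively, and perhaps more cleanly for a self-contained argument, one can invoke a fixed outerplanar embedding of $G$: the neighbors of $v$ appear in a linear order $u_1,\dots,u_d$ along the outer boundary as seen from $v$, and any edge $u_iu_j$ of $H$ together with $v$ bounds a region that, by outerplanarity, can contain no vertex of $N(v)$ in its interior; this forces $j=i+1$, so $H\subseteq u_1u_2\cdots u_d$, a single path, and hence $H$ is a disjoint union of subpaths of it. Either formulation gives the claim.
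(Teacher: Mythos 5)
Your proof is correct and takes essentially the same approach as the paper: bound the degree in $G[N(v)]$ by $2$ via a forbidden $K_{2,3}$, and exclude cycles via forbidden minors. The only cosmetic difference is that the paper handles all cycles at once by noting that a cycle in $N(v)$ together with $v$ yields a $K_4$-minor, whereas you split into the triangle case ($K_4$ subgraph) and the longer-cycle case ($K_{2,3}$ subdivision); both variants are fine.
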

\begin{proof}
If $G[N(v)]$ contains a cycle, then $G[N[v]]$ contains a $K_4$-minor, contradicting that $G$ is outerplanar.  So it suffices to show that $G[N(v)]$ has maximum degree at most $2$.
Suppose, to the contrary, that there exist $w,x_1,x_2,x_3\in N(v)$ with $x_1,x_2,x_3 \in N(w)$.  Now $G[\{v,w,x_1,x_2,x_3\}]$ contains $K_{2,3}$, again contradicting that $G$ is outerplanar.
\end{proof}
	
To prove the Main Theorem for all $s$ at least $6$ (resp.~$8$), we first reduce to the case $s=6$ (resp.~$s=8$).  
This reduction is handled by the following lemma.
\begin{lem}
\label{reduce-to-small-s-lem}
\label{lem1}
	Fix an integer $s$ with $s\ge 6$. If $G'$ has an equitable $(s-1)$-coloring whenever $G'$ is outerplanar and 
	$\min_{w\in V(G')}\alpha_{w}(G')\ge \floor{|G'|/(s-1)}$,
	then $G$ has an equitable $s$-coloring whenever $G$ is outerplanar and $\min_{x\in V(G)} \alpha_x(G)\ge \floor{|G|/s}$.
	Thus, for each integer $s_0$ with $s_0\ge 5$, to prove the Main Theorem whenever $s\ge s_0$, it suffices to prove it when $s=s_0$.
\label{reduc-to-small-lem}
\end{lem}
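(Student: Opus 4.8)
The plan is to take an equitable $s$-coloring of $G'$ and delete one color class in order to produce, after suitable modification, an equitable $(s-1)$-coloring of some $G'$ to which the hypothesis applies — but run backwards. More precisely, given $G$ outerplanar with $\min_{x}\alpha_x(G)\ge\floor{n/s}$, I would first select an independent set $I$ in $G$ to serve as a color class of size $\floor{n/s}$ or $\ceil{n/s}$, set $G':=G-I$, verify that $G'$ is still outerplanar (trivially, since deleting vertices preserves outerplanarity) and that $\min_{w\in V(G')}\alpha_w(G')\ge\floor{|G'|/(s-1)}$, then apply the hypothesis to get an equitable $(s-1)$-coloring of $G'$ and use $I$ as the $s$-th color class. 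The arithmetic part is routine: if $|I|=\floor{n/s}$ then $|G'|=n-\floor{n/s}$, and one checks $\floor{|G'|/(s-1)}\le\floor{n/s}$ so that the $(s-1)$ classes of $G'$, each of size $\floor{|G'|/(s-1)}$ or $\ceil{|G'|/(s-1)}$, together with $I$, form an equitable $s$-coloring of $G$; one may need to choose $|I|\in\{\floor{n/s},\ceil{n/s}\}$ according to the residue of $n$ mod $s$ to make the class sizes line up exactly.

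The real content is therefore: \emph{find an independent set $I$ of the prescribed size such that $G-I$ still satisfies the $\alpha$-hypothesis for $s-1$}, i.e. such that every remaining vertex $w$ still lies in an independent set of $G-I$ of size at least $\floor{(n-|I|)/(s-1)}$. The danger is a vertex $w$ whose only large independent sets in $G$ all heavily intersect $I$; so $I$ must be chosen to avoid "using up" the independent sets that witness $\alpha_w$ for the critical vertices. I expect the key step to be showing that one can pick $I$ greedily or from a maximum independent set so that for each $w$, deleting $I$ decreases $\alpha_w$ by less than it decreases the target $\floor{\cdot/(s-1)}$ versus $\floor{\cdot/s}$ — the slack being roughly $n/s - n/s\cdot\frac{s-1}{s}\cdot\frac{1}{s-1}$-type quantities, which is where the hypothesis $s\ge 6$ should enter (for small $s$ the slack is too thin, matching the remark that the $s=5$ case is harder and deferred). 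Concretely I would take $I$ to be a largest independent set containing a vertex of maximum degree, or more carefully, use \Cref{linear-forest} and outerplanarity to bound how many vertices of any fixed independent set $J$ can lie in $N(w)$, so that $J\setminus I$ is still large.

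The anticipated main obstacle is exactly this: guaranteeing simultaneously, for \emph{every} vertex $w\notin I$, that $\alpha_w(G-I)\ge\floor{(n-|I|)/(s-1)}$. The worst case is a low-$\alpha$ vertex $w$ with $\alpha_w(G)=\floor{n/s}$ exactly, whose witnessing independent set $J$ we must not destroy. I would handle this by choosing $I$ disjoint from (or nearly disjoint from) such a $J$: since $G$ is outerplanar, $w$ has at most two neighbors in any induced path, and one can argue that a suitable maximum-size independent set $I$ can be taken to contain $w$ itself or to miss $J$ almost entirely, so that $|J\setminus I|\ge |J|-O(1)\ge\floor{n/s}-O(1)\ge\floor{(n-|I|)/(s-1)}$ once $s\ge 6$ makes the gap $\floor{n/s}-\floor{(n-\floor{n/s})/(s-1)}$ large enough to absorb the additive loss. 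The bookkeeping — tracking floors, the residue of $n$ mod $s$, and which of $\floor{n/s}$ or $\ceil{n/s}$ to use for $|I|$ — is where most of the (routine) work lies, but the conceptual crux is the robustness of the $\alpha$-condition under deleting one color class, and that is precisely the statement I would prove as the heart of the lemma.
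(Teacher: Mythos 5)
Your skeleton (delete an independent set $I$ of size $\floor{n/s}$, apply the $(s-1)$-hypothesis to $G-I$, use $I$ as the last class) is the right one, and the combination arithmetic is routine as you say (in fact $|I|=\floor{n/s}$ always works). But the heart of your argument — guaranteeing $\alpha_w(G-I)\ge\floor{(n-|I|)/(s-1)}$ for every remaining $w$ — rests on slack that does not exist. Write $n=qs+r$ with $0\le r\le s-1$. Then $\floor{n/s}=q$ while $\floor{(n-\floor{n/s})/(s-1)}=q+\floor{r/(s-1)}\ge q$, so the gap $\floor{n/s}-\floor{(n-\floor{n/s})/(s-1)}$ that you want to ``absorb the additive loss'' is never positive, and equals $-1$ when $r=s-1$ (your claim that $\floor{|G'|/(s-1)}\le\floor{n/s}$ has the inequality backwards). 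Hence a vertex $w$ with $\alpha_w(G)=\floor{n/s}$ exactly cannot afford to lose even one vertex of its witnessing set to $I$; and when $r=s-1$, preserving the witness entirely is still not enough, since the new target strictly exceeds the old one and $\alpha_w(G-I)\le\alpha_w(G)$. No value of $s\ge 6$ repairs this: the hypothesis on $s$ does not enter through floor bookkeeping at all.

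The missing idea, which is how the paper proceeds, is to abandon the original witnesses and re-derive $\alpha_w(G')$ intrinsically in $G':=G-I_v$, where $I_v$ has size $\floor{n/s}$ and contains a vertex $v$ of \emph{maximum degree}. For any $w\in V(G')$, $3$-colorability of the outerplanar graph $G'$ gives $\alpha_w(G')\ge 1+\ceil{(n'-d(w)-1)/3}$, and a direct computation shows this already meets the target $\floor{n'/(s-1)}$ unless $d(w)\ge n/3+4$; this is precisely where $s\ge 6$ is used. For such a high-degree $w$, the payoff of putting a maximum-degree vertex into the deleted class is that $N(v)\subseteq V(G')$ survives intact (as $I_v$ is independent and contains $v$); since $G$ has no $K_{2,3}$ we have $|N(v)\cap N[w]|\le 3$, and by \Cref{linear-forest} $G[N(v)]$ is a disjoint union of paths, so $w$ together with an independent half of $N(v)\setminus N[w]$ yields $\alpha_w(G')\ge 1+(d(v)-3)/2\ge n/6+3/2$, which suffices (with a short separate check when $s=6$, using $n'\le 5n/6+1$). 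You do mention choosing $I$ around a maximum-degree vertex and invoking \Cref{linear-forest}, but you aim these at protecting a fixed witness $J$ near $N(w)$ rather than at building a fresh large independent set out of $N(v)$; without that step the verification of the $(s-1)$-hypothesis — the actual content of the lemma — is not established.
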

\begin{proof}
To begin, we prove the second statement, assuming the first.
We use induction on $s-s_0$; the base case, $s-s_0 = 0$, holds by assumption.
The induction step, when $s -s_0\ge 1$, holds by the first statement.  

Now we prove the first statement.
Let $v$ be a vertex of maximum degree.
Let $I_v$ be an independent set containing $v$ of size $\lfloor n/s\rfloor$; such a set exists by hypothesis.
Let $G':=G-I_v$ and let $n':=|G'|$.  For brevity, we let $\alpha'_{w}:=\alpha_{w}(G')$.
Recall, from discussion following~Question~\ref{KKX-ques}, that the condition $\alpha_w\ge \floor{n/s}$ is equivalent to the condition $s\ge\ceil{(n+1)/(\alpha_w+1)}$.
For every $w\in V(G')$ we get $\lceil\frac{n'+1}{\alpha'_w+1}\rceil \le 
\lceil\frac{n-\lfloor n/s\rfloor+1}{\lceil(n-\lfloor n/s\rfloor -d(w)-1)/3\rceil+2}\rceil\le \lceil\frac{n-n/s+2}{(n-n/s -d(w)-1)/3+2}\rceil$. 
We want to show, for every $w\in V(G')$, that this final term is at most $s-1$.  
So assume not. 

Now by (tedious, but straightforward) algebra, provided below, we 
get $d(w) > (n(s-5+4/s)+{5s-11})/(s-1) = n(s-5+4/s)/(s-1)+5-6/(s-1)$.  
We next show that $d(w) \ge n/3+4$.
If $s\ge 7$, then $d(w) > n(7-5)/(7-1)+4=n/3+4$. 
And if $s=6$, then $d(w) > n(6-5+4/6)/(6-1) + 19/5 = n/3+19/5$; but since $d(w)$ is an integer, this implies $d(w)\ge n/3+4$.  
In both cases $d(w) \ge n/3+4$, as desired.
But recall that $d(v) =\Delta (G) \ge d(w)$.
Note that $N_G(v)\subseteq V(G')$.
We would like to find a big independent set in $G'$ that contains $w$; so we look in $\{w\}\cup N(v)$.  Since $G$ is outerplanar, it contains no copy of $K_{2,3}$; thus $|N(v)\cap N[w]|\le 3$.  Hence, by \Cref{linear-forest} vertex
$w$ appears in an independent set (in $G'$) of size at least $1+(d(v)-3)/2$; that is, $\alpha'_w\ge 1 + (d(v)-3)/2 = (d(v)-1)/2 \ge (n/3+3)/2 = n/6+3/2$.  If $s\ge 7$, then
$\lceil\frac{n'+1}{\alpha'_w+1}\rceil\le \lceil\frac{n}{n/6}\rceil = 6$, as desired.
And if $s = 6$, then $n' = n-\lfloor n/6\rfloor \le 5n/6+1$.
So
$\lceil\frac{n'+1}{\alpha_w'+1}\rceil\le \lceil\frac{5n/6+2}{n/6+5/2}\rceil \le 5$, as desired.
Thus, by hypothesis, $G'$ has an equitable $(s-1)$-coloring.  
Combining this with a new color on $I_v$ gives an equitable $s$-coloring of $G$.

Below are the details of the ``tedious, but straightforward'' algebra mentioned above. 
\begin{align*}
\frac{n-n/s+2}{(n-n/s-d(w)-1)/3+2} &> s-1\\
n - n/s+2 &> 2s-2 + \frac13(ns-n-n+n/s-sd(w)+d(w)-s+1)\\
3n-3n/s+6 &> 6s-6 + ns -2n + n/s -sd(w) + d(w)-s+1\\
(s-1)d(w) &> n(s-5+4/s)+ {~5s-11}\\
d(w) &> \frac{n(s-5+4/s)+{5s-11}}{s-1}
\end{align*}
\aftermath
\end{proof}

	To prove \Cref{main-thm-8}, we need the Partitioning Lemma, below.  We defer its proof to \Cref{partitioning-sec}.

\begin{lem}[Partitioning Lemma]
	If $G$ is outerplanar, then $G$ has a forest equipartition $F_1\uplus F_2$, where 
	$d_{F_i}(v)\le \max\{\ceil{n/6}+1,\floor{d_G(v)/2}\}$ for all $i\in[2]$ and all $v\in F_i$.
\end{lem}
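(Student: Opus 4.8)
The plan is to reduce to the case where $G$ is a \emph{maximal} outerplanar graph and then exploit its weak-dual tree. First, we may assume $G$ is connected: otherwise apply the lemma to each component $C$ (note $|C|\le n$ and $d_C(v)=d_G(v)$) and merge the partitions, choosing for each component which of its two parts to place in $F_1$ so that the running value of $|F_1|-|F_2|$ stays in $\{-1,0,1\}$; a vertex-disjoint union of forests is a forest and the degree condition is local, so this is legitimate. Second, enlarging $G$ to an outerplanar supergraph $G^*$ on the same vertices only increases the relevant induced degrees, while the bound refers to $d_G(v)$; so it suffices to partition a maximal outerplanar $G^*\supseteq G$, provided we add chords carefully enough that $\floor{d_{G^*}(v)/2}$ never exceeds $\max\{\ceil{n/6}+1,\floor{d_G(v)/2}\}$ for any $v$. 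Concretely, triangulate face by face; when a non-triangular face contains a vertex already at its ``budget'', fan-triangulate that face from a vertex far from it, and an Euler-type global count shows the total surplus degree created is small and is confined, except in at most two cases, to vertices whose degree in $G$ was already large. This bookkeeping is routine but lengthy, and I expect it to be one of the two main technical hurdles.

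For a maximal outerplanar $G^*$, its weak dual $T$ is a tree on the $n-2$ triangular faces; for $n\ge 4$, $T$ has at least two leaves, each corresponding to an \emph{ear} --- a triangle $\{a,u,b\}$ with $d_{G^*}(u)=2$ and $ab$ a chord. I would induct on $n$, checking the few small cases by hand and peeling an ear $u$ to get the maximal outerplanar graph $G^*-u$ on $n-1$ vertices, in which $ab$ is now an outer edge. To control what happens when $u$ is put back, I would strengthen the induction hypothesis so that it can be invoked on $G^*-u$ with a prescribed colour pattern on a designated outer edge: for every maximal outerplanar $H$ and outer edge $xy$, $H$ has a balanced forest equipartition meeting the degree bound in which $x$ and $y$ get distinct colours. (This is consistent because $xy$ lies on a single triangle, so forbidding a monochromatic $\{x,y\}$ never forces a monochromatic triangle.) Applied to $G^*-u$ with ``$a,b$ distinct'', this makes $u$ \emph{free}: we may place it in either class, so we put it in the smaller one to restore the size invariant, and since $a,b$ have opposite colours, adding $u$ raises the own-colour degree of just one of them; the path structure of $G^*[N(\cdot)]$ (\Cref{linear-forest}) together with the slack in the $\ceil{n/6}+1$ term keeps that vertex under budget.

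The hard part will be maintaining the per-vertex degree bound across the induction, the second main technical hurdle: adding $u$ can nudge a neighbour's own-colour degree up by one, so the induction hypothesis must also track --- and sometimes actively rebalance --- the own-colour loads of the boundary vertices near the current end, not just their colours; when no ear peels cleanly, maximality pins down the local structure (a short ``tail'' or ``fan'' of triangles) tightly enough to remove it wholesale and argue directly. It is worth noting why the bound takes the form $\max\{\ceil{n/6}+1,\floor{d_G(v)/2}\}$: in a maximal outerplanar graph the forest condition \emph{by itself} already forces $d_{F_i}(v)\le\lceil d_{G^*}(v)/2\rceil$, since $G^*[N(v)]$ is a path and a forest-inducing class meets it in an independent set; the real content is improving the odd case to $\floor{\cdot/2}$ exactly where it matters (the high-degree vertices), accepting the coarser $\ceil{n/6}+1$ cap elsewhere, and doing so while forcing the two classes to differ by at most one. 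Each of ``proper'', ``balanced'', and ``half-degree everywhere'' is easy alone; the length of the argument comes entirely from their interaction.
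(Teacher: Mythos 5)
Your overall architecture (triangulate with degree control, then induct on the structure of a maximal outerplanar graph read off its weak dual) matches the paper's, but your inductive engine has a genuine gap. You peel a single ear $u$ and rely on a strengthened hypothesis that prescribes distinct classes on \emph{one} designated outer edge. To prove that statement for $H$ with designated edge $xy$, you must apply it to $H-u$ and get \emph{two} things at once: distinct classes on the new outer edge $ab$ supporting the ear (so that $u$ is ``free'' and can be sent to the smaller side for balance), and distinct classes on $xy$ (to deliver the conclusion you are inducting on). One designated edge cannot serve both, and if $a,b$ happen to land in the same class then $u$ is forced into the opposite class and the size invariant can break by $2$. Relatedly, the half-degree bookkeeping does not survive a single-ear peel: in $H-u$ the attachment vertex has degree $d_H(a)-1$, so the inductive bound plus the one new edge gives $d_{F_i}(a)\le (d_H(a)+1)/2$, an overshoot of $1/2$ per peel that can accumulate at a fan apex; the one free choice (which side receives $u$) is already spent on balance, so it cannot also protect $a$. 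You flag exactly this (``track and sometimes actively rebalance the own-colour loads'') but defer it, and it is the crux, not a routine refinement.

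The paper resolves both problems by changing the reducible configuration: every maximal outerplanar graph on at least $4$ vertices contains either a $3$-vertex with a $2$-neighbor or a $4$-vertex with two $2$-neighbors, and for $|H|\ge 5$ such a configuration can be found avoiding both endpoints of any specified edge $e$. Peeling $2$ (or $3$) vertices at once lets one vertex go to each side (balance is automatic, no color prescription on a chord is needed), and each attachment vertex loses at least two incident edges while regaining at most one, so the exact bound $d_{F_i}(v)\le d_H(v)/2$ is preserved, with the stronger $(d_H(w)-1)/2$ at the endpoints of $e$. That last refinement is what absorbs the triangulation step, which is also not ``routine'': the paper must show one can complete $G$ to a maximal outerplanar $G'$ raising degrees above $2\ceil{n/6}+3$ at no more than two vertices, each by at most $1$, and that if both are raised to the extreme they are adjacent--so that taking $e=w_1w_2$ in the partition step compensates for the added edge. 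Your Euler-count sketch of the triangulation and your slack-in-$\ceil{n/6}+1$ argument for the degree accounting would need to be replaced by (or rebuilt into) mechanisms of this kind before the proof closes.
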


\begin{thm}
	If $G$ is outerplanar with $\min_{v\in V(G)} \alpha_v \ge \floor{n/8}$, then $G$ has an equitable $8$-coloring.  So for each $s\ge 8$,
	if $G$ is outerplanar with $\min_{v\in V(G)} \alpha_v \ge \floor{n/s}$, then $G$ has an equitable $s$-coloring.
	\label{main-thm-8}
\end{thm}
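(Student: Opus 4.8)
The plan is to first invoke \Cref{reduce-to-small-s-lem} to reduce to the single case $s=8$, and then to split into cases according to the number of \emph{dangerous} vertices of $G$, where $w$ is dangerous if $d_G(w)\ge 2\ceil{n/6}+4$. I will use the fact (a short counting lemma, in the spirit of the edge counts in the Planar Theorem) that an outerplanar graph has at most two dangerous vertices, so there are three cases: zero, one, and two dangerous vertices. I will also tacitly assume $n$ is larger than a small absolute constant, since for small $n$ the hypothesis $\min_v\alpha_v\ge\floor{n/8}$ makes the colouring easy to produce directly.

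If $G$ has no dangerous vertex, I would apply the Partitioning Lemma to obtain a forest equipartition $F_1\uplus F_2$. For each $v\in F_i$, since $v$ is not dangerous we have $\floor{d_G(v)/2}\le\ceil{n/6}+1$, so $d_{F_i}(v)\le\ceil{n/6}+1$; and because $F_i$ is a forest, deleting $N_{F_i}[v]$ and $2$-colouring the rest gives $\alpha_v(F_i)\ge 1+\ceil{(|F_i|-d_{F_i}(v)-1)/2}\ge\ceil{|F_i|/4}$, the last step being a one-line computation from $|F_i|\ge\floor{n/2}$. Hence the (forest form of the) Tree Theorem equitably $4$-colours each $F_i$; using colours $\{1,2,3,4\}$ on $F_1$ and $\{5,6,7,8\}$ on $F_2$ and using $||F_1|-|F_2||\le 1$ gives an equitable $8$-colouring. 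This also explains the exact threshold in the definition of ``dangerous'': it is the largest degree for which the Partitioning Lemma's degree bound still lets the Tree Theorem $4$-colour both forests.

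The real content is the case of one or two dangerous vertices $w_1$ (and $w_2$). Here the Partitioning Lemma alone fails at a dangerous vertex $w$, since there its degree guarantee is only $\floor{d_G(w)/2}$, which can be near $n/2$ and far too large for the Tree Theorem in the forest containing $w$. When there are two dangerous vertices I plan to spend one colour class on each: by \Cref{linear-forest}, each $G[N(w_i)]$ is a disjoint union of paths, and since $G$ has no $K_{2,3}$, the sets $N(w_2)\setminus N(w_1)$ and $N(w_1)\setminus N(w_2)$ each still have more than $\ceil{n/8}$ vertices and so contain independent sets $A_1,A_2$ of size more than $\ceil{n/8}$. I would then form disjoint independent sets $I_1:=\{w_1\}\cup A_1'$ and $I_2:=\{w_2\}\cup A_2'$ (with $A_i'\subseteq A_i$) of exactly the sizes an equitable $8$-colouring forces, colour them with colours $7$ and $8$, and colour $G-(I_1\cup I_2)$ with the remaining six colours by a ``core-and-collar'' argument mirroring the $s=6$ proof: a subgraph of order about $n/3$ containing all of $G\setminus(N[w_1]\cup N[w_2])$ is coloured first (this is where the Tree Theorem re-enters), and the leftover vertices, which all lie in $N(w_1)\cup N(w_2)$ and hence induce a graph of maximum degree at most $4$, are then coloured to fill the classes out to size $\floor{n/8}$ or $\ceil{n/8}$. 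When there is a single dangerous vertex $w_1$ I would not reserve a colour class, but instead argue a dichotomy as for $s=6$: either construct the equitable $8$-colouring explicitly, or produce a forest equipartition $F_1\uplus F_2$ of $G$ with $w_1\in F_1$ in which $\alpha_{w_1}(F_1)\ge\ceil{|F_1|/4}$ still holds---achievable by steering almost all of $N(w_1)$ into $F_2$, which is legitimate because $G[N(w_1)]$ is a linear forest---and then finish with two applications of the Tree Theorem.

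I expect the main obstacle to be exactly this last piece of bookkeeping in the dangerous cases: choosing $I_1$, $I_2$, and the ``core'' subgraph so that all eight colour classes come out with size $\floor{n/8}$ or $\ceil{n/8}$, while simultaneously checking that the low-degree ``collar'' $(N(w_1)\cup N(w_2))\setminus(I_1\cup I_2\cup\text{core})$ admits a proper colouring with the colours still available that does not overflow any class. The collar has bounded maximum degree but no independence guarantee of its own, so balancing it against the colour classes that are already committed is the delicate step, and it is where the $s=8$ argument genuinely exceeds the effort of the no-dangerous-vertex case; it also previews the heavier machinery that will be needed for $s=6$.
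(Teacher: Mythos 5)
Your reduction to $s=8$ and your zero-dangerous case are fine (indeed, at $s=8$ you do not even need the $s=6$ threshold: as long as $\Delta(G)\le\ceil{n/2}+2$, the Partitioning Lemma gives $d_{F_i}(v)\le\max\{\ceil{n/6}+1,\floor{d_G(v)/2}\}\le\ceil{n/4}+1$, and since each forest only needs an equitable \emph{4}-coloring, $\alpha_v(F_i)\ge 1+\ceil{(|F_i|-\ceil{n/4}-1)/2}\ge\floor{|F_i|/4}$ already suffices). The genuine gap is that the cases you yourself identify as the real content --- one or two dangerous vertices --- are not proved: you defer them to ``mirroring the $s=6$ proof,'' which is an argument you have not constructed, and the bookkeeping does not port verbatim. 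Concretely, in your two-dangerous plan the ``core'' $G\setminus(N[w_1]\cup N[w_2])$ can have order close to $n/3$, and by \Cref{small-classes-obs} a $3$-coloring of it only guarantees classes of size at most about $n/6$, which overflows the target class size $\ceil{n/8}$; so you must either trim the core to a prescribed order, spend more than three colors on it, or re-balance via the collar, and each option re-raises exactly the core/collar interaction (how many colored neighbors a collar vertex can see, and how many collar neighbors a core vertex can have) that you have left unresolved. The one-dangerous dichotomy is likewise only asserted. As written, the proposal is a plan for the hard cases, not a proof of them.

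It is also worth knowing that the paper's own $s=8$ argument avoids the dangerous-vertex analysis entirely, which is the whole point of treating $s\ge 8$ as a warmup. If $\Delta(G)\ge\ceil{n/2}+3$, one uses the two spare colors to delete two independent sets: first $I_v$ of size $\floor{n/8}$ containing a maximum-degree vertex $v$ (it exists by hypothesis), then --- using $K_{2,3}$-freeness to show the new maximum degree is at most $\floor{n/2}-1$, hence $\alpha_w(G-I_v)\ge\ceil{|G-I_v|/7}$ --- a set $I'_w$ of size $\floor{|G-I_v|/7}$ containing the next maximum-degree vertex. A degree-sum/pigeonhole argument (three vertices of total degree more than $n$ force a $K_{2,3}$) then shows the remaining graph $G''$ satisfies $\Delta(G'')\le 2\ceil{|G''|/6}+3$, so \Cref{partitioning-lem} plus the Tree Theorem equitably $6$-color $G''$, and the three pieces assemble into an equitable $8$-coloring with no collar-filling step at all. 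The moral is that with two extra colors you can afford to ``buy out'' both potentially dangerous vertices up front, rather than reproducing the delicate $s=6$ machinery; your route, even if completed, would be substantially harder than necessary for this theorem.
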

\begin{proof}
	The second statement follows from the first by \Cref{reduc-to-small-lem}.  Now we prove the first.

	Suppose that $\Delta(G)\le \ceil{n/2}+2$.  
	By the Partitioning Lemma, we have an equipartition into forests $F_1$ and $F_2$ with $||F_1|-|F_2||\le 1$
	and $d_{F_i}(v)\le \ceil{n/4}+1$ for all $i\in[2]$ and all $v\in F_i$.
	Now $\alpha_v(F_i)\ge 1 + \ceil{(|F_i|-(\Delta(F_i)+1))/2}\ge \ceil{(|F_i|-\ceil{n/4})/2}\ge \floor{|F_i|/4}$.
	To verify the final inequality, let $k:=\floor{n/8}$ and let $\ell:=n-8k$.  When $\ell\in\{0,1,2,3\}$, we have
	$\ceil{(|F_i|-\ceil{n/4})/2}\ge \ceil{(4k-(2k+1))/2}=\ceil{(2k-1)/2}=k\ge \floor{|F_i|/4}$.  And when $\ell\in\{4,5,6,7\}$,
	we have $\ceil{(|F_i|-\ceil{n/4})/2}\ge \ceil{(4k+2-(2k+2))/2}=k\ge \floor{|F_i|/4}$.  
	By the Tree Theorem, each forest $F_i$ has a $4$-coloring with each color class of size $k$ or $k+1$.  Together, these $4$-colorings
	give an equitable $8$-coloring of $G$.

	Instead assume that $\Delta(G)\ge \ceil{n/2}+3$.
	Now let $v$ be a vertex of maximum degree, and let $I_v$ be an independent set containing $v$ of size $\floor{n/8}$.
	Let $G':=G-I_v$.  Let $w$ be a vertex of maximum degree in $G'$.  If $d_{G'}(w)\ge \floor{n/2}$, then also $d_G(w)\ge d_{G'}(w)\ge \floor{n/2}$.
	Thus $d_G(v)+d_G(w)\ge \ceil{n/2}+3 + \floor{n/2} = n+3$. 
	So by Pigeonhole $v$ and $w$ have at least $3$ common neighbors, and $G$ contains $K_{2,3}$, a contradiction.
	Hence, $d_{G'}(w)\le \floor{n/2}-1$ and $\alpha_w(G')\ge 1 + (7n/8-(1+\floor{n/2}-1))/3\ge 1 + \floor{n/8}\ge \ceil{|G'|/7}$.

	Let $I'_w$ be an independent set in $G'$ that contains $w$ and has size $\floor{|G'|/7}$.  Let $G'':=G'-I'_w$.  
	Now we must show that $\Delta(G'') \le 2\ceil{|G''|/6}+3$, so we can finish by the Partitioning Lemma and the Tree Theorem, as we explain below.
	Suppose not, and let $x$ be a vertex of maximum degree in $G''$.  So 
	\begin{align*}
		d_G(v)+d_G(w)+d_G(x) &\ge \ceil{|G|/2}+3+2(2\ceil{|G''|/6}+4)\\
				     &\ge |G|/2 + 2|G''|/3 + 11\\
				     &\ge |G|/2 + 2(3|G|/4)/3 + 11\\
				     &= |G|+11.
	\end{align*}
	So by Pigeonhole, two of vertices $v,w,x$ share at least $3$ neighbors in $G$.  
	Thus, $G$ contains $K_{2,3}$, again contradicting that $G$ is outerplanar.  Hence, $\Delta(G'')\le 2\ceil{|G''|/6}+3$, as desired.

	By the Partitioning Lemma, $G$ has a forest equipartition $F_1\uplus F_2$ with $\Delta(F_i)\le \ceil{|G''|/6}+1$ for each $i\in [2]$.
	We will get an equitable $3$-coloring $\vph_i$ of each forest $F_i$; together $\vph_1$ and $\vph_2$ give an equitable $6$-coloring of $G$.
	Recall that $\alpha_v(F_i)$ denotes the size of a largest independent set in $F_i$ containing vertex $v$, for each $i\in[2]$ and each $v\in F_i$.
	To see that each $\vph_i$ exists, by the Tree Theorem it suffices to show that always $\alpha_v(F_i)\ge \floor{|F_i|/3}$.  Since $F_i$ is a forest,
	each of its subgraphs $H$ has an independent set of size at least $|H|/2$.
	Thus, we have $\alpha_v(F_i)\ge 1+\ceil{(|F_i| - (\Delta(F_i)+1))/2} \ge \ceil{(|F_i|-\ceil{|G''|/6})/2}$.
	To see that this final expression is at least $\floor{|F_i|/3}$, 
	let $k:=\floor{|G''|/6}$.  If $|G''| = 6k+5$, then $|F_i|/3 = k+1$ for some $i\in[2]$.  And our final expression is 
	$\ceil{((3k+3)-(k+1))/2} = k+1$.  Otherwise, $\floor{|F_i|/3} = k$ and $\ceil{(|F_i|-\ceil{|G''|/6})/2} \ge \ceil{(3k-(k+1))/2} = \ceil{(2k-1)/2}=k$, 
	as desired.
\end{proof}

\section{The Partitioning Lemma}
\label{partitioning-sec}
Recall from the introduction that a vertex $w$ is \emph{dangerous} if $d(w)\ge 2\lceil n/6\rceil + 4$.

\begin{lem}
	If $G$ is outerplanar, then $G$ has at most $2$ vertices $w$ such that $d(w)\ge 2\lceil n/6\rceil + 3$.
	In particular, $G$ has at most $2$ dangerous vertices.
	\label{few-big-lem}
\end{lem}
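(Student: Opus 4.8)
The plan is to bound the number of edges of $G$ from below using the degrees of the high-degree vertices and then contradict the fact that an outerplanar graph on $n$ vertices has fewer than $2n$ edges (in fact at most $2n-3$). So suppose, for contradiction, that $G$ has three distinct vertices $w_1,w_2,w_3$ each with $d(w_i)\ge 2\ceil{n/6}+3$. The first step is to control the overlaps of their neighborhoods. Since $G$ is outerplanar it contains no $K_{2,3}$; hence any two of $w_1,w_2,w_3$ have at most $2$ common neighbors, so by inclusion--exclusion $|N(w_1)\cup N(w_2)\cup N(w_3)|\ge \sum_i d(w_i) - \sum_{i<j}|N(w_i)\cap N(w_j)| \ge 3(2\ceil{n/6}+3) - 6 = 6\ceil{n/6}+3 \ge n+3$, which already overshoots $n$. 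That by itself is the contradiction if the $w_i$ are not among each other's neighbors; the only subtlety is that $w_1,w_2,w_3$ might lie in $N(w_1)\cup N(w_2)\cup N(w_3)$, but that costs at most $3$, and we have slack $3$ from $6\ceil{n/6}\ge n$, so we need to squeeze a little more or argue slightly more carefully.

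A cleaner route, which I expect to be the one actually used, is to count edges directly rather than vertices. Each $w_i$ contributes $d(w_i)$ edges, and an edge is counted more than once only if both its endpoints lie in $\{w_1,w_2,w_3\}$ (at most $3$ such edges) or if it is incident to a common neighbor of two of the $w_i$ — but an edge from a vertex $x$ to $w_i$ and from $x$ to $w_j$ are genuinely different edges, so the only true double-counting among the $\sum d(w_i)$ edge-slots comes from edges internal to $\{w_1,w_2,w_3\}$. Thus $|E(G)| \ge \sum_{i=1}^3 d(w_i) - 3 \ge 3(2\ceil{n/6}+3) - 3 = 6\ceil{n/6}+6 \ge n+6$. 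This is not yet a contradiction with $|E(G)|\le 2n-3$ for small $n$, so we do need the neighborhood-overlap bound after all: combining both ideas, $|E(G)|$ counts the $\sum d(w_i)$ slots, of which at most $3$ are internal edges counted twice and at most $\sum_{i<j}|N(w_i)\cap N(w_j)| \le 6$ further slots are repeats (a common neighbor $x$ of $w_i,w_j$ gives two edges $xw_i,xw_j$, genuinely distinct, so actually these are not repeats) — on reflection the honest statement is just $|E(G)| \ge \sum_i d(w_i) - 3$, and then outerplanarity of $G$ gives $|E(G)|\le 2n-3$, so $2n-3 \ge 6\ceil{n/6}+6$, i.e. $2n \ge 6\ceil{n/6}+9 \ge n+9$, forcing $n\le \text{something small}$; one then disposes of the finitely many small $n$ by hand (for tiny $n$, $2\ceil{n/6}+3 > n-1 \ge \Delta(G)$, so no such vertex exists at all).

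Concretely the steps are: (1) assume three vertices $w_1,w_2,w_3$ with degree at least $2\ceil{n/6}+3$; (2) note $|E(G)|\ge d(w_1)+d(w_2)+d(w_3) - e(\{w_1,w_2,w_3\}) \ge 3(2\ceil{n/6}+3)-3$; (3) invoke $|E(G)|\le 2|V(G)|-3$ for connected outerplanar $G$ (and $\le 2n-3$ in general, or handle components separately — really we just need $|E(G)|< 2n$); (4) derive $2n > 6\ceil{n/6}+6 \ge n+6$, so $n>6$, and then check directly that for $n>6$ this reads $2n > n+6$ giving no contradiction yet, so sharpen step (3) using $K_{2,3}$-freeness to reduce the multiplicity of shared neighbors — the point being that the $e(\{w_1,w_2,w_3\})$ correction is at most $3$ while the gain $6\ceil{n/6}-n$ can be as small as $0$, so the inequality $2n-3\ge 6\ceil{n/6}+6$ is genuinely false once $n$ is large, completing the contradiction; (5) the ``in particular'' clause is immediate since $2\ceil{n/6}+4 > 2\ceil{n/6}+3$. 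The main obstacle is getting the constants to line up in the regime where $6\ceil{n/6}$ barely exceeds $n$ (i.e. when $6\mid n$): there the crude edge count gives only $|E(G)|\ge n+6 < 2n$, so one really must use a second argument, either the $K_{2,3}$-free overlap bound to push the vertex count over $n$, or the stronger outerplanar edge bound $|E(G)| \le 2n-3$ together with $n$ not too small — so I would present it via the vertex-cover-of-neighborhoods count $|N(w_1)\cup N(w_2)\cup N(w_3)| > n$, which is the tightest and cleanest.
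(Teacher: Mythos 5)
Your final argument is correct, and it is essentially the same counting as the paper's, just run in contrapositive form. You cap each pairwise overlap $|N(w_i)\cap N(w_j)|$ at $2$ (three common neighbors would give a $K_{2,3}$ subgraph) and use the Bonferroni bound $|N(w_1)\cup N(w_2)\cup N(w_3)|\ge \sum_i d(w_i)-\sum_{i<j}|N(w_i)\cap N(w_j)|\ge (n+9)-6=n+3>n$, which is absurd since the union is a subset of $V(G)$. The paper runs the same count the other way around: it observes that at most one vertex can be adjacent to all three $w_i$, deduces that at least $7$ vertices receive edges from at least two of them, and then pigeonholes to exhibit a pair of the $w_i$ with three common neighbors, i.e.\ it \emph{produces} the forbidden $K_{2,3}$ rather than assuming the pairwise bound up front; the two arguments buy the same thing. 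Two remarks on your write-up. First, your worry that $w_1,w_2,w_3$ might lie in the union, ``costing $3$'' and requiring you to squeeze further, is vacuous: the union is a set of vertices of $G$, so it has size at most $n$ regardless of what it contains, and $n+3\le n$ is already a contradiction, with no small-$n$ cases to dispose of. Second, the entire edge-counting detour (comparing $|E(G)|\ge\sum_i d(w_i)-3$ with $|E(G)|\le 2n-3$) is, as you yourself conclude, a dead end when $6$ divides $n$ and should simply be cut; the neighborhood-union count you commit to at the end is by itself a complete proof, and the ``in particular'' clause is immediate as you say.
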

\begin{proof}
	Suppose instead that $G$ has at least $3$ such vertices: $w_1, w_2, w_3$.  Now $d(w_1)+d(w_2)+d(w_3) \ge n+9$.
	Since $G$ has no copy of $K_{2,3}$, at most one vertex is adjacent to all $3$ of $w_1,w_2,w_3$.  
	So the remaining at least $n+6$ incident edges go to at most $n-1$ vertices.
	Thus, at least $7$ vertices receive edges from at least $2$ of $w_1,w_2,w_3$.  By Pigeonhole, the vertices in one of these $3$ pairs 
	among $w_1,w_2,w_3$ have at least $\lceil 7/3\rceil = 3$ common neighbors.  Hence, $G$ contains a copy of $K_{2,3}$, contradicting that $G$ 
	is outerplanar.
\end{proof}

To prove the Main Theorem, we handle seperately the cases that the number of dangerous vertices in $G$ is $0$, $1$, or $2$.
In this section we prove the Partitioning Lemma, restated below, which combines with the Tree Theorem to handle the case that
the number of dangerous vertices in $G$ is $0$.  When applying the Partitioning Lemma, we are mainly intersested in the case that 
$\Delta(G)\le 2\ceil{n/6}+3$, in which case the degree bound simplifies to $d_{F_i}(v)\le \ceil{n/6}+1$.  But by stating the lemma 
more generally, we can apply it directly in our proof above of the Main Theorem when $s\ge 8$.

\begin{lem}[Partitioning Lemma]
	If $G$ is outerplanar, then $G$ has a forest equipartition $F_1\uplus F_2$ where 
	$d_{F_i}(v)\le \max\{\ceil{n/6}+1,\floor{d_G(v)/2}\}$ for all $i\in[2]$ and all $v\in F_i$.
	\label{partitioning-lem}
\end{lem}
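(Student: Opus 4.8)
The plan is to prove the Partitioning Lemma by induction on $n$, first reducing to the case where $G$ is a maximal outerplanar graph (a near-triangulation) with a Hamiltonian cycle on the outer face. Adding edges only makes the degree constraint harder to satisfy, but it is still manageable because $\ceil{n/6}+1$ typically grows faster than the degree bound we need, and edge addition lets us use the rigid local structure of maximal outerplanar graphs. The weak planar dual of a maximal outerplanar graph is a tree whose leaves correspond to ``ears'' — triangles $uvw$ where $v$ has degree $2$. I would locate such an ear, or more generally a small configuration near a leaf of the dual tree, delete its low-degree vertices (or a short path of them), apply induction to the smaller graph $G'$, and then carefully place the deleted vertices into $F_1$ or $F_2$.

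\textbf{Key steps.} First, I would set up the reduction to maximal outerplanar graphs, checking that if $G'\supseteq G$ is maximal outerplanar on the same vertex set and $G'$ has the desired equipartition, then so does $G$ (since $d_{F_i\cap G}(v)\le d_{F_i\cap G'}(v)$ and $\floor{d_G(v)/2}\le\floor{d_{G'}(v)/2}$, while the forest property is inherited by subgraphs). Second, for a maximal outerplanar $G$ with $n$ large, I would examine the outer Hamiltonian cycle and the chords. Near a leaf of the weak dual there is a vertex $v$ of degree $2$ whose two neighbors $x,y$ are adjacent; more useful is to peel off a maximal ``fan'' or a short path $P$ of consecutive degree-$2$ (or low-degree) boundary vertices hanging off a common structure, delete $P$, apply induction to $G-P$, and then greedily assign each vertex of $P$ to whichever of $F_1,F_2$ keeps both the forest property and the degree bound; since each deleted vertex has small degree in $G$ and at most a couple of its neighbors lie in $G-P$, there is always a safe choice, and we can alternate to keep $||F_1|-|F_2||\le 1$. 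Third, I would handle the base cases (small $n$, where the bound $\ceil{n/6}+1$ is weak and one can, e.g., just take any forest equipartition, which exists for outerplanar graphs since they are $2$-degenerate) and the degenerate situations where no long path of low-degree boundary vertices exists — here the near-triangulation is close to a ``fan'' or ``snake'' and can be partitioned directly.

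\textbf{Main obstacle.} The hard part will be controlling the degrees of the \emph{old} vertices — specifically the two possibly-dangerous high-degree vertices — when we reinsert the deleted path $P$. A deleted vertex $p\in P$ contributes to $d_{F_i}(x)$ for its neighbors $x$ that end up in the same $F_i$, and if $x$ is a dangerous vertex sitting just below the target bound, even one extra incident edge in its forest can violate $d_{F_i}(x)\le\ceil{n/6}+1$. So the induction hypothesis as stated may not be strong enough, and I expect to need a strengthened invariant: something like ``each $v$ has at most $\floor{d_G(v)/2}$ \emph{and} at most $\ceil{n/6}+1$ incident edges in its forest, with slack reserved near high-degree vertices,'' or a careful choice of which ear to peel so that the reinserted vertices avoid the dangerous vertices entirely (possible because, by \Cref{few-big-lem}, there are at most two such vertices, and a large near-triangulation has many disjoint ears, most of them far from any fixed pair of vertices). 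Making this ``avoid the dangerous vertices'' argument precise — ensuring a usable ear always exists and that the bookkeeping on $|F_1|-|F_2|$ survives peeling several vertices at once — is where the real work lies, and it is presumably why the authors describe the proof as ``detailed'' and defer some of it to the appendix.
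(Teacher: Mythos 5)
Your reduction to maximal outerplanar graphs is where the proposal breaks. The bound you must prove is $d_{F_i}(v)\le\max\{\ceil{n/6}+1,\floor{d_G(v)/2}\}$ with $d_G(v)$ the degree in the \emph{original} graph, so a forest equipartition of a triangulation $G'$ satisfying $d_{F_i}(v)\le\max\{\ceil{n/6}+1,\floor{d_{G'}(v)/2}\}$ does not restrict to one for $G$; your inequality $\floor{d_G(v)/2}\le\floor{d_{G'}(v)/2}$ points the wrong way for the implication you need. Concretely, a vertex with $d_G(v)=2\ceil{n/6}+2$ that picks up several chords during triangulation is permitted, by the $G'$-level guarantee, to receive $\floor{d_{G'}(v)/2}>\ceil{n/6}+1$ forest edges, all of which can be edges of $G$, violating the bound for $G$. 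This is exactly the part of the argument the paper labors over: it first proves (Claim~1 of its proof, with details deferred to the appendix) that one can add edges to reach a maximal outerplanar $G'$ in which \emph{every} vertex has degree at most $2\ceil{n/6}+3$ except at most two vertices $w_1,w_2$, each gaining at most one edge over its $G$-degree (and if both are extremal they are adjacent). So the triangulation step is not ``manageable'' by default; it is the delicate step, and your plan has no mechanism for controlling it.

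Second, your inductive step is only a sketch, and the greedy reinsertion claim (``there is always a safe choice'') is not justified for the \emph{old} neighbors of the peeled vertices --- for instance the apex of the fan you peel may be sitting exactly at its degree cap. The paper resolves this not by steering ears away from dangerous vertices but by strengthening the induction hypothesis to a half-degree invariant: every maximal outerplanar $H$ admits a forest equipartition with $d_{F_i}(v)\le d_H(v)/2$ for all $v$, and $d_{F_i}(w)\le (d_H(w)-1)/2$ for the endpoints $w$ of one specified edge $e$ (taken to be $w_1w_2$, which absorbs the $+1$ from the triangulation step). The induction deletes either a $3$-vertex with a $2$-neighbor or a $4$-vertex with two $2$-neighbors, and the key structural fact is that when $|H|\ge 5$ such a configuration can be found avoiding both endpoints of $e$; with this configuration each remaining neighbor loses at least two incident edges before gaining back at most one, so the half-degree invariant survives, and the stated bound then follows by combining it with the degree-controlled triangulation. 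You do gesture at such a strengthened invariant as one of two options, which is the right instinct, but without it --- and without the degree-controlled triangulation --- neither half of your plan goes through as written.
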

\begin{cor}
	If $\Delta(G)\le 2\ceil{n/6}+3$ (so $G$ has $0$ dangerous vertices), then $G$ has an equitable $6$-coloring.
\end{cor}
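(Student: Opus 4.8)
The plan is to derive this corollary immediately from the Partitioning Lemma together with the forest version of the Tree Theorem; in effect it is the special case ``$G''=G$'' of the last paragraph in the proof of \Cref{main-thm-8}.

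First I would apply the Partitioning Lemma to obtain a forest equipartition $F_1\uplus F_2$. Because $d_G(v)\le\Delta(G)\le 2\ceil{n/6}+3$ for every $v$, we have $\floor{d_G(v)/2}\le\ceil{n/6}+1$, so the bound supplied by the Partitioning Lemma collapses to $d_{F_i}(v)\le\ceil{n/6}+1$ for all $i\in[2]$ and all $v\in F_i$; in particular $\Delta(F_i)\le\ceil{n/6}+1$. Since each $F_i$ is a forest, hence $2$-colorable, every $v\in F_i$ satisfies $\alpha_v(F_i)\ge 1+\ceil{(|F_i|-(d_{F_i}(v)+1))/2}\ge\ceil{(|F_i|-\ceil{n/6})/2}$.

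Next I would check that this last quantity is at least $\floor{|F_i|/3}$. With $k:=\floor{n/6}$ and $||F_1|-|F_2||\le 1$, we have $|F_i|\in\{\floor{n/2},\ceil{n/2}\}$, and the verification splits on the value of $n-6k$; it is precisely the computation already performed at the end of the proof of \Cref{main-thm-8} (with $G''$ there replaced by $G$ here), the only mildly delicate case being $n=6k+5$. Hence, by the Tree Theorem, each $F_i$ has a $3$-coloring whose color classes all have size $\floor{|F_i|/3}$ or $\ceil{|F_i|/3}$. Coloring $F_1$ with colors $1,2,3$ and $F_2$ with colors $4,5,6$ yields a proper $6$-coloring of $G$, and a final case analysis on the residue of $n$ modulo $6$---again using $||F_1|-|F_2||\le 1$---confirms that all six class sizes lie in $\{\floor{n/6},\ceil{n/6}\}$, so the coloring is equitable.

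There is no real obstacle here: once the Partitioning Lemma is in hand, the argument is entirely arithmetic bookkeeping, essentially already carried out inside the proof of \Cref{main-thm-8}. The only points needing any attention are the two short residue-mod-$6$ case checks (that $\alpha_v(F_i)\ge\floor{|F_i|/3}$, and that the combined $6$-coloring is equitable), and both are routine.
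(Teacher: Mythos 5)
Your proposal is correct and follows essentially the same route as the paper: the paper's proof is a short sketch that likewise applies the Partitioning Lemma (noting the degree bound collapses to $\ceil{n/6}+1$ when $\Delta(G)\le 2\ceil{n/6}+3$) and then equitably $3$-colors each forest exactly as in the final paragraph of the proof of \Cref{main-thm-8}. The residue-mod-$6$ bookkeeping you flag is the same computation already carried out there, so no further comment is needed.
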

\begin{proof}[Proof Sketch]
	The proof is nearly identical to the final paragraph in the proof of \Cref{main-thm-8}.
	We get an equipartition into $2$ forests, and must show that each forest $F_i$ has an equitable $3$-coloring.
	This follows directly from the fact that $\Delta(F_i)\le \ceil{n/6}+1$.
\end{proof}

\begin{proof}[Proof of the Partitioning Lemma]
	We begin by outlining the proof.  By \Cref{few-big-lem}, $G$ has at most $2$ vertices, say $w_1$ and $w_2$ such that 
	$d_G(w_i)\ge 2\ceil{n/6}+3$.  First we add edges to get a maximal outerplanar supergraph $G'$ of $G$ (with the same vertex set)
	such that $d_{G'}(v)\le 2\ceil{n/6}+3$ for all $v\in V(G)$ except $d_{G'}(w_i)\le d_G(w_i)+1$.
	Second, we get a forest equipartition $F_1\uplus F_2$ for $G'$ such that $\Delta(F_i)\le \ceil{n/6}+1$.
	Mainly, we show that each forest inherits at most half of the edges incident to each of its vertices; but we must be more careful with
	$w_1,w_2$, if they exist.  Finally, we show that each forest $F_i$ has an equitable $3$-coloring $\vph_i$.  Together, $\vph_1$
	and $\vph_2$ give an equitable $6$-coloring of $G$.  Now we provide the details.  

\begin{clm}
	If $G$ is outerplanar, then there exists a supergraph $G'$ of $G$ such that $G'$ is maximal outerplanar and $d_{G'}(v)\le 2\lceil n/6\rceil+3$ 
	for all vertices $v$ but at most two, say $w_1,w_2$.  Furthermore, $d_{G'}(w_i)\le \max\{2\ceil{n/6}+4,d_G(w_i)+1\}$ and if 
    	equality holds for both $i\in [2]$, then $w_1w_2\in E(G')$.
	\label{clm1}
\end{clm}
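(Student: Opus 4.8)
The plan is to construct $G'$ explicitly, adding edges to $G$ in two stages --- first making $G$ $2$-connected, then triangulating it --- while keeping the degree of every vertex except at most two below $D:=2\ceil{n/6}+3$. For the set-up: if $n\le 8$ then $D\ge n-1$, so any maximal outerplanar supergraph of $G$ on the same vertex set works with no exceptional vertices; hence I assume $n\ge 9$, so that $D\le n/3+5$. Let $w_1,w_2$ be two vertices of largest degree in $G$; by \Cref{few-big-lem} every other vertex has degree at most $D-1$. A short degree-sum count ($\sum_v d_G(v)\le 2(2n-3)<4n$ while $D\ge n/3$), sharpened by the $K_{2,3}$-free argument of \Cref{few-big-lem}, shows that only a bounded number of vertices --- at most a dozen, say --- have degree within $4$ of $D$; call these the \emph{heavy} vertices (so $w_1,w_2$ are heavy). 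Every non-heavy vertex has degree at most $D-5$ in $G$, so it can acquire many new edges and still finish with degree at most $D$; thus the whole difficulty is to control the heavy vertices, and above all $w_1$ and $w_2$.

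First I would reduce to the $2$-connected case. If $G$ is disconnected or has a cut vertex, I add edges to reach a $2$-connected outerplanar graph $G_1$ on $V(G)$ using only vertices of degree at most $2$: every block of $G$ that is not all of $G$ is $2$-connected outerplanar (or a $K_2$), hence contains a vertex of degree at most $2$ that is not a cut vertex and so has global degree at most $2$. Merging the blocks one at a time along the block tree through such low-degree vertices, processing so that no vertex is reused more than a bounded number of times, gives $G_1\supseteq G$ with $d_{G_1}(v)\le\max\{d_G(v),5\}\le D$ for every non-heavy $v$ and $d_{G_1}(v)=d_G(v)$ for every heavy $v$, in particular $d_{G_1}(w_i)=d_G(w_i)$. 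The one subtlety is that the merges must respect a fixed outerplane embedding --- one cannot simply attach a pendant path between two prescribed vertices of a block without risking a $K_{2,3}$ --- so all new edges are drawn inside the outer face.

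Next I would triangulate. Since $G_1$ is $2$-connected its outer face is a Hamiltonian cycle $C$ and every inner face is bounded by a cycle, so $G'$ will be a triangulation of the polygon $C$ extending the chords already in $G_1$, and $E(G)\subseteq E(G_1)\subseteq E(G')$ is automatic. I triangulate each inner face separately (the diagonals of distinct faces lie in disjoint regions). For a face meeting no heavy vertex I use a ``zigzag'' triangulation, in which every vertex of the face gains at most two new edges; since a non-heavy vertex lies on at most $d_{G_1}(v)-1$ inner faces and has ample slack, it ends at degree at most $D$. For a face meeting heavy vertices the triangulation must be chosen more delicately, routing diagonals away from the heavy vertices as far as their small budgets require --- one can always triangulate a polygon without adding an edge at one prescribed vertex, and, since there are only boundedly many heavy vertices and the non-heavy vertices on those faces can absorb the displaced diagonals, one can arrange that each heavy vertex's total gain over all of its faces stays within its budget (a heavy vertex of degree $D-j$ tolerates $j$ new edges). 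It remains to treat $w_1,w_2$. If $w_1w_2\notin E(G)$ then neither stage ever adds an edge at $w_1$ or $w_2$ (in particular the chord $w_1w_2$ is never inserted), so $d_{G'}(w_i)=d_G(w_i)$, which is strictly below $\max\{2\ceil{n/6}+4,d_G(w_i)+1\}$ for both $i$; hence ``equality holds for both'' fails and the last clause is vacuous. If $w_1w_2\in E(G)$ then $w_1w_2$ lies on at most two inner faces of $G_1$, and for each such face that is not already a triangle the boundary edge $w_1w_2$ must lie in a triangle of the triangulation whose apex joins both its endpoints, forcing exactly one new diagonal at $w_1$ \emph{or} at $w_2$; placing one forced diagonal at $w_1$ and the other at $w_2$ gives $d_{G'}(w_i)\le d_G(w_i)+1\le\max\{2\ceil{n/6}+4,d_G(w_i)+1\}$, and whenever equality holds for both we already have $w_1w_2\in E(G)\subseteq E(G')$.

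The main obstacle is the bookkeeping of the triangulation stage: proving that the triangulations of the boundedly many faces meeting heavy vertices can be chosen in a coordinated way so that \emph{every} heavy vertex finishes within its own (small) budget, while the non-heavy vertices on those same faces also stay at degree at most $D$. The awkward configurations --- several heavy vertices crowded onto a single face, two heavy vertices consecutive on a face, or one heavy vertex lying on many faces at once --- are where essentially all of the detailed case analysis lives; everything else is routine.
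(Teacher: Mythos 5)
Your overall route differs from the paper's (the paper does not fix a static set of ``heavy'' vertices and triangulate face-by-face; it declares a vertex \emph{bad} exactly when its degree in the \emph{current} graph reaches $2\lceil n/6\rceil+3$, greedily adds any outerplanarity-preserving edge avoiding bad vertices, and uses \Cref{few-big-lem} to keep the number of bad vertices at most two at every moment, so that no per-vertex budgeting is ever needed), and your version has genuine gaps. The most serious one is the assertion that the $2$-connectivity stage never adds an edge at a heavy vertex, i.e.\ $d_{G_1}(w_i)=d_G(w_i)$, and its downstream consequence ``if $w_1w_2\notin E(G)$ then neither stage ever adds an edge at $w_1$ or $w_2$.'' Both are false. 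When two parts meet at a cut vertex $v$, any edge $xy$ joining them (which maximality forces to exist, since a maximal outerplanar graph is $2$-connected) must, to avoid a $K_{2,3}$ minor, have $x$ equal to a neighbor of $v$ along the outer face on its side: otherwise the two internally disjoint $x,v$-arcs of length at least $2$ on that side together with the path $x\,y\,\cdots\,v$ through the other side form a $K_{2,3}$ subdivision. So take one side $2$-connected with outer cycle $v,w_1,a_1,\dots,a_k,w_2$ and large fans inside centered at $w_1$ and $w_2$: every admissible repair edge is incident to $w_1$ or $w_2$, even though $w_1w_2\notin E(G)$. Hence a forced $+1$ at a bad vertex can already occur in your first stage, your vacuity argument for the ``furthermore'' clause collapses, and the real work---showing that the cut-vertex obstruction and the $4$-face obstruction together cost each of $w_1,w_2$ at most one edge, and that when both pay, $w_1w_2\in E(G')$---is exactly the coordination carried out in Claims~\ref{subclaim2}--\ref{subclaim4} and absent from your sketch. (Relatedly, your block-merging step cannot be routed ``through low-degree vertices'' at all: the merge edge must sit at neighbors of the cut vertex, not at a chosen degree-$2$ vertex, and an internal block such as a $4$-cycle with one chord may have both of its degree-$2$ vertices be cut vertices.)

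The degree bookkeeping in the triangulation stage also does not close. A non-heavy vertex may have degree as large as $2\lceil n/6\rceil-2\approx n/3$ and lie on $\Theta(n)$ inner faces; a zigzag triangulation that adds up to two diagonals at it \emph{per face} can triple its degree, far above $2\lceil n/6\rceil+3$, so ``ample slack'' is not available, and the coordinated budgeting for the dozen-or-so heavy vertices (which you defer as ``where all the case analysis lives'') is precisely the content of the claim rather than routine. The paper's dynamic definition of bad vertices makes all of this disappear: any vertex that climbs to degree $2\lceil n/6\rceil+3$ is immediately frozen, \Cref{few-big-lem} guarantees at most two vertices are ever frozen, and the only places where an edge at a frozen vertex is unavoidable are the two configurations above (a cut vertex whose outer-face neighbors on one side are both bad, and a $4$-face on which the two bad vertices are consecutive), each analyzed separately. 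I would recommend abandoning the static heavy-set accounting and adopting that dynamic greedy framework, since your two-stage skeleton (first $2$-connect, then triangulate) can otherwise be salvaged.
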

\begin{clmproof}[Proof Sketch]
	We just sketch the main idea; the full details are available in the appendix.  A vertex $v$ is \emph{bad} if $d_G(v)\ge 2\ceil{n/6}+3$.
	To form $G'$ from $G$, we add edges as long as possible, maintaining outerplanarity.
	The key is to avoid adding edges incident to bad vertices, of which (by \Cref{few-big-lem}) we have at most $2$.
	If $G$ has at most $1$ bad vertex, say $w$, then we never need to add an edge incident to $w$.  However, by adding edges we might create
	a second bad vertex.  So assume that $G$ has exactly $2$ bad vertices, $w_1$ and $w_2$.  

	As we form $G'$, only $2$ situations can force us to add an edge incident to a bad vertex $w_i$.  The first is when $w_1,w_2$ appear
	consecutively along a $4$-face that is not the outer face.  The second is when $w_1$ and $w_2$ have a common neighbor that is a cut-vertex.
	The first type can occur at most twice, and the second type can occur at most once; but in total they can occur at most twice.
\end{clmproof}

	Our plan now is to partition $V(G')$ as $F_1\uplus F_2$ by induction on $|G'|$.  Essentially, the induction is driven by the fact that every
	outerplanar graph is $2$-degenerate; actually, we will use something slightly stronger.  However, this induction argument is complicated by
	the presence of vertices $w_1,w_2$.  To ensure that we can save a bit extra for each $w_i$, we want them to appear last in the degeneracy order, 
	so first in the inductive construction of $F_1\uplus F_2$; this extra requirement motivates the ``furthermore'' statement in the next claim.
	(We prove the next claim for a general maximal outerplanar graph $H$, but we will specifically apply it to the graph $G'$ resulting from the 
	previous claim.)  As usual, a \emph{$k$-vertex} is a vertex of degree $k$.  Similarly, a \emph{$k$-neighbor}, of a specified vertex, is a neighbor
	of degree $k$.

\begin{clm}
	If $H$ is a maximal outerplanar graph with $|H|\ge 4$, then $H$ contains either (a) a $3$-vertex with a $2$-neighbor or (b) a $4$-vertex with two 
	$2$-neighbors.  Furthermore, if $|H|\ge 5$, then we can specify an arbitrary edge $e$ of $H$ and find either (a) or (b) that contains neither 
	endpoint of $e$.
	\label{clm2}
\end{clm}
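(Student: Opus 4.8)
The plan is to work with the \emph{weak dual tree} $T$ of $H$. Since $H$ is maximal outerplanar, it is a triangulated polygon, so $T$ is a tree on $n-2$ nodes, of maximum degree $3$, whose leaves are exactly the \emph{ears} of $H$: triangular faces with two boundary edges, each carrying a unique degree-$2$ \emph{ear tip}. The feature I would track throughout is that every structure of type (a) or (b) that we produce has all of its vertices among the vertices of a single face or of two adjacent faces, so that controlling which faces we use also controls which vertices can appear.

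First I would prove the plain statement. If $|H|=4$ then $H=K_4-e$, which contains a $3$-vertex adjacent to a $2$-vertex. If $|H|\ge 5$, then $T$ has at least $3$ nodes, hence a path of length at least $2$; let $p$ be the second vertex of a longest path of $T$. Then $\deg_T(p)\in\{2,3\}$ and all but at most one of the neighbors of $p$ in $T$ are leaves of $T$. If $\deg_T(p)=2$, let $\ell$ be a leaf-neighbor of $p$, write $\ell\cap p=\{a,b\}$ with $u$ the ear tip of $\ell$, and say the other neighbor of $p$ meets $p$ along $bc$; then $ac$ is a boundary edge of $H$, so $a$ is a $3$-vertex adjacent to the $2$-vertex $u$, giving (a) — and $a,u$ both lie in the face $\ell$. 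If $\deg_T(p)=3$, then $p$ has two leaf-neighbors $\ell_1,\ell_2$ meeting $p$ along two distinct edges of the triangle $p$; their common vertex $v$ is a $4$-vertex adjacent to the two (distinct) ear tips of $\ell_1$ and $\ell_2$, giving (b) — and all three of these vertices lie in the faces $\ell_1,p,\ell_2$.

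For the ``furthermore'' part, fix an edge $e=xy$ and let $B\subseteq V(T)$ be the set of faces of $H$ incident to $x$ or to $y$. The faces at a fixed vertex form a sub-path of $T$, and the paths for $x$ and $y$ meet in the face(s) containing $e$, so $B$ is a connected subtree of $T$; crucially, no vertex of a face outside $B$ lies in $\{x,y\}$. I would first note that a connected subtree of $T$ containing every leaf of $T$ must equal $T$ — otherwise a component of its complement would itself be a subtree of $T$ attached at a single node, and would therefore contain a leaf of $T$. Hence, when $B\ne V(T)$, some component $C$ of $T-B$ is nonempty, and I would rerun the deepest-leaf argument \emph{inside} $C$, rooted at the unique node of $C$ having a neighbor in $B$. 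Provided $|C|\ge 2$, this produces a leaf of $T$ whose $T$-parent has degree $2$, or a degree-$3$ node of $T$ with two leaf-neighbors that are leaves of $T$, in either case with all defining faces inside $C$; since those faces avoid $B$, the associated structure (a) or (b) avoids both $x$ and $y$.

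The cases left over — every component $C$ of $T-B$ is a single pendant leaf of $T$ (so $T-B$ is a set of pendant leaves, each with a degree-$3$ parent in $B$), and the case $B=V(T)$, where $H$ is a fan from $x$, a fan from $y$, or a ``double fan'' hinged at $x$ and $y$ — I would dispatch directly: a degree-$3$ parent with two pendant-leaf neighbors still yields a type-(b) structure whose central $4$-vertex and two ear tips all lie in faces outside $B$; and in the remaining fan and double-fan configurations the extreme ears give structures sitting at an end of the fan, away from $x$ and $y$, with the hypothesis $|H|\ge 5$ providing exactly that room. I expect the main obstacle to be precisely this residual bookkeeping: checking that in every shape of $T-B$ one of the two structure types can be realized entirely within a face-set disjoint from the faces at $x$ and at $y$, together with the (routine but slightly fiddly) inspection of the fan and double-fan cases, including when $x$ or $y$ has degree $2$.
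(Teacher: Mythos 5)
Your proof of the basic statement ($|H|\ge 4$, no specified edge) is the same as the paper's: take the penultimate face on a longest path of the weak dual and split on its dual degree. Your first reduction for the ``furthermore'' part is also sound and parallels the paper's induction: if some component $C$ of $T-B$ has at least two nodes, then every node of $C$ other than its attachment node has all its $T$-neighbours inside $C$, so a deepest leaf of $C$ is a leaf of $T$ and the two-case argument produces a copy of (a) or (b) whose vertices lie in faces outside $B$, hence avoid $x,y$. The genuine gap is in the residual cases, and it is not mere bookkeeping: your governing principle --- that the desired copy of (a) or (b) can be ``realized entirely within a face-set disjoint from the faces at $x$ and at $y$'' --- is \emph{false} in precisely the configurations you leave over. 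Concretely, let $H$ have outer cycle $x\,v_1\,v_2\,u\,v_3\,v_4$ with chords $xv_2$, $xv_3$, $v_2v_3$, and let $e=xv_1$. Then $B$ consists of the three faces at $x$, and $T-B$ is a single pendant leaf (the face $v_2v_3u$) whose degree-$3$ parent has only \emph{one} pendant-leaf child; so neither of your dispatches applies (no parent has two pendant-leaf neighbours, and $B\ne V(T)$, so $H$ is not a fan or double fan). Moreover $H$ has no $3$-vertex at all, and the unique admissible structure avoiding $\{x,v_1\}$ is the $4$-vertex $v_3$ with $2$-neighbours $v_4$ and $u$ --- but $v_3$ and $v_4$ lie only in faces of $B$. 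Hence no copy of (a) or (b) avoiding the endpoints of $e$ lives inside faces disjoint from $B$, and the check you defer to ``residual bookkeeping'' cannot succeed as you have framed it. (A smaller slip: the parenthetical ``each with a degree-$3$ parent'' is not automatic; a degree-$2$ parent is possible, though that case is harmless since it yields (a) inside the leaf face.)

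What is missing is an analysis of how pendant leaves of $T-B$ interact with the fans at $x$ and $y$: in the leftover situation you must permit structures whose defining faces meet $B$ but whose vertices avoid $\{x,y\}$, e.g.\ the far end of the fan at $x$, possibly upgraded from type (a) to type (b) by an attached pendant triangle, as in the example above. This is exactly how the paper closes the argument: assuming every copy of (a) and (b) meets $\{x,y\}$, it shows $H$ is an induced subgraph of the double-fan-with-pendant-triangles configuration of Figure~\ref{claim3-fig-b} (its reduction ``a component of $H-\{w_1,w_2\}$ with at least two vertices yields a structure avoiding $w_1,w_2$'' is the vertex-side analogue of your $|C|\ge 2$ step), and then locates the desired structure at the leftmost or rightmost vertex of that configuration --- a structure that necessarily uses faces incident to $x$ or $y$. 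So your skeleton is the right one, but the residual case analysis needs to be redone with the weaker requirement ``vertices avoid $\{x,y\}$'' in place of ``faces avoid $B$'', and the single-pendant-leaf-per-parent and decorated fan/double-fan shapes must be treated explicitly.
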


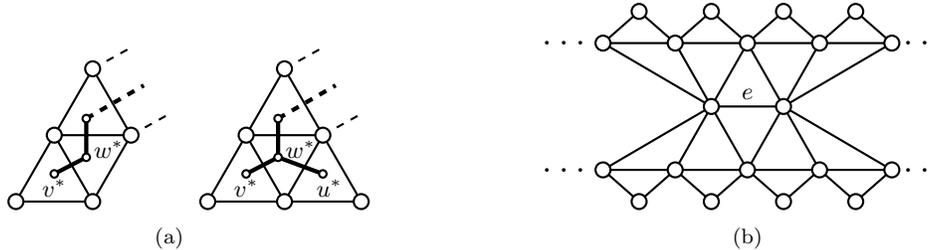
\begin{figure}[!b]
\centering
\begin{subfigure}{0.45\textwidth}
\centering
\begin{tikzpicture}[scale=.85, yscale=.866]
\begin{scope}[scale=1.2]
\draw[thick] (0.5,1) -- (0,0) -- (1,0) -- cycle (0.5,1) -- (1.5,1) -- (1,0) (0.5,1) -- (1,2) -- (1.5,1); 
\draw[thick] (1,2) edge[dashed] (1.5,2.32) (1.5,1) edge[dashed] (2,1.32);
\foreach \i/\j in {1.5/1, 0/0, 1/0, 0.5/1, 1/2}
\draw[thick] (\i,\j) node[uStyle] {};
\end{scope}
\begin{scope}[every node/.style={scale=0.5}, xshift=0.2cm, yshift=0.2cm]
\draw[ultra thick] (0.4,0.3) -- (0.9,0.6) -- (0.9,1.3) (0.9,1.3) edge[dashed] (1.8,1.90);
\foreach \i/\j in {0.4/0.3, 0.9/0.6, 0.9/1.3}
\draw[thick, fill=black] (\i,\j) node[uStyle] {};
\end{scope}
\begin{scope}
	\draw[thick] (0.6,0.25) node {\footnotesize{$v^*$}} (1.45,0.95) node {\footnotesize{$w^*$}};
\end{scope}

\begin{scope}[xshift=3cm, scale=1.2]
\draw[thick] (0.5,1) -- (0,0) -- (1,0) -- cycle (0.5,1) -- (1.5,1) -- (1,0) (0.5,1) -- (1,2) -- (1.5,1) 
	(1,2) edge[dashed] (1.5,2.32) (1.5,1) edge[dashed] (2,1.32)
	(1,0) -- (2,0) -- (1.5,1);
\foreach \i/\j in {1.5/1, 0/0, 1/0, 0.5/1, 1/2, 2/0}
\draw[thick] (\i,\j) node[uStyle] {};
\end{scope}
\begin{scope}[every node/.style={scale=0.5}, xshift=0.2cm, yshift=0.2cm, xshift=3cm]
\draw[ultra thick] (0.4,0.3) -- (0.9,0.6) -- (0.9,1.3) (0.9,1.3) edge[dashed] (1.8,1.9) (0.9,0.6) -- (1.6,0.3);
\foreach \i/\j in {0.4/0.3, 0.9/0.6, 0.9/1.3, 1.6/0.3}
\draw[thick, fill=black] (\i,\j) node[uStyle] {};
\end{scope}
\begin{scope}[xshift=3cm]
	\draw[thick] (0.6,0.25) node {\footnotesize{$v^*$}} (1.45,0.95) node {\footnotesize{$w^*$}} (1.9,0.25) node {\footnotesize{$u^*$}};
\end{scope}
\end{tikzpicture}
\caption{\textcolor{white}{the}}
\label{claim3-fig-a}
\end{subfigure}%
\begin{subfigure}{0.45\textwidth}
\centering
	\begin{tikzpicture}[scale=.80, yscale=1.255]
		\def\ep{.5}
\begin{scope}[scale=1.2, yscale=.7, yshift=.4cm]
\draw[thick] 
	(-0.5,1) -- (0.5,1) 
	(-0.5,1) -- (0,0) -- (0.5,1) 
	(-0.5,1) -- (0,2) -- (0.5,1) 
	(-1,2) -- (0,2) -- (1,2) 
	(-1,0) -- (0,0) -- (1,0) 
	(-0.5,1) -- (-1,2) -- (-2,2) --cycle 
	(0.5,1) -- (1,2) -- (2,2) -- cycle 
	(-0.5,1) -- (-1,0) -- (-2,0) -- cycle 
	(0.5,1) -- (1,0) -- (2,0) -- cycle;
\draw[thick] 
	(-2,2) -- (-1.5,2+\ep) node[uStyle] {} -- (-1,2)
	(-1,2) -- (-.5,2+\ep) node[uStyle] {} -- (0,2)
	(0,2) -- (.5,2+\ep) node[uStyle] {} -- (1,2)
	(1,2) -- (1.5,2+\ep) node[uStyle] {} -- (2,2);
\draw[thick] 
	(-2,0) -- (-1.5,-\ep) node[uStyle] {} -- (-1,0)
	(-1,0) -- (-.5,-\ep) node[uStyle] {} -- (0,0)
	(0,0) -- (.5,-\ep) node[uStyle] {} -- (1,0)
	(1,0) -- (1.5,-\ep) node[uStyle] {} -- (2,0);
\foreach \i/\j in {-0.5/1, 0/0, 0.5/1, 0/2, -1/2, 1/2, -1/0, 1/0, -2/2, 2/2, -2/0, 2/0}
\draw[thick] (\i,\j) node[uStyle] {};
\draw[thick] (-2.5,2) node {\Large{$\dots$}} (2.5,2) node {\Large{$\dots$}} (-2.5,0) node {\Large{$\dots$}} (2.5,0) node {\Large{$\dots$}} (0,1.20) node {\footnotesize{$e$}}; 
\end{scope}
\end{tikzpicture}
\caption{}
\label{claim3-fig-b}
\end{subfigure}
\caption{(a): The end $v^*w^*$ of a longest path in the weak dual $G^*$ (bold) of $G$ (non-bold), where $d(w^*)=2$ (left) and $d(w^*)=3$ (right). Dashed edges indicate that the graph extends further in some unknown fashion. (b):~The structure of $G$ in the final case of \Cref{clm2}.}
\end{figure}

\begin{clmproof}
	We consider the end of a longest path $P$ in the weak planar dual $H^*$ of $H$.  (Recall that $V(H^*)$ consists of the $3$-faces of $H$ and that 
	$2$ vertices are adjacent in $H^*$ if their corresponding $3$-faces share an edge.  So $H^*$ is a tree with $\Delta(H^*)\le 3$.)  Let $v^*,w^*$ 
	be the final $2$ vertices at some end of $P$, with $v^*$ last.  Clearly, $d_{H^*}(v^*)=1$.  If $d_{H^*}(w^*)=2$, then face $v^*$ of $H^*$ 
	contains an adjacent $3$-vertex and $2$-vertex; see Figure~\ref{claim3-fig-a} (left).
	Suppose instead that $d_{H^*}(w^*)=3$.  Now $2$ of the $3$ neighbors of $w^*$, say $v^*$ and $u^*$, must be leaves in $H^*$, since $w^*$ is the
	penultimate vertex on a longest path in $H^*$. Thus the vertex common to $v^*$ and $u^*$ is a $4$-vertex in $H$ and it has (distinct) 
	$2$-neighbors on the faces corresponding to both $u^*$ and $v^*$; see Figure~\ref{claim3-fig-a} (right).

	Finally, suppose that $|H|\ge 5$, and fix an arbitrary edge $e$.  Our main idea is to find an endpoint $v^*$ of a longest path $P$ in $H^*$ 
	such that the $3$-face corresponding to $v^*$ contains no endpoint of $e$.  Suppose instead that every instance of (a) and (b) 
	contains an endpoint of $e$.  We claim that $H$ is an induced subgraph of a graph with the form 
	shown in Figure~\ref{claim3-fig-b}.  Denote the endpoints of $e$ by $\{w_1,w_2\}$. 
	Suppose instead that a component $C$ of $H-\{w_1,w_2\}$ has at least $2$ vertices.  Since $H$ is maximal outerplanar, it is $2$-connected, 
	so $C$ has $2$ neighbors in $N[\{w_1,w_2\}]$; call them $x_1$, $x_2$.  By symmetry, we assume $x_1,x_2\in N(w_1)$. 
	Now we proceed by induction on $H[V(C)\cup\{w_1,x_1,x_2\}]$ with specified edge $x_1x_2$.  This proves the claim.  Thus, the leftmost or 
	rightmost vertex in $H$ lies in the desired copy of (a) or (b).
\end{clmproof}

\Cref{clm2} implies the following helpful lemma: If $H$ is outerplanar, then $H$ contains either (a) a $1^-$-vertex or (b) a $3^-$-vertex with a 
$2$-neighbor or (c) a $4$-vertex with two $2$-neighbors.  This result was first proved by Hackmann and Kemnitz~\cite{HK} and the proof was later 
simplified by Fabrici~\cite{fabrici}.  Our proof follows the same approach; however, later our added ability to avoid the endpoints of edge $e$ will 
prove crucial.

    \begin{figure}[!h]
        \centering
	    \begin{tikzpicture}[yscale=1.732,scale=.9]
        \begin{scope}
		\draw[thick] (0,0) node[uStyle, fill=\mgray] (a) {} -- (1,0) node[uStyle] {} -- (0.5,-0.5) node[uStyle, fill=\mgray] (b) {};
		\drawthick (a) edge (b);
        \end{scope}

        \begin{scope}[xshift=2.5cm]
		\draw[thick] (0.5,-0.5) edge (0,0) (0,0) node[uStyle] (a) {} -- (1,0) node[uStyle, fill=\mgray] {} -- (0.5,-0.5) node[uStyle] (b) {} -- (-0.5,-0.5) node[uStyle, fill=\mgray] {} -- cycle; 
		\drawthick (a) -- (b);
        \end{scope}

        \begin{scope}[xshift=5cm]
		\draw[thick] (1,0) edge (0.5,-0.5) (0.5,-0.5) -- (0,0) (0,0) node[uStyle, fill=\mgray] (a) {} -- (1,0) node[uStyle] (b) {} -- (1.5,-0.5) node[uStyle, fill=\mgray] {} -- (0.5,-0.5) node[uStyle] {} -- (-0.5,-0.5) node[uStyle, fill=\mgray] (d) {}; 
	\drawthick (a) edge (d); 
        \end{scope}

        \begin{scope}[xshift=8cm]
		\draw[thick] (0,0) edge (0.5,0.5) (0.5,0.5) node[uStyle] (a2) {} -- (1,0) (-0.5,-0.5) node[uStyle, fill=\mgray] (a) {} -- (0,0) -- (0.5,-0.5) (0,0) node[uStyle] (b2) {} -- (1,0) node[uStyle, fill=\mgray] (b) {} -- (1.5,-0.5) node[uStyle] {} -- (0.5,-0.5) node[uStyle, fill=\mgray] (c) {} 
		(0.5,0.1) node {\footnotesize{$e_1$}} (1.5,-0.25) node {\footnotesize{$e_2$}};
		\drawthick (a) -- (c) -- (b) (a2) -- (b2); 
        \end{scope}

        \begin{scope}[xshift=11.5cm]
		\draw[thick] (-.5,-.5) -- (-1,0) node[uStyle] (a1) {} -- (0,0) edge (0.5,0.5) (0.5,0.5) node[uStyle] (a2) {} -- (1,0) 
		(-0.5,-0.5) node[uStyle, fill=\mgray] (a) {} -- (0,0) -- (0.5,-0.5) (0,0) node[uStyle, fill=\mgray] (b2) {} 
		-- (1,0) node[uStyle, fill=\mgray] (b) {} (0.5,-0.5) node[uStyle] (c) {} (a) -- (c) -- (b); 
		\drawthick (a) -- (b2) -- (b);
        \end{scope}

        \end{tikzpicture}
        \captionsetup{width=.675\textwidth}
        \caption{The 5 base cases for Claim~\ref{clm3}. Vertices of $F_1$ are shown in white, and those of $F_2$ are shown in gray.  Edges induced by each $F_i$ are drawn in bold.}
        \label{base-case-fig}
    \end{figure}
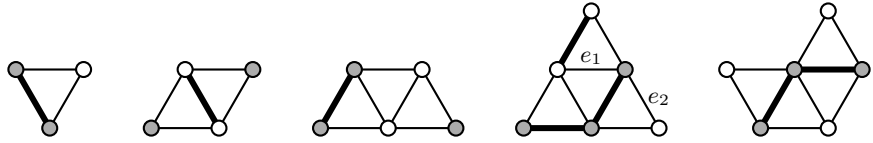
\begin{clm}
	Let $H$ be a maximal outerplanar graph (so with $|H|\ge 3$), with a specified edge $e$.
	There exists a forest equipartition $F_1\uplus F_2$ for $H$ such that 
	$d_{F_i}(v)\le d_H(v)/2$ for each $i\in[2]$ and all $v\in F_i$.  Furthermore, if $|H|\ge 4$, then for each endpoint $w$ of $e$ and 
	$F_i\ni w$, we have $d_{F_i}(w)\le (d_H(w)-1)/2$.
	\label{clm3}
\end{clm}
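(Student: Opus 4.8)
The plan is to prove \Cref{clm3} by induction on $|H|$. The base cases are the maximal outerplanar graphs with $|H|\le 6$: for each, and each essentially different location of the specified edge $e$, one writes down a valid forest equipartition by hand, as in Figure~\ref{base-case-fig} (and when $|H|\le 3$ the edge $e$ imposes no condition). For the step, suppose $|H|\ge 7$ and apply \Cref{clm2} to $H$ with the edge $e$: this produces either (a) a $3$-vertex $x$ with a $2$-neighbor $y$, or (b) a $4$-vertex $x$ with two $2$-neighbors $y_1,y_2$, in each case with none of the named vertices an endpoint of $e$. Because $H$ is maximal outerplanar (so every inner face is a triangle), in case (a) the other two neighbors $a,b$ of $x$ satisfy $ab\in E(H)$ and, relabeling, $y\sim a$; in case (b) the other two neighbors $a,b$ satisfy $ab\in E(H)$, $y_1\sim a$, and $y_2\sim b$. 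Let $H'$ be $H$ minus the ``ear'' ($x$ together with its $2$-neighbor(s)); deleting a degree-$2$ vertex lying in a triangle, and then the now-degree-$2$ vertex $x$, shows that $H'$ is again maximal outerplanar, with $|H'|\ge 4$.

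Since the endpoints of $e$ were avoided, $e$ remains an edge of $H'$, so I apply the induction hypothesis to $(H',e)$ to get a forest equipartition $F_1'\uplus F_2'$ with $d_{F_i'}(v)\le d_{H'}(v)/2$ everywhere and $d_{F_i'}(w)\le (d_{H'}(w)-1)/2$ at the endpoints $w$ of $e$. The crux is reinserting the ear. In case (a) I put $x$ in the part avoiding $b$ and $y$ in the part containing $b$; then $x$ and $y$ each have at most one neighbor in their part (so neither triangle $xab$ nor $xya$ closes), the size balance is preserved since they go to opposite parts, $a$ loses its edges to both $x,y$ but regains at most one (regaining both closes $xya$), $b$ regains nothing (it lies opposite $x$, and $b\not\sim y$), and the bound $d_{F_i}(\cdot)\le d_{F_i'}(\cdot)+(\text{edges regained})$ then lands inside the required budget --- the point being that at each vertex the number of edges lost and the number regained line up with whichever of the two budgets ($d/2$ or $(d-1)/2$) applies. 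In case (b) I am forced to split $\{x,y_1,y_2\}$ as two vertices to one part and one to the other, in the direction prescribed by the sizes of $F_1',F_2'$; one picks the singleton and, when free, the direction, so as to meet three acyclicity requirements at once: $x$ avoids the part holding both $a$ and $b$ (else $xab$ closes), at most one of $\{x,y_1\}$ lands in the part of $a$ (else $xy_1a$), and at most one of $\{x,y_2\}$ lands in the part of $b$ (else $xy_2b$). A brief split into the subcases ``$a,b$ in the same part'' and ``$a,b$ in different parts'' shows a compatible choice always exists, and then each attachment vertex again loses $2$ edges and regains at most $1$, so the degree bounds follow as in case (a).

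I expect the reinsertion in case (b) to be the main obstacle: the partition returned by the induction hypothesis fixes the locations of $a$ and $b$, and the two-plus-one split of the ear must be threaded around those locations while simultaneously respecting acyclicity, the size balance, and all degree budgets --- including the tighter ones at the (at most two) endpoints of $e$, which may coincide with $a$ or $b$. This is a finite but fussy case analysis, and it is exactly why the claim promises the sharp bound only at the endpoints of one edge: a single recursion protects at most two vertices, and that is precisely what the reinsertion can absorb. The induction starts at $|H|\ge 7$ rather than lower because a recursion that reached $K_3$ could not supply the sharpened endpoint bound, so all graphs with $|H|\le 6$ are treated as base cases.
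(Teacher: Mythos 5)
Your proposal is correct and takes essentially the same route as the paper's proof: induction on $|H|$ with hand-checked base cases $|H|\le 6$, deletion of the configuration supplied by \Cref{clm2} (using its edge-avoidance refinement so that $e$ survives into $H'$ and the sharper bound at its endpoints propagates), and reinsertion of the ear with the same parity-respecting assignment. Your three acyclicity constraints in case (b) amount exactly to the paper's explicit two subcases (at least one of $a,b$ in the smaller part versus both in the larger part), and the "lose two, regain at most one" accounting at $a,b$ is precisely how the paper verifies the degree budgets, so there is no gap.
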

\begin{clmproof}
	We use induction on $|H|$.  
	Our base case is when $|H|\in\{3,4,5,6\}$.  
	The 5 possibilities for $H$ are shown in Figure~\ref{base-case-fig}, along with the corresponding forest equipartitions.
	We remark on a few of the cases.
	If $|H|=3$, then it suffices to have $d_{F_i}(v)\le 1 = d_H(v)/2$; that is, we do not treat the endpoints of $e$ differently from the other vertex.
	If $|H|=5$, then by symmetry we assume that $e$ is not incident to the leftmost vertex.
	If $|H|=6$ and $\Delta(H)=4$, then by symmetry we assume that $e$ is either $e_1$ or $e_2$, as labeled in the figure.
	Now we turn to the induction step; for this we consider (a) and (b) in \Cref{clm2}.

    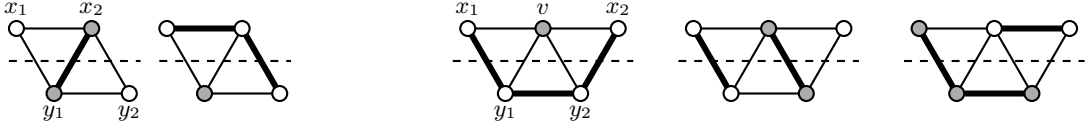
\begin{figure}[!t]
        \centering
        \begin{tikzpicture}[yscale=1.73]
        \begin{scope}
		\draw[thick] (0,0) node[uStyle] {} -- (1,0) node[uStyle, fill=\mgray] (a) {} -- (1.5,-0.5) node[uStyle] {} -- (0.5,-0.5) node[uStyle, fill=\mgray] (b) {} -- cycle (0,0.15) node {\footnotesize{$x_1$}} (1,0.15) node {\footnotesize{$x_2$}} (1.5,-0.65) node {\footnotesize{$y_2$}} (0.5,-0.65) node {\footnotesize{$y_1$}}; 
		\drawthick (a) -- (b); 
		\draw[dashed, thick] (-.1,-.25) -- (1.65,-.25);
         \end{scope}

         \begin{scope}[xshift=2cm]
		 \draw[thick] (1,0) -- (0.5,-0.5) (0,0) node[uStyle] (a) {} -- (1,0) node[uStyle] (b) {} -- (1.5,-0.5) node[uStyle] (c) {} -- (0.5,-0.5) node[uStyle, fill=\mgray] {} -- cycle; 
		 \drawthick (a) -- (b) -- (c);
		 \draw[dashed, thick] (-.1,-.25) -- (1.65,-.25);
         \end{scope}

         \begin{scope}[xshift=6cm]
		 \draw[thick] (1,0) -- (2,0) node[uStyle] (a) {} -- (1.5,-0.5) (1,0) -- (0.5,-0.5) (0,0) node[uStyle] (b) {} -- (1,0) node[uStyle, fill=\mgray] {} -- (1.5,-0.5) node[uStyle] (c) {} -- (0.5,-0.5) node[uStyle] (d) {} -- cycle (0,0.15) node {\footnotesize{$x_1$}} (1,0.15) node {\footnotesize{$v$}} (1.5,-0.65) node {\footnotesize{$y_2$}} (0.5,-0.65) node {\footnotesize{$y_1$}} (2,0.15) node {\footnotesize{$x_2$}}; 
		 \drawthick (a) -- (c)  -- (d) -- (b);
		 \draw[dashed, thick] (-.2,-.25) -- (2.2,-.25);
         \end{scope}

         \begin{scope}[xshift=9cm]
		 \draw[thick] (1,0) -- (2,0) node[uStyle] (a) {} -- (1.5,-0.5) (1,0) -- (0.5,-0.5) (0,0) node[uStyle] (b) {} -- (1,0) node[uStyle, fill=\mgray] (c) {} -- (1.5,-0.5) node[uStyle, fill=\mgray] (d) {} -- (0.5,-0.5) node[uStyle] (e) {} -- cycle; 
		 \drawthick (b) -- (e) (c) -- (d); 
		 \draw[dashed, thick] (-.2,-.25) -- (2.2,-.25);
         \end{scope}

         \begin{scope}[xshift=12cm]
		 \draw[thick] (1,0) -- (2,0) node[uStyle] (a1) {} -- (1.5,-0.5) (1,0) -- (0.5,-0.5) (0,0) node[uStyle, fill=\mgray] (a) {} -- (1,0) node[uStyle] (b1) {} -- (1.5,-0.5) node[uStyle, fill=\mgray] (b) {} -- (0.5,-0.5) node[uStyle, fill=\mgray] (c) {} -- cycle; 
		 \drawthick (b) -- (c) -- (a) (a1) -- (b1); 
		 \draw[dashed, thick] (-.2,-.25) -- (2.2,-.25);
         \end{scope}
        \end{tikzpicture}
	    \caption{The 5 cases in the two induction steps for \Cref{clm3}. Again vertices of $F_1$ (resp.~$F_2$) are shown in white  (resp.~gray) and edges induced by each $F_i$ are drawn in bold.  Vertices below the dashed line are already assigned to one forest or the other (along with possible additional vertices that are unshown), and vertices above the dashed line are now assigned to one forest or the other, as part of the induction step.}
        \label{induction-step-fig}
    \end{figure}

	First we consider (a) in \Cref{clm2}; see the left of \Cref{induction-step-fig}.  Denote the $2$-vertex by $x_1$ and the $3$-vertex by $x_2$.  
	Since $H$ is maximal outerplanar, vertices $x_1,x_2$ have a common neighbor and we denote it by $y_1$; we denote the final neighbor of $x_2$ by $y_2$.  
	Let $H':=H-\{x_1,x_2\}$.  By the induction hypothesis, $H'$ has a forest equipartition $F_1'\uplus F_2'$ such that 
	$d_{H'[F_i]}(v)\le d_H'(v)/2$ for each $i\in [2]$ and each $v\in F'_i$.  By symmetry, we assume that $y_2\in F_1'$.
	Now let $F_1:=F_1'\cup\{x_1\}$ and let $F_2:=F_2'\cup\{x_2\}$.  Clearly, $F_1\uplus F_2$ is a forest equipartition of $V(H)$.  
	It is also straightforward to check that the degree bounds hold, since $d_{F_i}(x_i)\le 1$ for each $i\in[2]$, and
	each $y_i$ inherits at most one additional edge in its forest.

	Now we consider (b) in \Cref{clm2}; see the right of \Cref{induction-step-fig}.  We denote the $4$-vertex by $v$ and its two $2$-neighbors 
	by $x_1$ and $x_2$.  Since $H$ is maximal outerplanar, we assume that $v,x_1$ have a common neighbor $y_1$; and that $v,x_2$ have
	a common neighbor $y_2$; furthermore, $y_1y_2\in E(H)$.  Let $H':=H-\{v,x_1,x_2\}$ and let $F_1'\uplus F_2'$ be the forest equipartition of $V(H')$
	guaranteed by the induction hypothesis.  By symmetry, we assume that $|F_1'|\le |F_2'|$, so we will need to add two of $v,x_1,x_2$ to $F_1'$
	and add one to $F_2'$.  If $y_1\in F_1'$, then let $F_1:=F_1'\cup \{x_1,x_2\}$ and let $F_2:=F_2'\cup\{v\}$.  So we assume that $y_1\in F_2'$;
	by symmetry between $y_1$ and $y_2$, we also assume that $y_2\in F_2'$.  Now we let $F_1:=F_1'\cup \{v,x_2\}$ and let $F_2:=F_2'\cup\{x_1\}$.
	Clearly, $F_i$ is a forest for each $i\in[2]$.  Furthermore, each of $v,x_1,x_2$ has at most one incident edge in its forest, and each $y_i$
	has at most one additional incident edge in its forest.  Thus, the desired degree bounds hold.
\end{clmproof}

By \Cref{clm1} we add edges to $G$ to get a maximal outerplanar graph $G'$ with 
$d_{G'}(v)\le 2\ceil{n/6}+3$ for all vertices $v$, but at most two: $w_1,w_2$; and $d_{G'}(w_i)\le \max\{2\ceil{n/6}+4,d_G(w)+1\}$ 
for each $w_i$.
Furthermore, if equality holds for each $i\in[2]$, then $w_1w_2\in E(G')$.
By \Cref{clm3}, with $e=w_1w_2$, we get a forest equipartition $F_1\uplus F_2$ for $G'$ with 
$d_{F_i}(v)\le \max\{\ceil{n/6}+1,\floor{d_G(v)/2}\}$ for all $i\in[2]$ and all $v\in F_i$.
(If $w_1$ exists, but $w_2$ does not, then we choose $e$ to be an arbitrary edge incident with $w_1$.)
\end{proof}

In the introduction, we promised to sketch the proof of Pemmaraju's equipartitioning result.  It follows from simplified versions of Claims~2 and~3.
In particular, in Claim~2 we do not need edge $e$ (so it suffices to use the earlier version due to Hackmann and Kemnitz), 
and in Claim~3 we do not guarantee any bounds on the degrees.  We just inherit the bound $d_{F_i}(v)\le \Delta(G)$.

\section{Proving the Main Theorem}
\label{s=6-sec}

By \Cref{few-big-lem}, the number of dangerous vertices in $G$ is either $0$, $1$, or $2$.
The first case is (now) easy; we get an equipartition into $2$ forests, and equitably $3$-color each.
In this section we handle the remaining $2$ cases, which finishes the proof of the Main Theorem.

\begin{defn}
	\label{n_i-remark}
In what follows we often explicitly construct an equitable $6$-coloring.  Thus, we let $n_j:=\floor{(n+j-1)/6}$, for each $j\in[6]$.
We partition $V(G)$ as $I_1\uplus \cdots \uplus I_6$ with $|I_j|=n_j$, and use color $j$ on $I_j$.  Note that $\sum_{j=1}^6 n_j = n$.
	To see this, let $k:=\floor{n/6}$, and let $\ell:=n-6k$.  Now $\sum_{j=1}^6n_j=\sum_{j=1}^6\floor{(6k+\ell+j-1)/6} 
	= 6k+\sum_{j=0}^5\floor{(j+\ell)/6} = 6k+\sum_{j=0}^{5-\ell}0+\sum_{j=6-\ell}^51 = 6k+\ell=n$, as desired.
\end{defn}

In the remainder of the proof, we assume that $G$ is maximal outerplanar; this is possible by Claim~1~in the proof of the Partitioning Lemma.
Formally, we begin the proof of the Main Theorem with an arbitrary outerplanar graph $G$, and extend $G$ to a maximal outerplanar graph $G'$
by Claim~1.  If $\Delta(G')\le 2\ceil{n/6}+4$ and all vertices achieving this bound are endpoints of a common edge, then we proceed via Claims $2$ and~$3$
in the proof of the Partitioning Lemma, as in Section~2, finishing with an equitable $3$-coloring of each $F_i$.  Otherwise, after applying Claim~1, we
have $2$ or $1$ dangerous vertices and finish via the next $2$ subsections.

We assume implicitly that we are given an outerplane embedding of $G$.
We often use the following easy observation. 

\begin{lem}
	Each outerplanar graph $H$ has a $3$-coloring $\vph$ with $|\vph^{-1}(j)|\le |H|/2$ for all $j\in[3]$.
	That is, each color class has size at most $|H|/2$.
	\label{small-classes-obs}
\end{lem}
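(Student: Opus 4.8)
The plan is to reduce to the case that $H$ is maximal outerplanar and then read off the bound from the spanning cycle that bounds its outer face. First I would add edges to $H$, keeping an outerplane embedding, until we reach a maximal outerplanar supergraph $\widehat H$ on the same vertex set (first add edges along the outer walk to make the graph $2$-connected, then triangulate each bounded face, exactly as in Claim~1 of the Partitioning Lemma but without worrying about degrees). Since $\widehat H$ is outerplanar it is $3$-colorable, and any proper $3$-coloring $\varphi$ of $\widehat H$ is also a proper $3$-coloring of $H$ with exactly the same color classes; as $|\widehat H|=|H|$, it therefore suffices to prove the bound for $\widehat H$. So, renaming, we may assume $H$ itself is maximal outerplanar with $n:=|H|$, and (the cases $n\le 2$ being trivial) that $n\ge 3$.

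Next I would invoke the standard fact that the boundary of the outer face of a maximal outerplanar graph on $n\ge 3$ vertices is a cycle $C$ through all $n$ vertices; in particular $E(C)\subseteq E(H)$ and $C\cong C_n$. Fix any proper $3$-coloring $\varphi$ of $H$ and any $j\in[3]$. The class $\varphi^{-1}(j)$ is independent in $H$, hence independent in the spanning cycle $C$. Since a largest independent set in $C_n$ has $\lfloor n/2\rfloor$ vertices, we get $|\varphi^{-1}(j)|\le\lfloor n/2\rfloor\le n/2$, which is exactly the claim.

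There is no real obstacle here; the argument is short. The only points needing a moment's care are checking that the edge-additions yielding $\widehat H$ can be carried out inside an outerplane embedding without changing the vertex set (so that the target bound $|H|/2$ is unaffected), and citing the two classical facts about maximal outerplanar graphs used above: that they are $3$-colorable, and that their outer boundary is a spanning cycle.
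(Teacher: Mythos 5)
Your argument is correct and is essentially the paper's own proof: both reduce to a maximal outerplanar supergraph on the same vertex set, $3$-color it, and bound each color class by the independence number $\lfloor n/2\rfloor$ of the Hamiltonian outer cycle. No issues.
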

\begin{proof}
In every $3$-coloring of a maximal outerplanar graph $H'$, each color class has size at most $|H'|/2$, since the boundary of the outer face is a 
Hamiltonian cycle, which has independence number $\floor{|H'|/2}$.  
If $H$ is not maximal outerplanar, then we add edges (but no vertices) to reach a maximal outerplanar supergraph $H'$, then $3$-color $H'$ and
deduce the result for $H$.
\end{proof}

\subsection{Two Dangerous Vertices}
\label{two-dangerous-sec}
We first handle the case that $G$ has $2$ dangerous vertices.  Actually, we handle the slightly more general case that there exist vertices 
$w_1$ and $w_2$ such that $d(w_i)\ge 2\ceil{n/6}+3$.  This generality will help us in the next, and final, subsection of the proof.

\begin{figure}[!b]
    \centering
    \begin{tikzpicture}
    \begin{scope}
    \draw[thick] (6,0) -- (6,-1) -- (5,0) (6,-2) -- (6,-1) -- (7,-2) (7,0) -- (7,-1) -- (7,-2) (7,-1) -- (6,0) (5.5,2) edge[bend right=45] (5.5,-4) 
	    (1,0) -- (5.5,2) -- (10,0) (4,0) -- (4.2,0) (5,0) -- (4.7,0); 
    \foreach \i in {1,...,3}
    \draw[thick] (\i,0) -- (\i+1,0);
    \foreach \i in {5,...,9}
    \draw[thick] (\i,0) -- (\i+1,0);
    \foreach \i in {2,4,6,8,10}
    \draw[thick] (\i,0) node[uStyle, fill = black] {};
    \foreach \i in {1,3,7,9}
    \draw[thick] (\i,0) node[uStyle, fill = myothergrayer] {}; 
	    \draw[thick] (5,0) node[xStyle] {} (5,0) -- (5,-2) (6,-1) node[yStyle] {} (7,-1) node[yStyle] {} (5.7,-1) node {\footnotesize{$y_1$}} (7.3,-1) node {\footnotesize{\,$y_2$}} (5.5,2) node[xStyle] {} (5.5,2.3) node {\footnotesize{$w_1$}};
    \end{scope}

    \begin{scope}[yshift=-2cm]
    \draw[thick] (4,0) -- (4.2,0) (5,0) -- (4.7,0) (1,0) -- (5.5,-2) -- (10,0);
    \foreach \i in {1,...,3}
    \draw[thick] (\i,0) -- (\i+1,0);
    \foreach \i in {5,...,9}
    \draw[thick] (\i,0) -- (\i+1,0);
    \foreach \i in {2,4,6,8,10}
    \draw[thick] (\i,0) node[uStyle] {};
    \foreach \i in {1,3,7,9}
	    \draw[thick] (\i,0) node[uStyle, fill=mygrayer] {}; \draw[thick] (5,0) node[xStyle] {} (5.5,-2) node[xStyle] {} (5.5,-2.3) node {\footnotesize{$w_2$}};
    \end{scope}
    \end{tikzpicture}
    \captionsetup{width=.675\textwidth}
    \caption{An example of the graph in \Cref{2-dangerous-lem}. Square vertices are not part of $S'_1\cup S'_2$. The black and dark gray vertices partition $S'_1$ into $2$ independent sets (namely, $I_5\setminus\{w_2\}$ and $S''_1$. The white and light gray vertices partition $S'_2$ into $2$ independent sets (namely, $I_6\setminus\{w_1\}$ and $S''_2$. Diamond vertices do not necessarily belong to $N[w_1]\cup N[w_2]$. Edge $w_1w_2$ might or might not be present.}
    \label{two-dangerous-vrts-fig}
\end{figure}
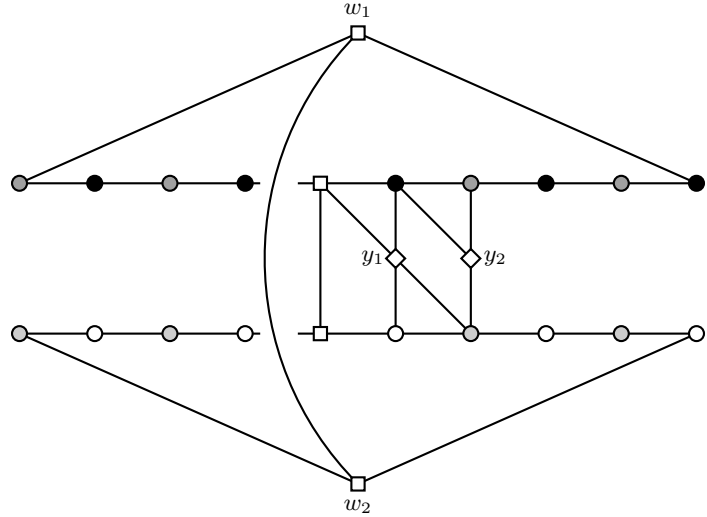

\begin{lem}
	If $G$ is outerplanar with vertices $w_1,w_2$ such that $d(w_i)\ge 2\ceil{n/6}+3$ for each $i\in[2]$, then $G$ has an equitable $6$-coloring.
	\label{2-dangerous-lem}
\end{lem}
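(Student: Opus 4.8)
Let $w_1,w_2$ be the two high-degree vertices with $d(w_i)\ge 2\ceil{n/6}+3$. The strategy is to colour, with colours $5$ and $6$, two independent sets $I_5\ni w_2$ and $I_6\ni w_1$, each drawn almost entirely from $N[w_1]\cup N[w_2]$, of the correct sizes $n_5=\floor{(n+4)/6}$ and $n_6=\floor{(n+5)/6}$ (in the notation of \Cref{n_i-remark}); and then to equitably $3$-colour, with colours $1,2,3$, the remaining graph. First I would record the structural facts. Since $G$ is outerplanar, $G[N(w_i)]$ is a disjoint union of paths by \Cref{linear-forest}, so $N[w_i]$ contains an independent set of size at least $1+\ceil{d(w_i)/2}\ge \ceil{n/6}+3$ that contains $w_i$. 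Also $G$ has no $K_{2,3}$, so $|N[w_1]\cap N[w_2]|$ is small (at most $d(w_1)+d(w_2)-n$ common neighbours plus a bounded correction, and in any case we never need both $w_1\in N(w_2)$-type degeneracies at once). The point of the embedding is that we may assume $G$ is a maximal outerplanar graph (Claim~1 of the Partitioning Lemma), so the boundary of the outer face is a Hamilton cycle, and this lets us understand exactly which vertices lie outside $N[w_1]\cup N[w_2]$: they fall into a bounded number of ``intervals'' along the Hamilton cycle, separated by $w_1,w_2$ and (at most two) common neighbours $y_1,y_2$, exactly as depicted in \Cref{two-dangerous-vrts-fig}.

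\textbf{Building $I_5$ and $I_6$.} Inside $N[w_1]$ pick an independent set containing $w_1$ of size $\ge \ceil{n/6}+3$, and similarly inside $N[w_2]$ one containing $w_2$; since the two neighbourhoods overlap in only a few vertices, we can make these two independent sets disjoint after discarding a bounded number of vertices, and still have at least $\ceil{n/6}+1 \ge n_6$ vertices in each (using $n\ge$ a small constant, which we may assume since small cases are finite and $G$ is outerplanar with the stated degrees — in fact the degree hypothesis already forces $n$ not too small). Put $w_1$ into $I_6$ and $w_2$ into $I_5$, then top up $I_5$ and $I_6$ to their exact target sizes $n_5,n_6$ using further vertices of $N[w_1]\cup N[w_2]$, taking care that $I_5$ stays independent of $w_1$ and $I_6$ stays independent of $w_2$ (possible because each $N[w_i]$ furnishes far more than $n_j$ independent vertices, as $\ceil{d(w_i)/2}\gg n/6$ is false — rather it is $\approx n/6$, which is exactly why the counts are tight and why we want both $w_1,w_2$ to help). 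A cleaner way: since $d(w_1)+d(w_2)\ge 2(2\ceil{n/6}+3)\ge 2n/3+O(1)$, together $N[w_1]\cup N[w_2]$ has enough independent structure to carve out the two sets; the bookkeeping is the routine part.

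\textbf{Finishing with colours $1,2,3$.} Let $G'':=G-(I_5\cup I_6)$; then $|G''| = n-n_5-n_6 = n_1+n_2+n_3+n_4$. Now every vertex of $G''$ has bounded degree: the only large-degree vertices of $G$ were $w_1,w_2$, which are gone, so $\Delta(G'')$ is small — certainly $\le 2\ceil{|G''|/6}+3$ once we check the arithmetic, since a vertex $v\ne w_1,w_2$ satisfies $d_{G}(v)\le 2\ceil{n/6}+2$ by \Cref{few-big-lem} and $|G''|\approx 2n/3$. Hence by the Corollary to the Partitioning Lemma, $G''$ has an equitable $6$-colouring; but we want a $4$-colouring (colours $1,2,3$ plus one more) — more precisely we want to equipartition $V(G'')$ into two forests $F_1\uplus F_2$ with $\Delta(F_i)$ small and equitably $3$-colour... no: $|G''| = n_1+\cdots+n_4$, so we need an equitable \emph{$4$-colouring} of $G''$ with classes of sizes $n_1,n_2,n_3,n_4$. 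But $G''$ may have large degree relative to $n/4$. The honest route is instead: do not delete both of $I_5,I_6$ up front. Rather, after removing $I_5\cup I_6$, apply \Cref{small-classes-obs} to get a $3$-colouring of $G''$ (with colours $1,2,3$) in which each class has size $\le |G''|/2$; since $|G''|\le 2n/3$, each class has size $\le n/3 < n_1+n_2$ — then rebalance the three classes among the four target sizes $n_1,n_2,n_3,n_4$ by moving vertices, which is possible because no class is too big. This rebalancing — showing that a $3$-colouring of $G''$ with all classes $\le|G''|/2$ can be recoloured into a proper colouring with class sizes exactly $n_1,\dots,n_4$ (equivalently, that $G''$ has an equitable $4$-colouring) — is the step I expect to be the main obstacle, since $|G''|/4$ can be much smaller than $\Delta(G'')$ in general; one resolves it by noting that $G''$ is outerplanar with $\Delta(G'')$ bounded by $2\ceil{|G''|/6}+O(1)\le$ roughly $|G''|/3$, so in fact $\alpha_v(G'')\ge\floor{|G''|/4}$ for all $v$ and the Tree-Theorem-flavoured bound (or a direct argument via equipartition into forests) gives the equitable $4$-colouring. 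Combining the colour classes $I_1,\dots,I_4$ on $G''$ with $I_5,I_6$ yields the equitable $6$-colouring of $G$.
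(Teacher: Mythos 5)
Your proposal follows the paper's general skeleton (use colors $5,6$ on independent sets tied to $w_1,w_2$, then handle the bulk with a $3$-coloring whose classes are at most half the order), but both of its load-bearing steps have genuine gaps. First, the construction of $I_5,I_6$ rests on a false structural claim: an independent set that \emph{contains} $w_i$ cannot contain any vertex of $N(w_i)$, so ``inside $N[w_i]$ pick an independent set containing $w_i$ of size at least $1+\ceil{d(w_i)/2}$'' is impossible --- any such set is just $\{w_i\}$. The only way to make the sizes work is the crossed assignment: $I_5$ must be $w_2$ together with an independent subset of $N(w_1)$ (chosen away from $N[N[w_2]\setminus\{w_1\}]$ so that it really is independent and avoids conflicts), and symmetrically $I_6$ is $w_1$ plus an independent subset of $N(w_2)$. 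Your opening sentence gestures at this ($I_5\ni w_2$, $I_6\ni w_1$), but the construction you actually give does not implement it, and the ``bookkeeping'' you defer (choosing the subsets of $N(w_1)$, $N(w_2)$ as \emph{consecutive} blocks, controlling the at most two vertices $y_1,y_2$ with neighbors on both sides, etc.) is the real content of the proof, not a routine afterthought.

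Second, the finishing step does not go through. After deleting $I_5\cup I_6$ the graph $G''$ has order $n_1+n_2+n_3+n_4\approx 2n/3$, while a non-dangerous vertex may still have degree up to $2\ceil{n/6}+2\approx |G''|/2$; so your claimed bound $\Delta(G'')\le 2\ceil{|G''|/6}+3\approx |G''|/3$ is arithmetically false, and the fallback ``$\alpha_v(G'')\ge\floor{|G''|/4}$ for all $v$, hence $G''$ has an equitable $4$-coloring'' is exactly the open statement in \Cref{45-conj} for $s=4$ --- neither the Tree Theorem (which applies to forests and only for $s\ge 3$ colors per forest) nor the Partitioning Lemma yields an equitable $4$-coloring here, and the proposed ``rebalancing'' of a $3$-coloring into exact class sizes $n_1,\dots,n_4$ is precisely the difficulty, not a remark. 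The paper avoids ever needing an equitable $4$-coloring: it withholds an \emph{independent} buffer $S_1''\cup S_2''\subseteq N(w_1)\cup N(w_2)$ of size $n_1+n_4$, $3$-colors only the subgraph $G'$ of order $n_2+n_3$ (so each class has size at most $n_2$ by \Cref{small-classes-obs}), proves via outerplanarity that at most one vertex of $G'$ has more than one neighbor in the buffer, and then greedily distributes $n_1$ buffer vertices among colors $1,2,3$ and puts color $4$ entirely inside the buffer. Some device of this kind is needed to replace your last step.
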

\begin{proof}
	We begin with a proof sketch.
	Recall the definitions of $n_j$ and $I_j$ from \Cref{n_i-remark}, and that $I_j$ uses color $j$.  We pick $I_5$ within $\{w_2\}\cup N(w_1)$ and 
	pick $I_6$ within
	$\{w_1\}\cup N(w_2)$.  We $3$-color with colors $1,2,3$ a subgraph $G'$ of order $n_2+n_3$ that includes all of $G\setminus(N[w_1]\cup N[w_2])$.
	By \Cref{small-classes-obs}, each color class of $G'$ has size at most $\floor{|G'|/2} = \floor{(n_2+n_3)/2}=n_2$.  
	We permute colors (if needed) so that the biggest, second biggest, and smallest of these color classes of $G'$ use, respectively, 
	colors 3, 2, and 1.  Finally, we color the remaining uncolored vertices, which are all in $N(w_1)\cup N(w_2)$, with colors $1,2,3,4$.  

	For this final step, we want all these uncolored vertices to form an independent set; we call this set $S_1''\cup S_2''$.  
	Because color $4$ is only used on $S_1''\cup S_2''$, color $4$ can go on any subset of size $n_4$, so we assign color $4$ last.  
	The total number of vertices to be colored with $1,2,3$ is $n_1+n_2+n_3$;
	and $n_2+n_3$ of these are in $G'$.  So we need to assign colors $1,2,3$ to a total of $n_1$ vertices in $S_1''\cup S_2''$.  We can
	do this greedily, but this claim requires justification, which we now provide.
	\smallskip

	Let $S_i:=N(w_i)\setminus N[N[w_{3-i}]\setminus \{w_i\}]$, for each $i\in [2]$.  Note, since $G$ has no $K_{2,3}$-minor, that 
	$|S_i|\ge d(w_i)-3 \ge 2\ceil{n/6} = 2n_6$.  We pick a set of $n_1+n_5-1$ consecutive vertices in $S_1$, and call it $S_1'$.  
	Observe that the vertices of $S_1'$ might not be consecutive, among neighbors of $w_1$, due to vertices in $N[N[w_2]\setminus\{w_1\}]$ 
	being removed; but they are consecutive among $S_1$.
	Since $N(w_1)$ induces a linear forest, $S_1'$ can be partitioned into $2$ independent sets: one of size $n_1$ 
	(starting with the first vertex in $S_1'$), and the other of size $n_5-1$.  The latter of these will be colored $5$, and combine 
	with $w_2$ to form $I_5$; the former will be called $S_1''$ and will be colored with $1,2,3,4$.
	Similarly, we pick a set of $n_4+n_6-1$ consecutive vertices in $S_2$, and call it $S_2'$.
	Since $N(w_2)$ induces a linear forest, $S_2'$ can be partitioned into $2$ independent sets: one of size $n_4$ 
	(starting with the first vertex in $S_2'$), and the other of size $n_6-1$.  
	The latter of these will be colored $6$, and combine with $w_1$ to form $I_6$; the former will be called $S_2''$ and will be colored 
	with $1,2,3,4$.  However, we must be careful about how we pick $S_1'$ and $S_2'$, as we now describe. 

	We claim that we can choose $S_1'$ and $S_2'$ to ensure that at most one vertex, $y$, has more than a single neighbor in $S_1''\cup S_2''$.
	Furthermore, if $y$ exists, then it has exactly one neighbor in each of $S_1''$ and $S_2''$.  To see this, we first note that each vertex other
	than $w_i$ has at most one neighbor in $S_i''$, regardless of our choice of $S_i'$.  Suppose, to the contrary, that some vertex $z$ has at least 
	$2$ neighbors in $S_i''$; call them
	$x',x''$.  Now the $4$-cycle $w_ix'zx''$ separates $2$ neighbors of $w_i$, so at least one of them must not lie on the outer face, contradicting
	our assumption that we have an outerplane embedding of $G$. (This argument
	uses the fact that $S_i'$ consists of \emph{consecutive} vertices in $S_i$; since $|S_i| \ge 2n_6 > |S_i'|$, the set $S_i'$ cannot contain both 
	the first and last vertex of $S_i$, so neither can the set $S_i''$.)  

	Now suppose there exist some vertices $y_i$ that each have neighbors in
	both $S_1$ and $S_2$.  Since $G$ has no $K_{2,3}$-minor, there exist at most $2$ such $y_i$.  First suppose there exists only one; call it $y_1$.
	Again, since $G$ has no $K_{2,3}$-minor, $y_1$ has at most $2$ neighbors in each of $S_1$ and $S_2$.  If $y_1$ has $2$ neighbors in some choice 
	of $S_i'$, then they must be successive neighbors of $w_i$, so at most one of them will be in $S_i''$, and we are done.  So assume instead that 
	there exist $2$ such vertices, $y_1$ and $y_2$; see \Cref{two-dangerous-vrts-fig}.  The total number of neighbors that $y_1$ and $y_2$ have
	in $S_i$ is at most $2$, for each $i\in[2]$; otherwise $G$ has a $K_{2,3}$-minor. And at least one $y_j$ has a single neighbor in $S_i$, for the
	same reason.  Since we have at least $2$ choices for each $S_i''$, we can choose one such that $y_j$ has no neighbors in $S_i''$.  This proves the claim.

	Now we finish the equitable $6$-coloring.  Given the choices of $S_1'$ and $S_2'$ above, we use color $5$ on $\{w_2\}\cup (S_1'\setminus S_1'')$ and
	we use color $6$ on $\{w_1\}\cup (S_2'\setminus S_2'')$. We let $G':=G-(S_1'\cup S_2'\cup\{w_1,w_2\})$, and we $3$-color $G'$ with colors $1,2,3$.
	We permute colors (if needed) so that the biggest, second biggest, and smallest of these color classes of $G'$ use, respectively, colors 3, 2, and 1.
	Now we must color $n_1$ vertices of $S_1''\cup S_2''$ with colors $1,2,3$ so that color $j$ is used a total of $n_j$ times, for each $j\in[3]$.
	We do this greedily, simply picking a color $j$ that still needs to be used more times, picking an uncolored vertex $z$ in $S_1''\cup S_2''$ with 
	no neighbor colored $j$, coloring $z$ with $j$, and repeating.  

	So why does this greedy approach succeed?  The key is that every vertex in $G'$
	has at most one neighbor in $S_1''\cup S_2''$, with a possible single exception $y_1$ that could have two such neighbors.  So when we try to find
	a new vertex in $S_1''\cup S_2''$ on which to use color $j$, the number of vertices that are already colored is at most $n_1-1$, and the number of
	vertices that are ``blocked'' because they have a neighbor in $G'$ using color $j$ is at most $(n_j-1)+1=n_j$.  So the total number of vertices
	unavailable is at most $n_1-1+n_j\le n_1-1+n_3<|S_1''\cup S_2''| = n_1+n_4$; thus, some vertex in $S_1''\cup S_2''$ is available to receive color 
	$j$ as desired.  Lastly, we use color $4$ on the final $n_4$ remaining uncolored vertices of $S_1''\cup S_2''$.
\end{proof}

\subsection{One Dangerous Vertex}
\label{one-dangerous-sec}
Finally, suppose that $G$ contains a single dangerous vertex, $w$.

\begin{lem}
	If there exists $w\in V(G)$ with $d(w)\ge n/2$, 
	then $G$ has an equitable $6$-coloring.
	\label{alg1-lem1}
\end{lem}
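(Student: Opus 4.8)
The plan is to make $w$'s colour class absorb $w$ together with some of its non-neighbours, and then to reduce the rest to a structured $5$-colouring problem that I can solve by hand because $N(w)$ is a long path.

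First I would use Claim~1 (from the proof of the Partitioning Lemma) to assume $G$ is maximal outerplanar, so its outer boundary is a Hamilton cycle. Then $G[N(w)]$ is a path $Q:=u_1u_2\cdots u_k$ with $k=d(w)\ge\ceil{n/2}$ and $u_1,u_k$ the two outer-cycle neighbours of $w$ (this follows from \Cref{linear-forest} together with maximality: any chord $u_iu_j$ with $|i-j|\ge 2$ would give a $K_4$-minor through $w$). Let $B:=V(G)\setminus N[w]$ and $b:=|B|=n-1-k\le\floor{n/2}-1$. Since a largest independent set through $w$ is $\{w\}$ plus a largest independent set of $B$, the hypothesis $\alpha_w\ge\floor{n/6}$ gives $\alpha(B)=\alpha_w-1\ge\floor{n/6}-1$, so $b\ge\floor{n/6}-1$. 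Finally, because $G$ triangulates the Hamilton cycle, $B$ splits into ``bags'' $B_1,\dots,B_{k-1}$, where $B_i$ is the set of vertices of $B$ between $u_i$ and $u_{i+1}$ on the outer cycle, and each $v\in B$ has at most two neighbours in $Q$ (only $u_i$ and $u_{i+1}$ when $v\in B_i$, since any other chord would cross $u_iu_{i+1}$).

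Recall (\Cref{n_i-remark}) that $n_1\le\cdots\le n_6$ with $n_j=\floor{(n+j-1)/6}\in\{\floor{n/6},\ceil{n/6}\}$ and $\sum_j n_j=n$. I would put $w$ in the smallest class: choose an independent $C\subseteq B$ with $|C|=n_1-1$ (possible since $\alpha(B)\ge n_1-1$), set $I_1:=\{w\}\cup C$ with colour $1$, and let $G':=G-I_1$, an outerplanar graph on $n-n_1=n_2+\cdots+n_6$ vertices with $V(G')=V(Q)\cup B'$, where $B':=B\setminus C$ has $|B'|=:b'\le\floor{n/2}-\floor{n/6}$. It then suffices to colour $G'$ properly with colours $2,\dots,6$ so that colour $j$ is used exactly $n_j$ times. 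Since the $s=5$ case of the Main Theorem is open, this is exactly where the structure of $G'$ --- a long path $Q$ with the comparatively small outerplanar part $B'$ hanging off its edges --- must be exploited. I would do it in two phases. Phase~1: $3$-colour $B'$ with colours $4,5,6$; by \Cref{small-classes-obs} each class $Y_j$ has $|Y_j|\le\floor{b'/2}$, and a short calculation gives $\floor{b'/2}\le\ceil{n/6}$, so after permuting the labels $4,5,6$ and choosing which two of $n_2,\dots,n_6$ serve as the quotas of colours $2,3$, I can arrange $|Y_j|\le n_j$ for $j\in\{4,5,6\}$. Put $m_2:=n_2$, $m_3:=n_3$, and $m_j:=n_j-|Y_j|\ge 0$ for $j\in\{4,5,6\}$, so $\sum_{j=2}^6 m_j=k$. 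Phase~2: colour the path $Q$ with colours $2,\dots,6$ using colour $j$ exactly $m_j$ times, avoiding at each $u_i$ the colours already appearing on its $B'$-neighbours.

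Since $B'$ uses no colour in $\{2,3\}$, every $u_i$ may take colour $2$ or $3$, so Phase~2 is a $5$-list-colouring of a path with lists of size $\ge 2$ all containing $\{2,3\}$, subject to prescribed colour quotas. I would first take any proper list-colouring of $Q$ (immediate for a path) and then repair the colour counts by Kempe interchanges along $Q$: swapping $2\leftrightarrow 3$ on a maximal $\{2,3\}$-coloured subpath is always legal, and swaps among $\{4,5,6\}$ are legal wherever the lists permit; a counting argument --- using that the number of $Q$--$B'$ edges is at most $2b'$, so few $u_i$ forbid two or more of $\{4,5,6\}$ --- lets one combine these swaps to hit the exact quotas $m_2,\dots,m_6$. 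Uniting each $Y_j$ with the colour-$j$ vertices of $Q$ ($j\in\{4,5,6\}$), keeping colours $2,3$ as found, and adding $I_1$ yields the equitable $6$-colouring. (When $b$ is instead a large fraction of $n$, the overview's alternative is available: build a forest equipartition $F_1\uplus F_2$ with $w\in F_1$ that dumps most of $B$ --- and only a sparse independent subset of $Q$ --- into $F_1$, so $d_{F_1}(w)=|F_1\cap Q|\le\ceil{n/6}+1$ and hence $\alpha_{F_1}(w)\ge(|F_1|+1-d_{F_1}(w))/2\ge\floor{|F_1|/3}$; then equitably $3$-colour each $F_i$ by the Tree Theorem. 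Distributing the two forests of an arboricity-$2$ decomposition of $B$ between $F_1$ and $F_2$, guided by the bag structure, keeps both $F_i$ acyclic.)

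The main obstacle is Phase~2. Proper colourings of a path with fixed lists are rigid --- any stretch coloured from a $2$-element palette is forced to alternate --- so producing a colouring that meets the prescribed class sizes \emph{exactly}, rather than merely an approximately equitable one, requires a careful interchange analysis and the elimination of a few degenerate configurations; this is precisely where the strong hypothesis $d(w)\ge n/2$ (equivalently, $b'$ small relative to $k$) does its work. I would clear away the easy degenerate cases first, e.g. $B'=\emptyset$, where $G'$ is simply a path and is equitably $5$-colourable outright.
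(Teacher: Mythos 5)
Your setup is sound, and in fact it mirrors the paper's own strategy more closely than you may realize: the paper also reserves one color for $\{w\}$ together with an independent set disjoint from $N(w)$, then $3$-colors the part of $G$ outside $N[w]$ (and outside that independent set) using \Cref{small-classes-obs} so that each of those three classes fits under its quota, and finally completes the coloring on the neighbors of $w$, listed in their outerplanar order, so that every quota $n_j$ from \Cref{n_i-remark} is hit exactly. Whether the outside part carries colors $\{4,5,6\}$ (your choice) or $\{2,3,4\}$ with $\{5,6\}$ reserved for $N(w)$ (the paper's choice) is immaterial. The problem is that your ``Phase~2'' --- properly coloring the path $Q=G[N(w)]$ with prescribed lists $L(u_i)\supseteq\{2,3\}$ and \emph{exact} color multiplicities $m_2,\dots,m_6$ --- is the entire content of the lemma, and it is not proved. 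You defer it to ``Kempe interchanges plus a counting argument'' and yourself flag it as the main obstacle; in the paper this step is carried out by an explicit greedy procedure (Algorithm~1) whose correctness requires maintained invariants (each outside vertex of a given color blocks at most one $x_i$, and the remaining demand of every color never exceeds the remaining demand of the reserved color), i.e.\ genuine work rather than a repair heuristic.

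Moreover, the quantitative claim you lean on to make the repair sound routine is not accurate. The number of $Q$--$B'$ edges is indeed at most $2b'$, but $b'$ can be as large as about $\floor{n/2}-\floor{n/6}\approx n/3$ while $|Q|=d(w)$ can be as small as $\ceil{n/2}$; so on the order of $n/3$ vertices of $Q$ may forbid two colors of $\{4,5,6\}$ and on the order of $2n/9$ may forbid all three --- a constant fraction of the path, not ``few.'' So the counting argument as stated does not control the degenerate configurations, and the interchange analysis (how a $\{2,3\}$-swap or a $\{j,j'\}$-swap with $j,j'\in\{4,5,6\}$ interacts with lists and with the exact quotas simultaneously) is exactly the delicate part you have left open. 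The feasibility statement you need is true in spirit --- it is what Algorithm~1 in the paper establishes --- but as written your proposal has a genuine gap at its crux; the surrounding reductions (maximality, the bag structure of $B$, the bound $\floor{b'/2}\le n_4$, the degenerate case $B'=\emptyset$) are fine but are the easy part.
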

\begin{proof}
	We now construct an equitable $6$-coloring with colors $1,\ldots,6$.  We use $I_j$ and $n_j$ as in \Cref{n_i-remark}, coloring $I_j$ with $j$.
	Let $I_1$ be an arbitrary independent set containing $w$ of size $\floor{n/6}$.  
	Let $G':=G-(N(w)\cup I_1)$.  Let $k:=\floor{n/6}$ and let $\ell:=n-6k$.
	Note that $|G'| = n-d(w)-n_1\le 6k+\ell - (3k+\ceil{\ell/2})-k = 2k + \floor{\ell/2}$.  By \Cref{small-classes-obs}, $G'$ has a $3$-coloring 
	$\vph'$ with each color class of size at most $\floor{|G'|/2} = \floor{(2k+\floor{\ell/2})/2} = k+\floor{\ell/4}$, and at most $2$ color classes
	achieve this size.  We color the biggest, middle, and smallest color classes of $\vph'$ with, respectively, colors $4,3,2$.
	Form coloring $\vph$ from $\vph'$ by also using color $1$ on $I_1$; see the left of \Cref{alg1-figs}. 
	Color $j$ is used on at most $n_j$ vertices, for each $j\in [6]$; thus far, colors $5$ and $6$ are unused.
	Now we extend our $4$-coloring of $G\setminus N(w)$ to an equitable $6$-coloring of $G$, via Algorithm~1 below.
	We denote the neighbors of $w$ by $x_1,\ldots,x_{d(w)}$ in order around $w$, with ${x_1x_{d(w)}}\notin E(G)$.

	Now we explain why this algorithm succeeds.  Throughout the algorithm, $n'_j$ denotes the number of additional vertices that must be colored with 
	$j$ to reach the desired total $n_j$.  Also, we let $n_j''$ denote the number of vertices in $G'$ colored $j$ with one or more neighbors $x_h$ 
	where $h\ge i$; we just use $n_j''$ to prove correctness.  First note that each time we attempt to color a vertex with $6$ we will succeed, 
	until we get 
	$n_6'=0$; we will show that at this point the algorithm halts because all vertices are colored (equivalently, because now $i=d(v)+1$).  This is 
	because no vertices of $G'$ are colored with $6$, and each time we try to color $x_i$ with $6$ we have just colored $x_{i-1}$ with $5$, so no 
	neighbor of $w_i$ uses color $6$.

	We maintain the invariant that each time a vertex is colored $6$ we have that $n_j'+n_j''\le n_6'$ for all $j\in[5]$.  This holds initially 
	because $n_6'=n_6\ge n_j\ge n_j'+n_j''$ for all $j\in[5]$.  And each time that we color a vertex $6$ it is after we try to color a vertex $j$.  
	If we succeed in coloring with $j$, then $n'_j$ decreases, so the invariant is maintained.  But if we fail in coloring with $j$, then it is 
	because the vertex we try to color with $j$ has a neighbor colored $j$ in $G'$.  (Note that because $G$ is outerplanar, each vertex in $G'$ 
	colored $j$ is adjacent to at most one vertex $x_i$ that we tried to color $j$.)  Thus, the invariant is again maintained.

	We remark that line 6 never has any effect.  This is because throughout the algorithm $n_5'=n_6'$, except for right after a vertex is colored $5$.  
	However, whenever that happens, we immediately move to the next color, which is $6$.  The reason that we include line 6 is that we use the 
	same algorithm for the proof of \Cref{alg1-lem2}, where it does have an effect.
\end{proof}

\begin{minipage}{13.6cm}
\begin{algorithm}[H]
    \caption{Extending a partial coloring to an equitable $6$-coloring of $G$}
    \label{euclid}
    \begin{algorithmic}[1] 
	    \STATE $i:=1$;~~$j:=1$ if $n_6=n_5$, and else $j:=6$;~~$n_j':=n_j-|\vph^{-1}(j)|$ for all $j\in[6]$ 
	    \WHILE{$i\le d(w)$} 
		\STATE try to color $x_i$ with color $j$ \hspace{.56in}\{This could fail either because $w_i$\\ 
		\hspace{2.48in}has a neighbor colored $j$ or because\\ 
		\hspace{2.48in}already $n_j$ vertices are colored $j$.\}
		\STATE \textbf{if} line 3 succeeds, \textbf{then} $n'_j\!:=\!n'_j\!-\!1$ 
		\STATE $j:= (j\pmod{6})+1$\hspace{1.02in}\{Move to the next color.\}
		\STATE \textbf{if} $\sum_{j=3}^5n_j'<n_6'$, \textbf{then} $j:=6$\hspace{.44in}\{Skip to color 6,\\ \hspace{2.48in}if few other colors remain.\}
		\STATE \textbf{if} line 3 succeeds, \textbf{then} $i:=i+1$\hspace{.20in}\{Move to $w_{i+1}$, if coloring succeeded.\}
		\STATE go to line 2
            \ENDWHILE
    \end{algorithmic}
\end{algorithm}
\end{minipage}
\bigskip

\begin{figure}[!h]
    \centering
    \begin{tikzpicture}[scale=0.8]
    \begin{scope}
    \clip (2.0,1.1) rectangle (3.6,0.1); 
    \draw[fill=gray!40!white] (3,0.6) ellipse (1cm and 0.5cm);
    \end{scope}
    \begin{scope}
    \clip (2,-0.1) rectangle (3.2,-1.1); 
    \draw[fill=gray!40!white] (3,-0.6) ellipse (1cm and 0.5cm);
    \end{scope}
    \begin{scope}
    \clip (2.0,-1.3) rectangle (2.8,-2.3); 
    \draw[fill=gray!40!white] (3,-1.8) ellipse (1cm and 0.5cm);
    \end{scope}
    \begin{scope}
    \clip (2,2.3) rectangle (4,1.3); 
    \draw[fill=gray!40!white] (3,1.8) ellipse (1cm and 0.5cm);
    \end{scope}
    \begin{scope}
    \draw[thick] (-1,0) node[uStyle, inner sep=0.7pt] (a) {\tiny{1}} (-1.35,0) node {\scriptsize{$w$}} 
    
	    (3,0.6) ellipse (1cm and 0.5cm) (3,-0.6) ellipse (1cm and 0.5cm) (3,1.8) ellipse (1cm and 0.5cm) (3,-1.8) ellipse (1cm and 0.5cm) 
	    (3,1.8) node {\footnotesize{1}} 
	    (3,0.6) node {\footnotesize{2}}
	    (3,-0.6) node {\footnotesize{3}} 
	    (3,-1.8) node {\footnotesize{4}} 
	    (1,0) ellipse (0.5cm and 2cm) (1,0) node {\scriptsize{$N(w)$}} 
	    (0.82,-1.865) -- (a.west) -- (0.82,1.865) 
	    (4.8,-0.6) node {\footnotesize{$G'$}};
    \draw[thick, decorate, decoration={calligraphic brace, amplitude=3mm}] (4.1,1.2) -- (4.1,-2.3);
    \end{scope}

    \begin{scope}[xshift=9cm]
    \clip (2,1.7) rectangle (4.5,0.7); 
    \draw[fill=gray!40!white] (3.5,1.2) ellipse (1.5cm and 0.5cm);
    \end{scope}
    \begin{scope}[xshift=9cm]
    \clip (2,0.5) rectangle (4,-0.5); 
    \draw[fill=gray!40!white] (3.5,0) ellipse (1.5cm and 0.5cm);
    \end{scope}
    \begin{scope}[xshift=9cm]
    \clip (2.0,-0.7) rectangle (3.05,-1.7); 
    \draw[fill=gray!40!white] (3.5,-1.2) ellipse (1.5cm and 0.5cm);
    \end{scope}
    \begin{scope}[xshift=9cm]
     \draw[thick] (-1,0) node[uStyle, inner sep=0.7pt] (a) {\tiny{1}} (-1.35,0) node {\scriptsize{$w$}} 
     	    (3.5,0) ellipse (1.5cm and 0.5cm) (3.5,1.2) ellipse (1.5cm and 0.5cm) (3.5,-1.2) ellipse (1.5cm and 0.5cm) 
	    (3.5,1.7) -- (3.5,0.7) (3.5,0.5) -- (3.5,-0.5) (3.5,-0.7) -- (3.5,-1.7) 
	    (2.75,1.2) node {\footnotesize{1}} 
	    (4.25,1.2) node {\footnotesize{4}} 
	    (2.75,0) node {\footnotesize{2}} 
	    (4.25,0) node {\footnotesize{5}} 
	    (2.75,-1.2) node {\footnotesize{3}} 
	    (4.25,-1.2) node {\footnotesize{6}} 
	    (5.4,1.2) node {\footnotesize{$C_1$}} 
	    (5.4,0) node {\footnotesize{$C_2$}} 
	    (5.4,-1.2) node {\footnotesize{$C_3$}}  
	    (1,0) ellipse (0.5cm and 2cm) (1,0) node {\scriptsize{$N(w)$}} 
	    (0.82,-1.865) -- (a.west) -- (0.82,1.865) 
	    (6.3,0) node {\footnotesize{$G'$}};
    \draw[thick, decorate, decoration={calligraphic brace, amplitude=3mm}] (5.6,1.7) -- (5.6,-1.7);
    \end{scope}
    \end{tikzpicture}
    \caption{The graph $G$ in the proofs of Lemmas~\ref{alg1-lem1} (left) and \ref{alg1-lem2} (right). Gray shading indicates level of fullness. For example on the left, class 1 is full, while classes 2, 3, 4 are partially full. \label{alg1-figs}}
\end{figure}
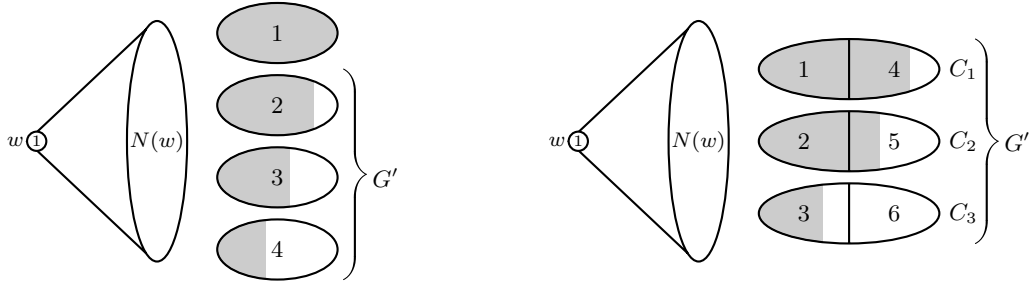

	The next $2$ lemmas use the following definition.  For a fixed vertex $w$, let $T:=N[N(w)]\setminus N[w]$. 

\begin{lem}
	If there exists $w\in V(G)$ with $2\ceil{n/6}+4\le d(w)< n/2$ and $\alpha(G[T])\le \floor{n/6}-1$, then $G$ has an equitable $6$-coloring.
	\label{alg1-lem2}
\end{lem}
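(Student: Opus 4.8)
The plan is to build an equitable $6$-coloring explicitly, following the template of \Cref{alg1-lem1}: put $w$ in color class~$1$, $3$-color $G':=G-N[w]$, redistribute its color classes among colors $1,\ldots,6$, and then color $N(w)$ greedily with Algorithm~1. Throughout we use the quantities $n_j$ from \Cref{n_i-remark}, and we write $n_j'$ for the residual demand for color $j$.

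First I would set $G':=G-N[w]$, so that $|G'|=n-1-d(w)$. The hypothesis $d(w)<n/2$ gives $|G'|>n/2-1$, and the hypothesis $d(w)\ge 2\ceil{n/6}+4$ gives $|G'|\le n-2\ceil{n/6}-5$. Apply \Cref{small-classes-obs} to obtain a $3$-coloring of $G'$ with classes $D_1\supseteq D_2\supseteq D_3$, each of size at most $\floor{|G'|/2}$. Using the two bounds on $|G'|$ together with a short case analysis on $n\bmod 6$ (of the kind carried out in the proof of \Cref{main-thm-8}), one checks that $D_1$ is large enough to supply the $n_1-1$ non-$w$ vertices of color~$1$, yet small enough that its remainder fits into color~$4$, and likewise $D_2$ splits between colors~$2$ and~$5$ while $D_3$ splits between colors~$3$ and~$6$; see the right of \Cref{alg1-figs}. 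The vertices still needing colors are exactly those of $N(w)$, so they number $d(w)$, and after the redistribution the residual demand $n_j'$ for each color $j\in\{2,\ldots,6\}$ is at most $n_j$. By \Cref{linear-forest}, $N(w)$ induces a disjoint union of paths; write its vertices as $x_1,\ldots,x_{d(w)}$ in order around $w$.

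Next I would run Algorithm~1 verbatim to color $x_1,\ldots,x_{d(w)}$: cycle through the colors, advance to $x_{i+1}$ as soon as $x_i$ has been colored, and jump to color~$6$ when few other colors remain (line~6, which is included precisely for this lemma). The reason the greedy cannot get stuck is the outerplanarity fact already used in the proof of \Cref{2-dangerous-lem}: if a vertex $t\notin N[w]$ is adjacent to both $x_a$ and $x_b$ with $a<b$, then the $4$-cycle $wx_atx_b$ separates $x_{a+1},\ldots,x_{b-1}$ from the outer face, forcing $b=a+1$; hence every vertex outside $N[w]$ has at most two neighbors in $N(w)$, and they are consecutive. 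Since the only already-colored neighbors an $x_i$ can have lie in $\{w\}\cup\{x_{i-1}\}\cup T$, where $T=N[N(w)]\setminus N[w]$, and since each color class is independent and therefore meets $T$ in an independent subset of $G[T]$ of size at most $\alpha(G[T])\le\floor{n/6}-1=n_1-1$, for every color $j$ at most $2(n_1-1)<d(w)$ of the $x_i$ are ever blocked from $j$ by a neighbor in $T$. This is the sole place the hypothesis $\alpha(G[T])\le\floor{n/6}-1$ is used. Feeding this bound into an invariant of the shape $n_j'+n_j''\le n_6'$ for all $j\in[5]$ --- where $n_j''$ counts the $G'$-vertices of color $j$ that still have an uncolored neighbor among the $x_i$ --- exactly as in \Cref{alg1-lem1}, shows that the loop runs until every $x_i$ is colored and every quota is met, yielding the desired equitable $6$-coloring.

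The step I expect to be the genuine obstacle is this last correctness argument, which is more delicate than its counterpart in \Cref{alg1-lem1}. There, no vertex of $G'$ received color~$6$, so ``every attempt to use color~$6$ succeeds until its demand is zero'' came for free; here $D_3$ contributes vertices of colors~$5$ and~$6$, so that shortcut is unavailable, and one must check --- through the two ways line~3 can fail, namely a neighbor of $x_i$ already colored $j$ or color $j$ already at its quota --- that the invariant above is preserved at every step and still forces color~$6$ to be exhausted exactly when the vertices $x_i$ run out. Carrying this bookkeeping through, and, just before it, fixing the redistribution of $D_1,D_2,D_3$ so that all six quotas become simultaneously satisfiable, is where the real work lies.
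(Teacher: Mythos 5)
Your overall template (3-color $G':=G-N[w]$, split its classes across the six colors, finish on $N(w)$ with Algorithm~1) is the paper's, but the proposal has a genuine gap, and it is exactly the step you yourself flag as ``where the real work lies.'' The paper does not use the hypothesis $\alpha(G[T])\le\floor{n/6}-1$ the way you do (bounding, per color, the number of $x_i$ blocked by a $T$-neighbor by $2(n_1-1)<d(w)$). It uses it structurally in the redistribution step: when splitting each class $C_i$ of the 3-coloring of $G'$, the part receiving the low color $i\in\{1,2,3\}$ is chosen to contain \emph{all} of $T\cap C_i$ --- possible precisely because $|T\cap C_i|\le\alpha(G[T])\le n_1-1$, so $\{w\}\cup(T\cap C_1)$ still fits inside a class of size $n_1$, and similarly for colors $2,3$. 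As a result, the vertices of $G'$ colored $4,5,6$ have no neighbors in $N(w)$ at all, so an attempt to color some $x_i$ with color $5$ or $6$ can never be adjacency-blocked by $G'$. That is what replaces the shortcut from \Cref{alg1-lem1} which you correctly note is unavailable in your arrangement, and it is what makes the invariants ($\sum_{j=3}^{5}n_j'\ge n_6'$ maintained by line~6, and $n_6'\ge n_3'+n_3''$, hence $n_4'+n_5'\ge n_3''$) provable, together with the explicit initial inequalities $n_6'\ge n_5'\ge n_4'$, $n_6'\ge n_3'\ge n_2'$, $n_1'=0$ and $n_3'+n_4'+n_5'\ge n_6+3$ that the paper derives from $|C_i|\le|G'|/2\le n/3-2$ and $|G''|\le n_3+n_4+n_5-n_6-3$.

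Your aggregate bound ``at most $2(n_1-1)$ of the $x_i$ are blocked from color $j$'' is true but does not by itself yield correctness: the cyclic greedy can stall if, at some $x_i$, every color with positive residual demand is blocked by $w$, $x_{i-1}$, or a $T$-neighbor, and it must also end with all residual demands $n_j'$ hitting zero exactly when $i$ passes $d(w)$. A global count of blocked vertices rules out neither a local stall nor a quota mismatch; the termination argument genuinely relies on color $6$ (and, after line~6 fires, colors $3,4,5$) never being adjacency-blocked when tried, which your splitting does not guarantee. Since you defer both the choice of redistribution making the six quotas simultaneously satisfiable and the invariant bookkeeping, the heart of the proof is missing, and without the ``force $T$ into colors $1,2,3$'' device the deferred bookkeeping is not merely routine --- it is unclear it can be carried out at all in your setup.
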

\begin{proof}
	Let $G':=G\setminus N[w]$.  Since $G'$ is outerplanar, it has a $3$-coloring, say with color classes $C_1$, $C_2$, $C_3$; by \Cref{small-classes-obs},
	we assume, for all $i\in[3]$, that $|C_i|\le |G'|/2 \le (n - (n/3+4))/2 = n/3-2$.  By symmetry, we assume that $|C_1|\ge |C_2|\ge |C_3|$.
	Note that $|G'|=|G|-(d(w)+1)\ge n-((n-1)/2+1) = (n-1)/2$.  Thus, some $C_i$ has order at least $\ceil{(n-1)/6}$; so $C_1$ does.  
	We take from $C_1\cup\{w\}$ an independent set of size $n_1$ that contains $w$ and all of $T\cap C_1$; call this set $I_1$.  
	We also let $I_4':=C_1\setminus I_1$.
	Similarly, we take an independent set $I_2'$, of size at most $n_2$, from $C_2$ that contains all vertices of $C_2\cap T$.
	If $|C_2|>n_2$, then we let $I_2:=I_2'$ and we let $I_5':=C_2\setminus I_2'$; otherwise $I_5':=\emptyset$.
	Finally, we take from $C_3$ an independent set $I'_3$ of size at most $n_3$ that contains all of $C_3\cap T$.  If $|C_3|> 
	n_3$, then we let $I_3:=I'_3$ and $I_6':=C_3\setminus I_3$; see the right of \Cref{alg1-figs}.  We define $n'_j$ as in Algorithm~1. 
	Note that $n_6' = n_6 - \max\{0,|C_3|-n_3\} = \min\{n_6,n_6+n_3-|C_3|\}$.
	Similarly, $n_5' = n_5 - \max\{0,|C_2|-n_2\} = \min\{n_5,n_5+n_2-|C_2|\}$.  Since $|C_3|\le |C_2|$, this gives $n_6'\ge n_5'$.
	Likewise, $n_5'\ge n_4'$; also $n_6'\ge n_3'$ and $n_3'\ge n_2'$.  Recall that $n_1'=0$.  Thus, $n_6'\ge n_j'$ for all $j\in[5]$.

	Now we must extend each set $I_j'$ to an independent set $I_j$ of size $n_j$.  
	We again use Algorithm~1, but the analysis is more involved.  If initially $n_5'\ge n_6'-1$, then line 6 never affects the algorithm, and 
	the analysis is identical to that in the previous proof.  So assume that $n_5' <n_6'-1\le n_5$; this implies that $n_2'=0$.  If we ran Algorithm~1 
	without line 6, then we could possibly use up all other colors before using up 6.  As a result, we might produce an improper coloring, by using 
	color $6$ on successive neighbors of $w$.  But line $6$ ensures that we maintain the invariant $\sum_{j=3}^5n'_j\ge n'_6$, except for possibly 
	right before we color a vertex with $6$.  We see this as follows.  

	Let $G'':=G'-(I_1\cup I_2)$.  Note that $|G''| = |G'|-(n_1-1)-n_2 \le n - d(w) - (n_1-1)-n_2 \le n - (2n_6+4)-n_1-n_2 +1 = n_3+n_4+n_5 - n_6-3$.
	Thus, $n_3'+n_4'+n_5' = n_3+n_4+n_5 - |G''| \ge n_6 + 3$.  Hence, the desired invariant holds initially.  And line 6 ensures that the invariant
	is maintained each time through the while loop, except possibly just before coloring a vertex with $6$.  Indeed, if this invariant is violated,
	then line 6 immediately jumps to color $6$, and the invariant is restored the next time through the loop.  This ensures that the number of vertices
	that still need to be colored with $6$ never exceeds the total number that still need to be colored with $3,4,5$.  

	Finally, we must check that always $n_4'+n_5'\ge n_3''$; we define $n_j''$ as in the previous proof.  
	That is, the number of vertices that still must be colored with $4$ and $5$ is always at least the number of 
	vertices in $G'$ colored $3$ with an upcoming neighbor in $N(w)$.  This inequality holds because $n_3'+n_4'+n_5' \ge n_6' \ge n_3' + n_3''$.
	Lastly, we must verify that this final inequality still holds.
	The argument that $n_6'\ge n_3'+n_3''$ is the same as in the previous proof: the inequality holds initially because $n_6' = n_6\ge n_3 \ge n_3'+n_3''$;
	and each time that $n_6'$ decreases so does $n_3'+n_3''$.  Observe that line $6$ never breaks this invariant.
	This holds because color $3$ is the first encountered after the invariant is restored by coloring a vertex with $6$.
\end{proof}

\begin{lem}
	If there exists $w\in V(G)$ with $d(w)\ge 2\ceil{n/6}+4$ and $\alpha(G[T])\ge \floor{n/6}$, then $G$ has an equitable $6$-coloring.
\end{lem}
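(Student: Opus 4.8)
The plan is to construct an equitable $6$-coloring of $G$ directly, following closely the proof of \Cref{alg1-lem2}: $3$-color $G\setminus N[w]$, split the resulting classes into a base part and an overflow part, and then extend the coloring over $N(w)$ by Algorithm~1. We may assume $d(w)<n/2$, since otherwise \Cref{alg1-lem1} already gives the conclusion; so $w$ is the unique dangerous vertex and, as in the rest of this section, we take $G$ maximal outerplanar, so that $N(w)$ induces a path $x_1\cdots x_{d(w)}$.

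The only structural change from \Cref{alg1-lem2} is the choice of color class $1$. Recall the quantities $n_j$ and sets $I_j$ of \Cref{n_i-remark}. No vertex of $T=N[N(w)]\setminus N[w]$ is adjacent to $w$, so every independent set of $G[T]$ together with $w$ is independent in $G$; hence, using the hypothesis $\alpha(G[T])\ge\floor{n/6}=n_1$, I would first fix a maximum independent set $J$ of $G[T]$ and let $I_1$ consist of $w$ together with $n_1-1$ suitably chosen vertices of $J$, colored $1$. This is precisely where the hypothesis is used. In \Cref{alg1-lem2} the complementary assumption $\alpha(G[T])\le n_1-1$ guarantees that the part of $T$ meeting each class of the $3$-coloring of $G\setminus N[w]$ fits inside the base classes $I_1,I_2,I_3$, so that the overflow classes (colors $4,5,6$) are disjoint from $T$; here we instead make color $1$ carry a maximum independent set of $T$ outright, and then choose the $3$-coloring of what remains so that little of $T$ survives in the other classes. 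With $I_1$ fixed, the remaining construction is that of \Cref{alg1-lem2}: $3$-color $(G\setminus N[w])-I_1$ as $C_1,C_2,C_3$ with $|C_1|\ge|C_2|\ge|C_3|$ (each of size at most half the order, by \Cref{small-classes-obs}), assign these classes among colors $\{2,\dots,6\}$ with the surplus beyond the target sizes spilling into the higher colors as there, and run Algorithm~1 along $x_1,\dots,x_{d(w)}$ to bring colors $2,\dots,6$ up to sizes $n_2,\dots,n_6$; the bound $d(w)\ge 2\ceil{n/6}+4$ again yields the inequality $\sum_{j=3}^{5}n_j'\ge n_6'$ needed for line~$6$ of the algorithm, essentially as in \Cref{alg1-lem2}.

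The main obstacle is re-verifying the invariants that drive Algorithm~1, now that a small number of $T$-vertices may end up in the overflow classes $I_4',I_5',I_6'$ rather than only in colors $1,2,3$. Two things must be checked. First, one must choose $J$ and the $3$-coloring of $(G\setminus N[w])-I_1$ so that, for each $j\in\{4,5,6\}$, the number of $T$-vertices precolored $j$ does not exceed the room $n_j-|I_j'|$ that Algorithm~1 has when it reaches $N(w)$; this is delicate exactly when $G[T]$ has a vertex of large degree and so admits no nearly balanced $3$-coloring, where one splits into subcases according to which class is forced to be $T$-heavy and exploits that $J$ is then an especially large independent set of $G[T]$. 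Second, the bookkeeping quantity $n_j''$ --- the number of vertices of $G\setminus N[w]$ colored $j$ with an as-yet-uncolored neighbor among $x_i,\dots,x_{d(w)}$ --- which is tracked only for $j=3$ in \Cref{alg1-lem2}, must now be tracked for every $j\in\{4,5,6\}$, and the invariants $n_j'+n_j''\le n_6'$ for $j\in[5]$ and $\sum_{j=3}^{5}n_j'\ge n_6'$ must be re-established at the top of each pass of the while-loop and shown to persist under every branch, including line~$6$. Once these invariants are in place, the greedy extension produces a proper coloring with the prescribed class sizes, which completes the proof.
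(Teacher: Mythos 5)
There is a genuine gap, and it sits exactly where your sketch becomes vague. The entire analysis of Algorithm~1, both in \Cref{alg1-lem1} and in \Cref{alg1-lem2}, rests on the fact that no vertex of $G\setminus N[w]$ that is colored $6$ (and, there, no overflow-colored vertex at all) has a neighbor in $N(w)$: this is what makes color $6$ an always-available fallback, so that every attempt to color some $x_i$ with $6$ succeeds, and it is why the hypothesis $\alpha(G[T])\le\floor{n/6}-1$ appears in \Cref{alg1-lem2} --- it is precisely what lets the classes $I_1,I_2,I_3$ swallow all of $T$, keeping $T$ out of colors $4,5,6$. Under the present hypothesis this confinement is not merely ``delicate'': it can be outright impossible. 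Since we may have $d(w)$ barely above $2\ceil{n/6}+4$, the set $T=N[N(w)]\setminus N[w]$ can contain nearly all of the roughly $2n/3$ vertices outside $N[w]$, hence $|T|>n_1+n_2+n_3$, and no choice of $J$ and of the $3$-coloring of $(G\setminus N[w])-I_1$ can keep $T$ inside colors $1,2,3$; many $T$-vertices (not ``a small number'') must land in colors $4,5,6$. Once a neighbor of some $x_i$ carries color $6$, the claim that color-$6$ attempts always succeed fails, the argument that the algorithm halts only when $n_6'=0$ collapses, and your proposed fix --- tracking $n_j''$ for $j\in\{4,5,6\}$ and ``re-establishing the invariants'' --- is exactly the part that is missing and that I do not see how to carry out along these lines; note also that the inequality $\sum_{j=3}^5 n_j'\ge n_6'$ was derived in \Cref{alg1-lem2} from the specific way $I_1\subseteq C_1\cup\{w\}$ and $I_2\subseteq C_2$ were built, so it does not transfer ``essentially as in \Cref{alg1-lem2}'' once $I_1$ is taken from $T$ instead.

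The paper handles this case by a different strategy altogether. It first disposes of the situation where some $x\ne w$ has $d(x)\ge 2\ceil{n/6}+3$ via \Cref{2-dangerous-lem}, so that every other vertex has degree at most $2\ceil{n/6}+2$. It then builds a forest equipartition rather than running Algorithm~1: using that $G$ may be taken maximal outerplanar, $G[N(w)]$ is a path and $G[T]$ a linear forest; the proof reduces (by the induction of the Partitioning Lemma, with specified edges) to a subgraph consisting of $N[N[w]]$ plus isolated attachments, pares the paths of $G[T]$ down to order $1$ or $3$, and assigns vertices of $N(w)$ and of $G[T]$ alternately to $F_1$ and $F_2$ so that $F_1$ contains $w$ together with an independent subset of $T$ of size at least $\floor{n/6}$ (this is where $\alpha(G[T])\ge\floor{n/6}$ enters), while every other vertex $x$ satisfies $d_{F_i}(x)\le\floor{(d_G(x)+1)/2}\le\ceil{n/6}+1$. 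Two applications of the Tree Theorem then finish. If you want to keep your greedy-extension framework, you would need a genuinely new mechanism for coping with overflow-colored vertices of $T$ blocking colors $4,5,6$ along the path $x_1,\dots,x_{d(w)}$; as written, the proposal does not supply one.
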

\begin{proof}
	First note that if there exists $x\in V(G)\setminus\{w\}$ with $d(x)\ge 2\ceil{n/6}+3$, then we are done by \Cref{2-dangerous-lem}.
	So henceforth we assume, for all $x\in V(G)\setminus\{w\}$, that $d(x)\le 2\ceil{n/6}+2$.

	Our main idea is to repeat the proof of the Partitioning Lemma, but to take as our base case a subgraph $G'$ containing all vertices at 
	distance at most $2$ from $w$, and possibly some vertices at distance $3$ from $w$, but no others.  We explicitly construct
	forest $F_1$, which contains $w$, to ensure that $\alpha_w(F_1)\ge \floor{|F_1|/3}$.  In fact, this large independent set containing $w$ has all
	of its other vertices in $T$; this is possible because, by hypothesis, $\alpha(G[T])\ge \floor{n/6}-1$.
	For each other vertex $x$ and $i\in[2]$ with $x\in F_i$, we just want to ensure that $d_{F_i}(x)\le \floor{(d_G(x)+1)/2}\le \ceil{n/6}+1$.
	Thus, we can again equitably $3$-color each $F_i$.
	
	We begin by constructing the subgraph $G'$ mentioned above.  All mentions of claims refer to the proof of the Partitioning Lemma.
	As in the proof of Claim~3, we repeatedly delete sets of $2$ or $3$ vertices 
	(via Claim~2).  We show that eventually we can reduce to a subgraph $G'$ containing $N[N[w]]$ such that each component of $G'-N[N[w]]$ is an
	isolated vertex.  Consider an arbitrary component $C$ of $G'-N[N[w]]$.  Because $G$ is maximal outerplanar, it is $2$-connected, so $C$ has $2$
	neighbors in $N[N[w]]$, say $y_1$ and $y_2$.  Since $G$ is maximal outerplanar, $y_1$ and $y_2$ are adjacent, and they have a common neighbor 
	$x$ with $x\in N(w)$.  But now we let $H:=G[V(C)\cup\{x,y_1,y_2\}]$, and apply Claim~2 to $H$ with specified edge $y_1y_2$.  If $|C|\ge 2$, then
	$|H|\ge 5$, so $H$ contains another reducible configuration avoiding both $y_1$ and $y_2$. 

	It is helpful to observe, since $G$ is maximal outerplanar, that $G[N(w)]$ is a single path.  Similar to \Cref{linear-forest}, the subgraph $G[T]$ 
	is a linear forest.  Let $G'':=G[N[w]\cup T]$, formed from $G'$ by deleting all vertices at distance $3$ from $w$; see \Cref{last-fig}.  We will construct the desired
	forest equipartition $F_1''\uplus F_2''$ of $G''$ such that the vertices in each path of $G[T]$ alternate between forests.  Thus, since
	each vertex of $V(G')\setminus V(G'')$ has either $1$ neighbor in $T$ or has $2$ successive neighbors in $T$, we can later assign these deleted 
	vertices arbitrarily to $F_1''$ and $F_2''$, calling the result $F_1'\uplus F_2'$, to maintain that $-1\le |F_2'|-|F_1'|\le 1$; we need not worry 
	about creating cycles.  We can also assume that each path of $G[T]$ has order $1$ or $3$, as follows.

	Given such a path of order at least $4$, we delete an endpoint of $P$ and its neighbor on $P$, avoiding the unique vertex of $P$ with $2$ neighbors
	in $N(w)$.  Given a forest equipartition of the resulting smaller graph, we extend it to the deleted vertices by adding them to opposite parts
	with the vertex of degree $2$ in $P$ also being in the part opposite its other neighbor on $P$.  Something similar works when $P$ has order $2$: again
	the deleted vertices are assigned to opposite parts, now with the vertex of $P$ with $2$ neighbors in $N(w)$ assigned to the part opposite that of
	its neighbor in $N(w)$ with only one neighbor on $P$.  Denote by $G'''$ the graph formed from $G''$ when each path of $G[T]$ is pared down to have
	order $3$, $1$, or $0$; see \Cref{last-fig}.  Let $T''':=V(G''')\cap T$, and analogously define $T''$.
	Note also that if $F_1'''$ contains a maximum independent set in $T'''$, then so does $F_1''$ in $T''$.

    \begin{figure}[H]
        \centering
        \begin{tikzpicture}[yscale=0.866]

        \foreach \i in {-3,-2,-1,1,3,4}
        \draw[thick] (\i-0.5,2) -- (\i,3) node[uStyle, fill=myothergrayer] {} -- (\i+0.5,2);
        
        \foreach \i in {-6,...,4}{
        \ifthenelse{\NOT\equal{\i}{0}}{\draw[thick] (0,0) node[uStyle] {} -- (\i,1);}
        } 

        \draw[thick] (0,1) node {\Large{\dots}};

        \draw[thick] (-1,1) -- (-1.5,2) -- (-2,1) (3,1) -- (3.5,2) -- (4,1) (2.5,2) -- (3,1) (4.5,2) -- (4,1) -- (5.5,2) (-5,1) -- (-5.5,2) -- (-6,1) (-5.5,2) node[uStyle] {};
        
        \foreach \i in {-4.5,-3.5,...,1.5}{
        \pgfmathparse{\i > -1.5}
        \ifthenelse{\equal{\pgfmathresult}{1}}{\draw[thick] (-1,1) -- (\i,2);}{\draw[thick] (-2,1) -- (\i,2);}
        } 

        \foreach \i in {-6,...,3}{
        \ifthenelse{\NOT\equal{\i}{-1} \AND \NOT\equal{\i}{0}}{\draw[thick] (\i,1) node[uStyle] {} -- (\i+1,1) node[uStyle] {};}
        } 

        \foreach \i in {-4.5,-3.5,...,4.5}{
        \ifthenelse{\NOT\equal{\i}{1.5}}{\draw[thick] (\i,2) node[uStyle] {} -- (\i+1,2) node[uStyle] {};}
        }

        \foreach \i in {-4.5,-3.5,0.5,1.5,2.5,3.5,4.5,5.5}
        \draw[thick] (\i,2) node[uStyle, fill=mygrayer] {};

        \draw[thick] (0,-0.3) node {$w$} (-7,1) node {$N(w)$} (-7,2) node {$T$};
        \end{tikzpicture}
        \caption{An example of $G'$.  The graph $G''$ is formed from $G'$ by removing dark gray vertices (which are at distance $3$ from $w$). 
	    The graph $G'''$ is formed from $G''$ by removing light gray vertices (which are at distance $2$ from $w$).} 
        \label{last-fig}
    \end{figure}
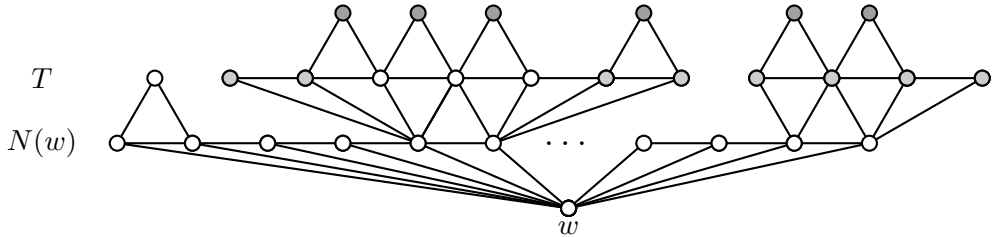

	Finally, we construct the desired forest equipartition of $G'''$.  For brevity, in the rest of the proof, we write $F_i$ to denote $F_i'''$.
	Denote by $a$ and $b$, respectively, the numbers of path components in $G[T''']$
	of orders $3$ and $1$.  We will ensure that $F_1$ contains an independent set from among $T'''$ of size $\min\{2a+b,\ceil{n/6}\}$.  If this set
	has size $\ceil{n/6}$, then we can add to it $w$ to verify that $\alpha_w(F_1)\ge \floor{n/6}$.  If the size is smaller, then we note that it is
	a maximum independent set in $G[T''']$, so also $F_1''$ contains a maximum independent set in $G[T'']$.  By assumption, the latter set has size at
	least $\floor{n/6}$.  So we will be done.

	First suppose that $3a+b\ge n/3$.  In each path of $G[T''']$ we assign the vertices alternatingly to the $2$ forests.  So from a path of order $3$, 
	one forest grows by $2$ and the other by $1$; we say that the forest growing by $2$ gets an ``extra'' vertex.  In $\floor{(a+b)/2}$ paths of 
	$G[T''']$, we assign the extra vertex to $F_1$, and in the remaining $\ceil{(a+b)/2}$ paths, we assign the extra vertex to $F_2$.
	So the size of the independent set in $F_1$ is $a(1)+b(0)+\floor{(a+b)/2} = \floor{(3a+b)/2} \ge \floor{n/6}$.  Lastly, we assign all
	vertices of $N(w)$ alternatingly to $F_2$ and $F_1$. We can easily check that this gives a forest equipartition.  We can also verify that 
	$d_{F_i}(x)\le \floor{(d_G(x)+1)/2}$ for all $x$ and $i\in[2]$ with $x\in F_i$.  So we are done.

	Now assume instead that $3a+b < n/3$.  In this case, for every path of $G[T]$, we assign the extra vertex to $F_1$, rather than to $F_2$.  
	For each path of order $3$, we mark as ``reserved'' the $2$ neighbors in $N(w)$ of its center vertex; we also assign to $F_2$ an additional 
	$a+b+1$ unreserved 
	vertices of $N(w)$.  This is possible because $a+b+1 < n/3+1 - 2a \le d(w)-2a$.  Finally, we assign alternatingly to $F_2$ and $F_1$ all 
	unassigned vertices of $N(w)$, including those that were reserved.  This ensures that $0\le |F_2|-|F_1|\le 1$ and that each $F_i$ is acyclic.
	As before, it is straightforward to check that $d_{F_i}(x)\le \floor{(d_G(x)+1)/2}$ for all $x$ and $i\in [2]$ with $x\in F_i$.
\end{proof}
\bibliographystyle{habbrv}
{\footnotesize{\bibliography{equitable}}}

\section*{Appendix}
\subsection*{Finishing the Proof of Claim~1}

Now we give the full details of the proof of Claim~1 in the proof of the Partitioning Lemma.  For convenience, we restate it below.

\begin{lem}[Claim~1 in the proof of the Partitioning Lemma]
	If $G$ is outerplanar, then there exists a supergraph $G'$ of $G$ such that 
	$G'$ is maximal outerplanar and $d_{G'}(v)\le 2\lceil n/6\rceil+3$ for all vertices $v$ but at most two, say $w_1,w_2$.  Furthermore, $d_{G'}(w_i)\le \max\{2\ceil{n/6}+4,d_G(w_i)+1\}$ for each $i\in[2]$; and if equality holds for both $d_{G'}(w_1)$ and $d_{G'}(w_2)$, then $w_1w_2\in E(G')$.
\end{lem}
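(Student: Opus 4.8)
The plan is to construct $G'$ by adding edges to $G$ one at a time, always giving priority to an edge that is not incident to a current high-degree vertex, and then to control how often we are forced to violate that priority. Call a vertex \emph{big}, with respect to whatever graph we currently have, if its degree is at least $2\ceil{n/6}+3$; by \Cref{few-big-lem}, applied to the current outerplanar graph, there are never more than two big vertices. Before triangulating I would first make the graph $2$-connected, which is necessary since a maximal outerplanar graph on at least three vertices is $2$-connected: while the graph is disconnected or has a cut vertex, add an edge between two vertices that flank a ``gap'' in the rotation at a cut vertex (or between two components), choosing one not incident to a big vertex whenever this is possible. I record the one special obstruction this step can run into — a cut vertex $c$ adjacent to both big vertices, with those two vertices flanking a gap at $c$ — and call it ``situation~2'' below.

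Suppose first that there is at most one big vertex $w$. As long as the current graph $H$ is $2$-connected but not maximal outerplanar, in the fixed outerplane embedding some internal face is bounded by a cycle of length $k\ge 4$; since at most one vertex of this cycle is $w$, and a cycle of length at least $4$ always has an addable chord (drawn inside that face) avoiding any single prescribed vertex, I add such a chord, not incident to $w$. Adding a chord may push some vertex from degree $2\ceil{n/6}+2$ up to $2\ceil{n/6}+3$, creating a \emph{second} big vertex; at that instant I pass to the two-big-vertex case. This costs nothing, because a vertex whose degree never exceeds $2\ceil{n/6}+3$ is harmless: the conclusion permits all but two vertices to have degree at most $2\ceil{n/6}+3$.

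The substantive case is that $H$ has exactly two big vertices, $w_1$ and $w_2$. Whenever $H$ is $2$-connected but not maximal outerplanar, pick a non-triangular internal face $f$. If $f$ is bounded by a cycle of length at least $5$, or of length $4$ on which $w_1$ and $w_2$ are not consecutive, then $f$ has an addable chord avoiding both $w_1$ and $w_2$, which I add. So the only way to be blocked is that every non-triangular face is a $4$-face on which $w_1$ and $w_2$ are consecutive — ``situation~1'', which forces $w_1w_2\in E(H)$ — possibly together with ``situation~2''. I would then establish: (i) situation~2 occurs at most once, since resolving it places $w_1$ and $w_2$ on a common triangular face, after which $c$ has no gap between them (and more generally, while $w_1w_2\in E$ the triangle-is-a-face property of outerplanar graphs rules out situation~2); (ii) situation~1 occurs at most twice, since the edge $w_1w_2$ lies on exactly two faces; (iii) these bounds cannot all be attained simultaneously, so there are at most two forced moves in total; and (iv) at each forced move I may choose which of $w_1,w_2$ to make an endpoint of the new edge, so I can always arrange that each of $w_1$ and $w_2$ is an endpoint of at most one forced edge. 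Since a non-forced move never touches a vertex once it is big, and a forced move raises the degree of the chosen $w_i$ by exactly $1$, this gives $d_{G'}(w_i)\le\max\{2\ceil{n/6}+4,\ d_G(w_i)+1\}$ for $i\in[2]$ and $d_{G'}(v)\le 2\ceil{n/6}+3$ for every other vertex $v$. Finally, if equality holds for both $w_1$ and $w_2$, then each was an endpoint of a forced edge; every situation-1 move presupposes $w_1w_2\in E$, and every situation-2 move adds $w_1w_2$, so in all cases $w_1w_2\in E(G')$.

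I expect the main obstacle to be the global bookkeeping in the two-big-vertex case. A forced addition alters the face structure, so a previously harmless face can turn into a ``situation~1'' $4$-face, and one must verify that new forced moves are never generated faster than old ones are resolved, so that the running total stays at most two and the endpoint choices in (iv) remain mutually consistent; one must also rule out an adverse interaction in which resolving situation~2 is followed by a situation~1 on the newly created edge $w_1w_2$ that would touch the same $w_i$ twice. The remaining ingredients — that a cycle of length at least $4$ has an addable chord missing a prescribed vertex in an outerplane embedding, the arithmetic behind the degree bounds, and the small cases where $n$ is too small for any big vertex to exist — are routine.
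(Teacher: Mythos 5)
Your overall plan (add edges greedily while avoiding the at-most-two high-degree vertices, isolate the forced configurations, and bound the forced additions) is the same as the paper's, but the handling of the cut-vertex obstruction contains a genuine gap, one you partly flag yourself. First, the configuration you call situation~2 is not the one that actually forces a bad move: if $w_1$ and $w_2$ flanked the gap from \emph{different} components of $G'-c$, you could reflect one component's embedding around $c$ and join two non-big flanking vertices, so nothing is forced there. The configuration that genuinely blocks you is the one where \emph{both} big vertices are the outer-face neighbors of the cut vertex inside the \emph{same} component (the paper shows this, using that after the free phase no big vertex is a cut vertex, and that such a cut vertex is unique); the forced repair is then an edge from one big vertex across to a non-big flanking vertex of the other component -- it is not the edge $w_1w_2$ -- so it charges only one of $w_1,w_2$. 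Your repair ``add $w_1w_2$'' charges both big vertices simultaneously, which makes your step (iv) false as stated for that move, and it leaves open exactly the scenario you concede you cannot rule out: a later forced $4$-face chord on the edge $w_1w_2$ touching the same $w_i$ a second time. That scenario is two forced moves in total (so consistent with your (iii)) and yet gives $d_{G'}(w_i)\ge d_G(w_i)+2$, breaking the bound $d_{G'}(w_i)\le\max\{2\lceil n/6\rceil+4,\,d_G(w_i)+1\}$. Steps (i), (iii), (iv) are asserted rather than proved, and as organized they cannot all hold.

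The missing idea that closes this in the paper is structural, not just bookkeeping: when the cut-vertex obstruction occurs, $w_1$ and $w_2$ have a common neighbor (the cut vertex) with both connecting edges on the outer walk, and outerplanarity then forces $w_1,w_2$ to lie together on at most \emph{one} internal $4$-face; that single face is triangulated by a chord chosen incident to the \emph{other} big vertex. Hence each of $w_1,w_2$ receives at most one forced edge in every case, and the ``furthermore'' clause follows because equality for both requires two forced additions, which can only happen when a $4$-face move occurs, and such a move presupposes $w_1w_2\in E(G')$. Without this argument (or an equivalent replacement for your situation-2 repair), the degree bound for the two exceptional vertices is not established.
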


\begin{proof}
	A vertex $w$ is \emph{bad} if $d(w)\ge 2\lceil n/6\rceil+3$; note that each bad vertex has degree at least 5.  Throughout, we refer to our current
	graph as $G'$.  We have $3$ phases, if needed, of adding edges.  In Phase 1, we add edges as long as possible (maintaining outerplanarity) 
	without adding any edge incident to a bad vertex.  In Phase 2 (if needed) we add a first edge incident to a bad vertex; if after Phase 1 
	$G'$ has a cut-vertex $v$, then in Phase 2 we add an edge with endpoints in distinct components of $G'-v$.  (If Phase 2 is not needed, then we
	simply say that Phase 2 is \emph{empty}; similarly, for Phase 3.) Finally, in Phase 3 (if needed) we add an edge incident to the other bad vertex.  
	By \Cref{few-big-lem}, we know that throughout $G'$ has at most $2$ bad vertices.  We will argue about the structure of $G'$ after 
	Phases 1, 2, and 3;  ultimately, we will conclude that after Phase 3, the graph $G'$ is maximal outerplanar.

    \begin{clmA}
    \label{subclaim2}
    	After Phase 1, no bad vertex in $G'$ is a cut-vertex.
    \end{clmA}
    \begin{clmproof}
   	Suppose $v$ is a cut-vertex that is bad. Let $G_1$ be a component of $G-v$ that contains no bad vertices and let $G_2:=G-(G_1\cup\{v\})$. 
	If $G_2=\{u\}$, then $u$ is not bad since bad vertices have degree at least 5. Furthermore, if $|G_2|\ge2$, then at most $1$ vertex in $G_2$ is bad. 
	In both cases, we add an edge between non-bad vertices of $G_1$ and $G_2$ while preserving outerplanarity.  
    \end{clmproof}

    \begin{figure}[h]
    \centering
    \begin{tikzpicture}[scale=1.2]
    \begin{scope}
    \draw[thick] (-1,0.5) -- (0,0) -- (1,0.5) (-1,-0.5) -- (0,0) -- (1,-0.5);
    \foreach \i/\j in {0/0, -1/0.5, 1/0.5, -1/-0.5, 1/-0.5}
    \draw[thick] (\i,\j) node[uStyle] {};
	    \draw[thick] (0,0.225) node {\footnotesize{$v$}} (-.95,0.725) node {\footnotesize{$w_1$}} (.95,0.725) node {\footnotesize{$w_2$}} (-.95,-0.725) node {\footnotesize{$x_1$}} (.95,-0.725) node {\footnotesize{$x_2$}};
	    \draw (-1,0) node {\footnotesize{$~G_1$}};
	    \draw (1,0) node {\footnotesize{$~G_2$}};
    \end{scope}

    \begin{scope}[xshift=8cm]
    \draw[thick] (-1,0.5) -- (0,0) -- (1,0.5) (-1,-0.5) -- (0,0) -- (1,-0.5) (-3,0.5) -- (-2,0) -- (-3,-0.5) (-1,0.5) -- (-2,0) -- (-1,-0.5); 
	    \draw (-1,0) node {\footnotesize{$~G_2$}};
	    \draw (-3,0) node {\footnotesize{$~G_1$}};
	    \draw (1,0) node {\footnotesize{$~G_3$}};
    \foreach \i/\j in {0/0, -1/0.5, 1/0.5, -1/-0.5, 1/-0.5, -3/0.5, -3/-0.5, -2/0}
    \draw[thick] (\i,\j) node[uStyle] {};
    \draw[thick] (0,0.25) node {\footnotesize{$v'$}} (-.95,0.725) node {\footnotesize{$w_1$}} (1,0.775) node (w2) {} (-.95,-0.725) node {\footnotesize{$x_1$}} 
	    (1,-0.8) node (x2) {} (-2,0.225) node {\footnotesize{$v$}};
    \end{scope}
    \end{tikzpicture}
	    \caption{Hypothetical cut-vertices $v$ and $v'$ in the proof of \Cref{subclaim3}.}
    \label{cut-vrt-fig}
    \end{figure}
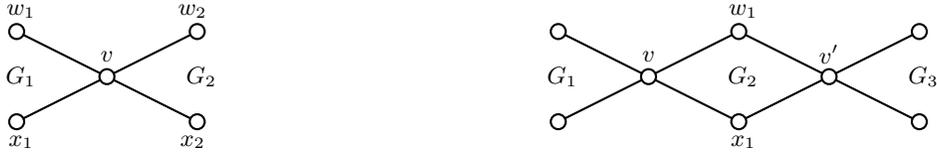

    \begin{clmA}
    \label{subclaim3}
    	If $G'$ has a cut-vertex $v$ after Phase 1, then $G'-v$ has exactly $2$ components and one of those components has bad vertices $w$ and $x$
	such that edges $vw$ and $vx$ appear on the outer facial walk.  Furthermore, such a vertex $v$ is unique; thus $G'$ is 2-connected after Phase 2.
    \end{clmA}  
	\begin{clmproof}
		We first observe that each component $G_i$ of $G'-v$ contains a neighbor $w_i$ of $v$ on the outer face that is not bad, unless $v$ has
		$2$ bad neighbors $w_i$ and $x_i$ in $G_i$ and both edges $vw_i$ and $vx_i$ appear on the boundary of the outer face.  To see this,
		first suppose that $v$ has a single neighbor $w_i$ in $G_i$.  If $|G_i|=1$, then clearly $w_i$ is not bad.  But if $|G_i|\ge 2$, then
		$w_i$ is a cut-vertex, so $w_i$ is not bad, by \Cref{subclaim2}.  Now assume instead that $v$ has at least $2$ neighbors in $G_i$.  Pick $w_i$
		and $x_i$ in $G_i$ such that edges $vw_i$ and $vx_i$ appear on the outer facial walk.  So the claimed observation holds regardless
		of whether or not both $w_i$ and $x_i$ are bad.

		Note that we can add edge $w_iw_{i+1}$ if both $w_i$ and $w_{i+1}$ are non-bad, by possibly reflecting the embedding of $G_i$ or $G_{i+1}$
		around $v$.  This proves the first statement.  If there exists a second vertex $v'$ also with the properties of $v$, then $v$ and $v'$ 
		are both adjacent along the outer face to both bad vertices.  By \Cref{subclaim2}, each bad vertex is not a cut-vertex, so each bad vertex
		has degree at most $3$, a contradiction.
	\end{clmproof}
    \begin{clmA}
    \label{subclaim4}
    	After Phase 3, the graph $G'$ is maximal outerplanar.
    \end{clmA}  
    \begin{clmproof}
		By the previous claim, after Phase 2, graph $G'$ is $2$-connected, so the boundary walk of every face in $G'$ is a cycle.  
		To prove the claim, it suffices to 
		consider an arbitrary non-outer face $f$, and show that $f$ is indeed a $3$-face.  The only reason that we might not add a chord of a 
		$4^+$-face $f$ is that one of its endpoints is bad.  But since $G'$ has at most $2$ bad vertices, if $f$ is a $5^+$-face, then we can 
		choose a chord avoiding all bad vertices, which contradicts the maximality of $G'$.  (We always embed the added chord in the interior of 
		the face.  We need not worry about the chord already existing, but being embedded outside of $f$, since this would imply that the 
		embedding is not 
		outerplanar.)  A similar argument works if $f$ is a $4$-face unless $2$ bad vertices, say $w_1$ and $w_2$, appear successively along the 
		boundary of $f$.  So assume this is the case.

		In order to achieve $2$-connectedness, the total number of edges added incident to bad vertices is at most $1$.
		And in order to triangulate all $4$-faces, the total number of edges added incident to bad vertices is at most $2$.
		But if we have added an edge of the first type, then the bad vertices $w_1,w_2$ have a common neighbor $v$, so $w_1,w_2$ lie together
		in $G''$ on at most one $4$-face.  And we ensure that it becomes triangulated when we add a single edge to finish Phase 3.
    \end{clmproof}

    This final claim completes the proof.
\end{proof}

\subsection*{Reducing the Case $\bm{s=5}$ to the Case $\bm{s=4}$ for the Main Theorem}
To conclude this appendix we show that if the Main Theorem can be extended to the case $s=4$, then it can also be extended to the case $s=5$.  This reduction is \emph{not needed} for the
proof of the Main Theorem.  But it provides modest support for \Cref{45-conj}.

\begin{lem}
\label{reduce-to-smallest-s-lem}
	If every outerplanar graph $H$ with $\min_{w\in V(H)}\alpha_w\ge \floor{|H|/4}$ has an equitable $4$-coloring, 
	then also every outerplanar graph $G$ with $\min_{x\in V(G)}\alpha_x\ge \floor{|G|/5}$ has an equitable $5$-coloring. 
\end{lem}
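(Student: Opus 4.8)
The plan is to imitate the degree-deletion reduction of \Cref{reduce-to-small-s-lem}. Given an outerplanar $G$ with $\min_{x}\alpha_x(G)\ge\floor{n/5}$, I would choose an independent set $I\subseteq V(G)$ of size $\floor{n/5}$ or $\ceil{n/5}$ (picked according to the residue $n\bmod 5$ so that a fifth color class of that size is consistent with an equitable $5$-coloring), show that $G':=G-I$ is outerplanar with $\min_{x}\alpha_x(G')\ge\floor{|G'|/4}$, apply the hypothesized $s=4$ case of the Main Theorem to equitably $4$-color $G'$, and color $I$ with the fifth color. So the whole task is to choose $I$ well and verify the independence-number condition on $G'$; I would first dispatch the routine size bookkeeping (matching the $\bmod 4$ and $\bmod 5$ residues and checking that $\floor{|G'|/4}$ is the correct target).

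For the verification, fix $x\in V(G')$ and write $\alpha_x(G')=1+\alpha\big((G-N_G[x])-I\big)$, using $N_{G'}[x]=N_G[x]\setminus I$. Two lower bounds are available. First, a crude one from $3$-colorability: $G-N_G[x]-I$ is outerplanar, so $\alpha_x(G')\ge 1+\ceil{(|G'|-1-d_{G'}(x))/3}$, and a short computation shows this exceeds $\floor{|G'|/4}$ whenever $d_G(x)$ is at most roughly $n/5$; hence the only obstructions are vertices of $G$ of larger degree. Second, a sharper one from the hypothesis: $\alpha(G-N_G[x])\ge\floor{n/5}-1$, and deleting $I$ destroys at most $|I\setminus N_G[x]|$ vertices of a witnessing independent set, so it pays for $I$ to contain many neighbors of $x$; more usefully, if $I$ always contains a maximum-degree vertex $v$, then $N_G(v)\subseteq V(G')$ induces a union of paths (\Cref{linear-forest}) and, since $|N_G(v)\cap N_G[x]|\le 3$ by $K_{2,3}$-freeness, $\alpha_x(G')\ge 1+\ceil{(d_G(v)-3)/2}$ for every $x$. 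The remaining work is to handle the vertices $w$ with $d_G(w)$ above the $\sim n/5$ threshold: there are only boundedly many of these, and at most two of them have degree anywhere near $n/3$ or larger (by a $K_{2,3}$-counting argument like that of \Cref{few-big-lem}). For each such $w$ of moderate degree I would put into $I$ a large independent subset of $N_G(w)\setminus N_G(v)$ (itself a union of paths, with $|N_G(v)\cap N_G(w)|\le 2$), padding $I$ up to the required size, so that $d_{G'}(w)$ drops below the threshold; for the at-most-two $w$ of large degree, the linear-forest bound through $N_G(v)$ does the job.

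I expect the main obstacle to be that the single set $I$ must do several things at once — delete $v$, push every vertex of degree above the threshold back below it (or else supply a substitute lower bound for it), and have exactly the prescribed cardinality while staying independent — so making these demands compatible seems to force a careful sub-case analysis: on how many vertices lie above the $n/5$ and $n/3$ thresholds, on whether $vw\in E(G)$, and on exactly where the (at most two) common neighbors of $v$ with another high-degree $w$ sit along the path $G[N_G(v)]$. The genuinely extreme sub-case, in which $d_G(v)$ (and perhaps a second degree) is close to $n/2$, so that $N_G(v)\cup N_G(w)$ covers all but boundedly many vertices and $G$ resembles one or two large fans with a few extra low-degree vertices, I would instead treat directly in the spirit of \Cref{2-dangerous-lem} and \Cref{alg1-lem1}: seed two color classes with large independent subsets of $N_G(v)$ and $N_G(w)$, $3$-color the small outerplanar remainder, and distribute $v$, $w$, and the leftover vertices greedily. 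Pinning down all of these with the exact floor/ceiling values is where I anticipate the bulk of the elementary-but-tedious effort to lie.
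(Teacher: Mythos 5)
Your skeleton is the same as the paper's: delete one independent set $I$ of the prescribed size, verify $\min_{x}\alpha_x(G-I)\ge\floor{|G-I|/4}$, and invoke the $s=4$ hypothesis; and you correctly identify (via the algebra of \Cref{reduc-to-small-lem}) that the only problematic vertices are those with $d_G(x)\ge\ceil{n/5}+4$ (the paper's set $S$). Your handling of at most one such vertex, and essentially of two, matches the paper's Cases 1 and 2. The gap is in the regime of three or more vertices above the threshold. Your two mechanisms there are (i) push each such $w$ below the threshold by loading $I$ with an independent subset of $N_G(w)$, and (ii) the fallback bound $\alpha_x(G')\ge 1+\ceil{(d_G(v)-3)/2}$ through $N_G(v)$. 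Mechanism (ii) reaches $\floor{|G'|/4}\approx\floor{n/5}$ only when $d_G(v)\ge 2n/5$ or so, and (i) has a budget problem that you do not resolve. Concretely, take a maximal outerplanar graph made of three fans of order about $n/3$ glued end-to-end along their rims, so the three centers $w_1,w_2,w_3$ each have degree about $n/3$ (this does not contradict \Cref{few-big-lem}, and the hypothesis holds: any vertex together with alternate rim vertices of the other fans lies in an independent set of size about $n/3$). Here $\Delta(G)\approx n/3<2n/5$, so (ii) yields only about $n/6<\floor{n/5}$; and after placing, say, $w_1$ in $I$, forcing $d_{G'}(w_2),d_{G'}(w_3)\le\ceil{n/5}+3$ would require deleting roughly $2(n/3-n/5)=4n/15$ of their neighbors, exceeding $|I|\le\ceil{n/5}$ even before worrying about independence of the union. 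So exactly where the work lies, neither of your tools applies; moreover the ``extreme sub-case'' you single out for special treatment ($d(v)$ near $n/2$) is in fact the easy case for your own bound (ii), so the special treatment is aimed at the wrong place.

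The missing idea is the paper's Case 3: do not try to lower the degrees of the surviving high-degree vertices at all. Instead choose $w_1,w_2,w_3\in S$ minimizing the order of a smallest connected subgraph $J$ containing them, prune their neighborhoods to sets $S_i\subseteq N(w_i)$ satisfying $S_i\cap N(w_j)=\emptyset$ and $E(S_i,S_j)=\emptyset$ (discarding the at most two neighbors lying on short $w_i,w_j$-paths, by $K_{2,3}$-minor-freeness), take $I$ to be $w_1$ together with one color class of each linear forest $G[S_2]$ and $G[S_3]$, and then certify $\alpha_{w_2}(G')$, $\alpha_{w_3}(G')$, and $\alpha_{w_h}(G')$ for every further $w_h\in S$ directly, as unions of the remaining color classes of the $S_i$ (each of size about $\ceil{n/10}$) plus the vertex itself; the minimality of $J$ is what bounds the number of neighbors such a $w_h$ can have in $S_1\cup S_2\cup S_3$, and the four sub-cases according to $G[\{w_1,w_2,w_3\}]$ are where the floor/ceiling slack is actually earned. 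Nothing in your proposal plays the role of this construction, so as written the plan breaks down precisely in the several-moderate-degree-vertices case.
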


\begin{proof} 
Let $G$ be an outerplanar graph. 
	By the calculation at the end of \Cref{lem1}, we know that if $(n'+1)/(\alpha'_x+1)>4$, then $d_{G'}(x) > n/5+3.5$. 
	We strengthen this inequality slightly, as follows.  If $d_{G'}(w)\le 3$, then $\alpha'_w\ge 1 + (n'-d_{G'}(w)-1)/4 \ge n'/4$, as desired.
	If $d(w') \ge 4$, then we can strengthen to
	$$
	\left\lceil\frac{n'+1}{\alpha'_w+1}\right\rceil 
	\le \left\lceil\frac{n-\floor{n/5}+1}{\ceil{(n-\floor{n/5}-d(w)-1)/3}+2}\right\rceil
	\le \left\lceil\frac{n-{n/5}+1}{\ceil{(n-{n/5}-d(w)-1)/3}+2}\right\rceil.
	$$
	Note the ``$+1$'' in the final numerator rather than the ``$+2$'' (which holds for all values of $d(w)$) in the proof of \Cref{reduc-to-small-lem}.
	This final inequality holds because $(a+b+\epsilon)/((a+\epsilon)/3) \le (a+b)/(a/3)$ whenever $a,b,\epsilon > 0$.
	With this motivation, let $S:=\{x\in V(G)~|~d_G(x) \ge \ceil{n/5} + 4\}$.  Now we consider $3$ cases based on $|S|$.

\textbf{Case 1: $\bm{|S|\le 1}$.}
Let $w$ be a vertex of maximum degree, and let $I_w$ be an independent set containing $w$ of size $\floor{n/5}$.
Let $G':= G-I_w$.  Clearly $d_{G'}(x)\le n/5+4$ for all $x\in V(G)$.  So $\max_{x\in V(G')}(n'+1)/(\alpha'_x+1)\le 4$, as desired.  

\textbf{Case 2: $\bm{|S|=2}$.}
We denote $S$ by $\{w_1,w_2\}$.  We assume that no independent set $I$ of size $\floor{n/5}$ contains both $w_1$ and $w_2$;
if it does, then we let $G':=G-I$, and we are done.
For each $i\in [2]$, let $I_i$ be an arbitrary independent set containing $w_i$ with size $\floor{n/5}$, and
let $b_i:=|N(w_i)\cap I_{3-i}|$.
If $d(w_i)-b_i\le \ceil{n/5}+3$, for either $i\in [2]$, then we let $G':=G-I_{3-i}$.
Now $w_{3-i}\notin V(G')$ and $d_{G'}(w_i) = d_G(w_i)-|N(w_i)\cap I_{3-i}| = d(w_i) - b_i \le \ceil{n/5}+3$.
Thus, $\max_{x\in V(G')}(n'+1)/(\alpha'_x+1)\le 4$, as desired.  

So assume instead that $d(w_i)-b_i \ge \ceil{n/5}+4$ for each $i\in [2]$.
Let $S_i:=N(w_i)\setminus I_{3-i}$ for each $i\in [2]$.  Since $G$ has no $K_{2,3}$-minor, at most $2$ neighbors of $w_1$ appear on $w_1,w_2$-paths.
Form $S_1'$ from $S_1$ by deleting all such neighbors.  Define $S_2'$ analogously and note, for each $i\in[2]$, that $|S_i'|\ge \ceil{n/5}+2$.
Since $S_1'\subseteq N(w_1)$, we know $G[S_1']$ is $2$-colorable so contains an independent set $I_1'$ of size $|S_1'|/2\ge \ceil{n/10}+1$.
Similarly, $S_2'$ contains an independent set $I_2'$ of size $\ceil{n/10}+1$.  

	By construction $I_1'\cup I_2'$ is also independent; 
	pick $I\subseteq I_1\cup I_2$ with $|I| = \floor{(n+1)/5}$ and let $G':= G-I$.  (Note here $|I|$.  If $\floor{(n+1)/5}=\floor{n/5}$, then we
	know from the calculation above, slightly strengthening that at the end of \Cref{reduc-to-small-lem}, 
	that each other vertex of $G'$ is in an independent set that is sufficiently large.  But if $\floor{(n+1)/5} = \floor{n/5}+1$, then we repeat
	that calculation with $(n+1)/5$ in place of $n/5$.  This time we get $d(w) > n/5+21/5$, which is still good enough.)
	For each $i\in 2$, we know that $\alpha'_{w_i}\ge |I_i| = \floor{n/5} = \floor{n'/4}$.  
Thus, $\max_{x\in V(G')}(n'+1)/(\alpha'_x+1)\le 4$, so we are done.

\textbf{Case 3: $\bm{|S|\ge 3}$.}
We denote the elements of $S$ by $w_i$.  From among all $w_i$ we choose $w_1,w_2,w_3$ (possibly with renaming) to minimize the order of the 
smallest connected subgraph $J$ containing $w_1,w_2,w_3$; we will explain the motivation behind this choice near the end of Cases~3.3 and 3.4.
Let $S':=\{w_1,w_2,w_3\}$.  At each mention below of indices $i,j$ or of indices $i,j,k$, we assume that all indices are distinct.
We typically want our statements to hold for all such choices of $i,j$ or of $i,j,k$.
Our goal is to find sets $S_i\subseteq N(w_i)$, for all $i\in [3]$, such that
(1) $|S_i|\ge \lceil n/5\rceil$, (2) $S_i \cap N(w_j) = \emptyset$, and 
(3) $E(S_i,S_j)=\emptyset$.  Note that (2) implies $S_i\cap S_j = \emptyset$.
The value of such sets is that when constructing an independent set of size $\lceil n/5\rceil$ containing $w_i$, we can take $w_i$ along with 
arbitrary independent sets from $G[S_j]$ and $G[S_k]$.  Since $\alpha(G[S_j])\ge |S_j|/2 \ge \lceil n/10\rceil$ and also $\alpha(G[S_k])\ge \lceil 
n/10\rceil$, we will nearly be done.  Finally, we will need to construct big independent sets containing $w_h$ for all $h\ge 4$.

To provide more details, we consider cases based on $G[S]$.
In each case, we start by letting $S_i:=N(w_i)\setminus (N[w_j]\cup N[w_k])$ for each $i\in [3]$.
To ensure condition (3), we may also need to delete a few more vertices from certain sets $S_i$.
Throughout whenever we speak of a $w_i,w_j$-path $P$, we mean that $P$ has length at most $3$ and that $P$ is minimal under inclusion; in particular,
	$|N(w_i)\cap V(P)|=1$ and $|N(w_j)\cap V(P)|=1$.

\textbf{Case 3.1: $\bm{G[S']=K_3}$.}
We focus on ensuring that $E(S_i,S_j)=\emptyset$ in the case $i=1$ and $j=2$, but the same argument works for each choice of distinct $i,j\in[3]$.
Recall that $G$ has no $K_{2,3}$-minor.  Since $w_3\in N(w_1)\cap N(w_2)$, the maximum number of $w_1,w_2$-paths in $G-w_3$ is at most $1$.
(Here the $w_1,w_2$-paths need not be disjoint, just distinct.  If they intersect, we contract this intersection onto its neighbor in 
$\{w_1,w_2\}$.  And we still find a $K_{2,3}$-minor.)  So if such a $w_1,w_2$-path exists, then we simply remove one of its interior
vertices from $S_1$.  We also do the same for $w_1,w_3$-paths.  Thus, in total we remove at most $2$ vertices from our original choice of $S_1$.
So, in the end we have $|S_1|\ge d(w_1)-4 \ge \lceil n/5\rceil$, as desired.  Similarly, we get $|S_2|\ge \lceil n/5\rceil$ and 
$|S_3|\ge \lceil n/5\rceil$.

Since $G$ is outerplanar, $G[S_i]$ is a disjoint union of paths.  We can $2$-color these so that each color class has size at least 
$\lfloor \lceil n/5\rceil/2\rfloor$.  Thus, to construct an independent set $I$ of size $\lfloor n/5\rfloor$ containing $w_1$, we take $w_1$
along with the smaller color class in the $2$-coloring of $G[S_2]$ and the smaller class in the $2$-coloring of $G[S_3]$, 
possibly discarding a vertex to get to size precisely $\lfloor n/5\rfloor$.  Let $G':=G-I$ and $n':=|V(G')|$.  To get an independent set $I'$
in $G'$ containing $w_2$ and having size $\lfloor n'/4\rfloor\le \lceil n/5\rceil$, we take $w_2$ along with the bigger color class in the 
$2$-coloring of $G[S_1]$ and the remaining color class in the $2$-coloring of $G[S_3]$.  
In fact, this argument shows that $\alpha'_{w_2} \ge 2\lceil(\lceil n/5\rceil/2)\rceil + 1 \ge \lceil n/5\rceil+1$.
The same argument works to get an independent set in $G'$ containing $w_3$.

Finally, we consider $w_h$ with $h\ge 4$.  We start with the union of the bigger color class in each $S_i$.  Vertex $w_h$ could have neighbors in
this union, but at most $2$; otherwise we contract $S'$ to a single vertex and get $K_{2,3}$.  So we add $w_h$ to this union and remove its at 
most $2$ neighbors.  The resulting independent set has size at least $3\ceil{n/10}+1-2 \ge \ceil{n/5} \ge \floor{n'/4}$.

\textbf{Case 3.2: $\bm{G[S']=P_3}$.}
By symmetry, we assume that $w_2,w_3\in N(w_1)$.
We can no longer guarantee the sets $S_1,S_2,S_3$ as above, with $|S_i|\ge n/5$ for all $i\in [3]$.
In particular, $w_1$ could have $2$ common neighbors with one or both of $w_2$ and $w_3$.
First suppose that $w_1$ has at most one common neighbor with each of $w_2$ and $w_3$.  Note that $|S_1|\ge d(w_1)-4$.  
For each $w_1,w_2$-path $P$, remove from $S_2$ the neighbor of $w_2$ on $P$ (but remove nothing from $S_1$).
And for each $w_1,w_3$-path $P$, remove from $S_3$ the neighbor of $w_3$ on $P$ (but remove nothing from $S_1$).
As above, it is easy to check that $|S_i|\ge \ceil{n/5}$ for all $i$, so we can finish analogously.

Now assume instead, by symmetry, that $w_1$ has $2$ common neighbors with $w_2$.  In this case, we will ensure that
$|S_1|\ge \ceil{n/5}-2$, but that $|S_2|\ge \ceil{n/5}+1$ and $|S_3|\ge \ceil{n/5}+1$.  Because $w_2$ has $2$ common neighbors with $w_1$, there
do not exist any $w_2,w_3$-paths in $G-N[w_1]$, or we get a $K_{2,3}$-minor.  
	So, $|S_2| = d(w_2) - |N(w_2)\cap N[w_1]| \ge (\ceil{n/5} + 4) - 3 = \ceil{n/5}+1$.
	For each $w_1,w_3$-path $P$, we remove from $S_3$ the neighbor of $w_3$ on $P$.  So similarly, we have $|S_3| \ge d(v) - 3 \ge \ceil{n/5} + 1$.
	And recall that $|S_1| = d(w_1) - |N(w_1)\cap N[w_2]| - |N(w_1)\cap N[w_3]| \ge \ceil{n/5}+4 - 2(3) = \ceil{n/5}-2$.

Now for our independent set $I$ containing $w_1$, we take the smaller color class in each of $G[S_2]$ and $G[S_3]$, along with $w_1$.
This set has size at least $2\lfloor (\lceil n/5\rceil +1)/2\rfloor + 1\ge \lceil n/5\rceil$.  As usual, we let $G':=G-I$.
For an independent set $I'$ in $G'$ of size $\floor{n'/4} = \floor{(n+1)/5}$ containing $w_2$, we take 
$w_2$ along with the bigger color class in each of $G[S_1]$ and $G[S_3]$;
this set has size at least $\lceil(\lceil n/5\rceil + 1)/2\rceil + \lceil(\lceil n/5\rceil - 2)/2\rceil + 1 = 
\lceil(\lceil n/5\rceil + 1)/2\rceil + \lceil\lceil n/5\rceil/2\rceil = \lceil n/5\rceil + 1$.
The same argument works for $w_3$.
Finally, we consider $w_h$ with $h\ge 4$. Similar to above, we start with the bigger color class in each $G[S_i]$.  This set has at most $2$ neighbors
of $w_h$, so we remove them and add $w_h$.  
The resulting set has size at least $\ceil{|S_1|/2}+\ceil{|S_2|/2}+\ceil{|S_3|/2}-2+1\ge (\ceil{n/5}+1)+(\ceil{n/10}-1)-2+1 = \ceil{n/5}+\ceil{n/10}-1 \ge \ceil{n/5}\ge \floor{n'/4}$, as desired.

\textbf{Case 3.3: $\bm{G[S']=\overline{P_3}}$.}
By symmetry, assume that $w_2w_3\in E(G)$ and $w_1w_2,w_1w_3\notin E(G)$.  Note that at most $2$ neighbors of $w_1$ lie on paths from $w_1$ to 
$\{w_2,w_3\}$; otherwise, $G$ has a $K_{2,3}$-minor.  Thus, by removing all such neighbors from $S_1$, we can ensure that $|S_1|\ge d(w_1)-2\ge \ceil{n/5}+2$.
Likewise, since $w_1\notin N(w_2)\cup N(w_3)$, even after removing all neighbors on paths to the other vertices of $S'$, we can ensure that 
$|S_2|\ge \ceil{n/5}-1$ and $|S_3|\ge \ceil{n/5}-1$.  
But actually, we cannot have both of these bounds hold with equality.  
If each of $w_2$ and $w_3$ has both $2$ neighbors on paths to $w_1$ and $2$ neighbors on paths to the other of $w_2,w_3$,
then $G$ contains a $K_{2,3}$-minor.  Thus, we assume by symmetry that $|S_2|\ge \ceil{n/5}$ and $|S_3|\ge \ceil{n/5}-1$.
We construct an independent set $I$ consisting of the smaller color class in each of $G[S_2]$ and $G[S_3]$, along with $w_1$.
This set has size at least $\floor{(\ceil{n/5} -1)/2} + \floor{\ceil{n/5}/2} + 1 = (\ceil{n/5}-1)+1 = \ceil{n/5}$.  
Let $G':= G-I$.
In $G'$, we construct an independent set $I'$ consisting of the bigger color class in $G[S_1]$, the bigger color class in $G[S_3]$, 
along with $w_2$.  This set has size at least 
$\ceil{(\ceil{n/5} + 2)/2} + \ceil{(\ceil{n/5}-1)/2} + 1=
\ceil{\ceil{n/5}/2} + \ceil{(\ceil{n/5} +1)/2} + 1=
\ceil{n/5}+ 2 \ge \floor{n'/4}$. 
And the independent set for $w_3$ is similar.

Finally, we consider $w_h$ with $h\ge 4$.  Now we start with the independent set above in $G'$ containing $w_2$, remove $w_2$, add $w_h$, add the bigger color class in $G[S_2]$, and remove all neighbors of $w_h$.  Before removing neighbors of $w_h$, this set has size at least $(\ceil{n/5}+2)+\ceil{\ceil{n/5}/2}$. 
Clearly, $\ceil{\ceil{n/5}/2}\ge 1$.
So it suffices to show that $w_h$ has at most $3$ neighbors in this set; suppose not.  We now contract the smallest connected subgraph $J$ containing
all of $S'$ to a single vertex, and get a copy of $K_{2,3}$.  Note that $w_h\notin N(w_2)\cup N(w_3)$, by our choice of $w_1,w_2,w_3$ to minimize the 
order of $J$.  First suppose that $w_h$ has a neighbor in $S_2\cup S_3$.  This implies that $w_1$ must have a neighbor in $N(w_2)\cup N(w_3)$.
So $|V(J)|=4$.  Thus, at most one neighbor of $w_h$ in $S_1\cup S_2\cup S_3$ is contracted away; so $3$ remaining neighbors of $w_h$ give the part of 
size $3$ in the copy of $K_{2,3}$.  So assume instead that $w_h$ has no neighbor in $S_2\cup S_3$.  Thus, all neighbors of $w_h$ in $S_1\cup S_2\cup S_3$
lie in $S_1$.  But this must be at most $2$ such neighbors, since $G$ has no $K_{2,3}$-minor, a contradiction.

\textbf{Case 3.4: $\bm{G[S']=\overline{K_3}}$.}
This case is similar to the previous ones, but we need to prove stronger lower bounds on $|S_i|$ to allow that $w_h$ might have up to $4$ neighbors in
$S_1\cup S_2\cup S_3$.  
If, for every pair $w_i,w_j$ the subgraph $G-N[w_k]$ contains a $w_i,w_j$ path, then each $N(w_i)$ contains at most $2$ vertices on $w_i,\{w_j,w_k\}$-paths.
We remove all of these and get $|S_i|\ge \ceil{n/5}+2$ for all $i$.  So assume instead, by symmetry, that $G$ contains no such $w_1,w_2$-path. 
If $G$ contains no $w_1,w_2$-path of length at most $3$, then even after removing all neighbors of $w_i$ on paths to $\{w_j,w_k\}$, for each choice of $i,j,k$,
we have $|S_1|\ge \ceil{n/5}+2$, $|S_2|\ge \ceil{n/5}+2$, and $|S_3|\ge \ceil{n/5}$.  So assume instead that such a path $P$ exists.  If $P$ has length $2$,
then its interior vertex $x$ lies in $N(w_1)\cap N(w_2)\cap N(w_3)$.  Furthermore, every $w_1,w_2$-path of length at most $3$ contains $x$.  Now removing
from $S_i$ all neighbors of $w_i$ on $w_i,w_3$-paths, for each $i\in [2]$, we still have 
$|S_1|\ge \ceil{n/5}+2$, $|S_2|\ge \ceil{n/5}+2$, and $|S_3|\ge \ceil{n/5}$.
So we assume below that $N(w_1)\cap N(w_2)\cap N(w_3) = \emptyset$.

So assume instead that (no such path $w_1,w_2$-path of length $2$ exists and) $P$ has length $3$.
Denote $P$ by $w_1x_1x_2w_2$.  By symmetry, assume that $x_2\in N(w_3)$.  So $w_1$ has at most $2$ neighbors on $w_1,\{w_2,w_3\}$-paths.
If $G$ also contains a $w_1,w_3$-path disjoint from $N(w_2)$, then also $w_2$ has at most $2$ neighbors on paths to $\{w_1,w_3\}$,
so $|S_1|\ge \ceil{n/5}+2$, $|S_2|\ge \ceil{n/5}+2$, and $|S_3|\ge \ceil{n/5}$.  So assume that $G$ contains no such path.
We will show that every $w_3,w_1$-path contains $x_2$.  Suppose not.  Consider a $w_3,w_1$-path $P$ of length $3$, and let $x_3$
be an internal vertex of $P$ such that $x_3\ne x_2$ and $x_3\in N(w_2)$.  Now $G$ has a $K_{2,3}$-minor with $\{x_2,x_3\}$ as one part and 
with $\{w_1,w_2,w_3\}$ as the other part.  Thus, in every case we get $|S_1|\ge \ceil{n/5}+2$, $|S_2|\ge \ceil{n/5}+2$, and $|S_3|\ge \ceil{n/5}$.
For our independent set $I$, we take $w_1$, along with the smaller color class in $G[S_2]$ and the smaller color class in $G[S_3]$.
This gives an independent set of size bigger than $\ceil{n/5}$, and we take $I$ to be an arbitrary subset of size $\floor{n/5}$.  
Let $G':=G-I$.  The analysis is straightforward for independent sets in $G'$ of size $\floor{n'/4}\le \ceil{n/5}$
that contain $w_2$ or contain $w_3$.

Finally, we consider $w_h$ with $h\ge 4$.  We start with the independent set containing the bigger color class in each of $G[S_1]$, $G[S_2]$, and $G[S_3]$.
Now we add $w_h$ and remove all neighbors of $w_h$.  The size of this set before removing neighbors of $w_h$ is at least
$2\ceil{(\ceil{n/5}+2)/2}+\ceil{\ceil{n/5}/2}+1\ge \ceil{n/5}+\ceil{n/10}+3\ge \ceil{n/5}+4$.  So it suffices to show that $w_h$ has at most $4$ neighbors
in the set.  If $w_h$ has more than $4$ neighbors in $S_1\cup S_2\cup S_3$, then it has neighbors in each $S_i$ (since it has at most $2$ in each).
This implies that $|V(J)|\le5$; otherwise, we could get a smaller $J$ by replacing some $w_i$ with $w_h$.  Now we contract $J$ to a single vertex.
Doing so contracts away at most $2$ neighbors of $w_h$.  Thus, the remaining at least $3$ neighbors of $w_h$ in $S_1\cup S_2\cup S_3$ give rise to a 
copy of $K_{2,3}$, a contradiction.
\end{proof}

\end{document}